\documentclass[a4paper,12pt]{article}

\usepackage{amsmath,amsfonts,amssymb}
\usepackage{amsthm}
\usepackage{color,multicol}

\newtheorem{lemma}{Lemma}[section]
\newtheorem{theorem}[lemma]{Theorem}

\newtheorem{remark}[lemma]{Remark}

\parindent=0pt

\begin{document}

\title{On negative eigenvalues of two-dimensional Schr\"odinger operators}

\author{Eugene Shargorodsky\footnote{E-mail: \ eugene.shargorodsky@kcl.ac.uk} \\
Department of Mathematics, King's College London,\\
Strand, London WC2R 2LS, UK}

\date{}

\maketitle

\begin{abstract}
The paper presents estimates for the number of negative eigenvalues of a two-dimensional 
Schr\"odinger operator in terms of $L\log L$ type Orlicz norms of the potential and proves a
conjecture by N.N. Khuri, A. Martin and T.T. Wu.
\end{abstract}

\section{Introduction}

According to the celebrated Cwikel-Lieb-Rozenblum inequality, the number $N_- (V)$
of negative eigenvalues of the Schr\"odinger operator 
$-\Delta - V$, $V \ge 0$ on $L_2(\mathbb{R}^d)$, $d \ge 3$ is estimated above by
$$
\mbox{const}\, \int_{\mathbb{R}^d} V(x)^{d/2} dx .
$$ 
It is well known that this estimate does not hold for $d = 2$. In this case, the Schr\"odinger operator 
has at least one negative eigenvalue for any nonzero $V \ge 0$, and no estimate of the type 
$$
N_- (V) \le \mbox{const} + \int_{\mathbb{R}^2} V(x)  W(x)\, dx
$$
can hold, provided the weight function $W$ is bounded in a neighborhood of at least 
one point (see \cite{Grig}). On the other hand,
$$
N_- (V) \ge  \mbox{const}\, \int_{\mathbb{R}^2} V(x)\, dx 
$$
(see \cite{GNY}). Upper estimates for $N_- (V)$ in the case $d = 2$ can be found in
\cite{BL, BS, CKMW, Grig, KMW, LS0, LS, MV, MV1, Sol, Sol1, Sto} and in the references therein. 
Following the pioneering paper \cite{Sol}, we obtain upper estimates involving 
$L\log L$ type Orlicz norms of the potential (Theorems \ref{clCLRth}, \ref{GrigTalk} anf 
\ref{LaptNetrSol}) 
and prove (Theorem \ref{KMWth})  a 
conjecture by N.N. Khuri, A. Martin and T.T. Wu (\cite{KMW}). 
In fact, we show that the main estimate in \cite{Sol} is actually stronger than the one
conjectured in \cite{KMW}, while none of the later results in \cite{BL, Grig, LN, LS, MV, MV1, Sto} 
implies the Khuri-Martin-Wu inequality (see Section \ref{Compar} below).   
Our approach does not
seem to be sufficient to settle the stronger conjecture by K. Chadan,
N.N. Khuri, A. Martin and T.T. Wu (\cite{CKMW}; see \eqref{CKMWest}, \eqref{eqs}).

We discuss several upper estimates for $N_- (V)$ in the paper and show in Section \ref{Compar}
how they are related to each other (see the diagram close to the end of Section \ref{Compar}). 
All of them involve terms of two types: integrals of $V$ with a logarithmic weight and
$L\log L$ type (or $L_p$, $p > 1$) norms of $V$. It turns out that the finiteness of the term of the first
type suggested in \cite{Sol} is necessary for 
$N_- (\alpha V) = O\left(\alpha\right)$  as  $\alpha \to +\infty$ to hold (see Theorem \ref{lognecesnew}
below). Unfortunately this is not true
for the terms of the second type.
None of the estimates discussed in the paper is sharp in the sense that $N_- (V)$ has to be infinite if the
right-hand side is infinite. On the other hand, both the
logarithmic weight and the $L\log L$ norm are in a sense optimal if one tries to estimate
$N_- (V)$ in terms of weighted integrals of $V$ and of its Orlicz norms 
(see \cite[Section 4]{Sol} and Section \ref{Concl} below). 
It is probably difficult to obtain an estimate for $N_- (V)$ that is sharp in the above
sense. Indeed, there
are potentials $V \ge 0$ such that $N_- (\alpha V) < \infty$ for $\alpha < 1$ and
$N_- (\alpha V) = \infty$ for $\alpha > 1$ (this is true in the multidimensional case $d \ge 3$
as well). For such potentials, $N_- (V)$
may be finite or infinite, and in the latter case, 
 $N_- (\alpha V)$ may grow arbitrarily fast or arbitrarily slow as $\alpha \to 1 - 0$ (see
 Theorem \ref{alpha1} and Appendix B).

\section{Notation and auxiliary results}\label{App} 

We need some notation from the theory of Orlicz  spaces (see \cite{KR, RR}).
Let $(\Omega, \Sigma, \mu)$ be a measure space, let $\Phi$ and
$\Psi$ be mutually complementary $N$-functions, and let $L_\Phi(\Omega)$, 
$L_\Psi(\Omega)$ be the corresponding Orlicz spaces. (These spaces are denoted by 
$L^*_\Phi(\Omega)$,  $L^*_\Psi(\Omega)$ in \cite{KR}, where $\Omega$ is assumed to be a closed
bounded subset of $\mathbb{R}^d$
equipped with the standard Lebesgue measure.) We will use the following
norms on $L_\Psi(\Omega)$ 
\begin{equation}\label{Orlicz}
\|f\|_{\Psi} = \|f\|_{\Psi, \Omega} = \sup\left\{\left|\int_\Omega f g d\mu\right| : \ 
\int_\Omega \Phi(g) d\mu \le 1\right\}
\end{equation}
and
\begin{equation}\label{Luxemburg}
\|f\|_{(\Psi)} = \|f\|_{(\Psi, \Omega)} = \inf\left\{\kappa > 0 : \ 
\int_\Omega \Psi\left(\frac{f}{\kappa}\right) d\mu \le 1\right\} .
\end{equation}
These two norms are equivalent
\begin{equation}\label{Luxemburgequiv}
\|f\|_{(\Psi)} \le \|f\|_{\Psi} \le 2 \|f\|_{(\Psi)}\, , \ \ \ \forall f \in L_\Psi(\Omega) .
\end{equation}
Note that
\begin{equation}\label{LuxNormImpl}
\int_\Omega \Psi\left(\frac{f}{\kappa_0}\right) d\mu \le C_0, \ \ C_0 \ge 1  \ \ \Longrightarrow \ \
\|f\|_{(\Psi)} \le C_0 \kappa_0 . 
\end{equation}
Indeed, since $\Psi$ is even, convex and increasing on 
$[0, +\infty)$, and $\Psi(0) = 0$, we get for any $\kappa \ge C_0 \kappa_0$,
\begin{equation}\label{LuxProof}
\int_{\Omega} \Psi\left(\frac{f}{\kappa}\right) d\mu \le
\int_{\Omega} \Psi\left(\frac{f}{C_0 \kappa_0}\right) d\mu \le
\frac{1}{C_0} \int_{\Omega} \Psi\left(\frac{f}{\kappa_0}\right) d\mu \le 1 .
\end{equation}
It follows from \eqref{LuxNormImpl} with $\kappa_0 = 1$ that 
\begin{equation}\label{LuxNormPre}
\|f\|_{(\Psi)} \le \max\left\{1, \int_{\Omega} \Psi(f) d\mu\right\} .  
\end{equation}

We will need the following
equivalent norm on $L_\Psi(\Omega)$ with $\mu(\Omega) < \infty$ which was introduced in
\cite{Sol}:
\begin{equation}\label{OrlAverage}
\|f\|^{\rm (av)}_{\Psi} = \|f\|^{\rm (av)}_{\Psi, \Omega} = \sup\left\{\left|\int_\Omega f g d\mu\right| : \ 
\int_\Omega \Phi(g) d\mu \le \mu(\Omega)\right\} .
\end{equation}

\begin{lemma}\label{avequiv}
$$
\min\{1, \mu(\Omega)\}\, \|f\|_{\Psi, \Omega} \le \|f\|^{\rm (av)}_{\Psi, \Omega}
\le \max\{1, \mu(\Omega)\}\, \|f\|_{\Psi, \Omega}
$$
\end{lemma}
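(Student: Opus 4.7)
The plan is to prove both inequalities by rescaling the test function $g$ between the two supremum definitions and exploiting the fact that $\Phi$ is convex with $\Phi(0)=0$, which yields the one-sided homogeneity estimate $\Phi(tg) \le t\Phi(g)$ for $0 \le t \le 1$ and $\Phi(g) \le t^{-1} \Phi(g/t) \cdot t = \ldots$ — more precisely, $\int \Phi(g/c)\,d\mu \le c^{-1}\int \Phi(g)\,d\mu$ whenever $c\ge 1$, and $\int \Phi(cg)\,d\mu \le c\int \Phi(g)\,d\mu$ whenever $0\le c\le 1$.

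I would split into the two cases $\mu(\Omega) \ge 1$ and $\mu(\Omega) < 1$ (so that the $\min$ and $\max$ unfold unambiguously). In the case $\mu(\Omega) \ge 1$, the admissible set for $\|f\|^{\rm (av)}_\Psi$ contains that for $\|f\|_\Psi$, giving immediately $\|f\|_\Psi \le \|f\|^{\rm (av)}_\Psi$, which matches $\min\{1,\mu(\Omega)\}=1$. For the upper bound in this case, given $g$ with $\int \Phi(g)\,d\mu \le \mu(\Omega)$ I set $h = g/\mu(\Omega)$; convexity plus $\Phi(0)=0$ yields $\int \Phi(h)\,d\mu \le \mu(\Omega)^{-1}\int \Phi(g)\,d\mu \le 1$, so $h$ is admissible for $\|f\|_\Psi$, and consequently $|\int f g\,d\mu| = \mu(\Omega)|\int f h\,d\mu| \le \mu(\Omega)\|f\|_\Psi$. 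Taking the supremum over $g$ delivers $\|f\|^{\rm (av)}_\Psi \le \mu(\Omega)\|f\|_\Psi = \max\{1,\mu(\Omega)\}\|f\|_\Psi$.

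In the case $\mu(\Omega) < 1$, the inclusion of admissible sets runs the other way, giving $\|f\|^{\rm (av)}_\Psi \le \|f\|_\Psi = \max\{1,\mu(\Omega)\}\|f\|_\Psi$ for free. For the lower bound, given $g$ with $\int \Phi(g)\,d\mu \le 1$ I set $h = \mu(\Omega) g$; the inequality $\Phi(\mu(\Omega)g) \le \mu(\Omega)\Phi(g)$ (by convexity and $\Phi(0)=0$) gives $\int \Phi(h)\,d\mu \le \mu(\Omega)$, so $h$ is admissible for $\|f\|^{\rm (av)}_\Psi$, and $\mu(\Omega)|\int f g\,d\mu| = |\int f h\,d\mu| \le \|f\|^{\rm (av)}_\Psi$. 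Taking the supremum gives $\mu(\Omega)\|f\|_\Psi \le \|f\|^{\rm (av)}_\Psi = \min\{1,\mu(\Omega)\}\|f\|_\Psi$ rearranged correctly.

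There is no serious obstacle: the only mildly nontrivial input is the convexity inequality for $\Phi$, which is built into the definition of an $N$-function, and the whole argument is essentially the observation made in the derivation of \eqref{LuxProof} applied symmetrically to both directions of scaling. The only thing to be careful about is keeping the direction of the inequality $\Phi(tg) \le t\Phi(g)$ correct depending on whether $t\le 1$ or $t\ge 1$, which is precisely what forces the split into the two cases $\mu(\Omega)\lessgtr 1$.
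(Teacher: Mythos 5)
Your proposal is correct and follows essentially the same route as the paper's proof: split into the cases $\mu(\Omega)\ge 1$ and $\mu(\Omega)<1$, use the inclusion of the admissible sets for the easy direction, and rescale the test function $g$ by $\mu(\Omega)$ (or $1/\mu(\Omega)$), using convexity of $\Phi$ with $\Phi(0)=0$, for the other direction. No gaps.
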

\begin{proof}
Let
$$
\mathbb{B}_1 := \left\{g : \ \int_\Omega \Phi(g) d\mu \le 1\right\} , \ \ \ 
\mathbb{B}_\Omega := \left\{g : \ \int_\Omega \Phi(g) d\mu \le \mu(\Omega)\right\} .
$$
Suppose $\mu(\Omega) \ge 1$. Then, clearly, 
$\|f\|_{\Psi, \Omega} \le \|f\|^{\rm (av)}_{\Psi, \Omega}$.  It is easy to see that
$$
g \in \mathbb{B}_\Omega \ \Longrightarrow \ \frac1{\mu(\Omega)}\, g \in \mathbb{B}_1 
$$
(cf. \eqref{LuxProof}). Hence 
$$
\|f\|^{\rm (av)}_{\Psi, \Omega} = \sup_{g \in \mathbb{B}_\Omega}
\left|\int_\Omega f g d\mu\right| \le \sup_{h \in \mathbb{B}_1}
\left|\int_\Omega f \cdot\big(\mu(\Omega) h\big) d\mu\right| = \mu(\Omega)
\|f\|_{\Psi, \Omega} .
$$

Suppose now $\mu(\Omega) < 1$. Then 
$\|f\|^{\rm (av)}_{\Psi, \Omega} \le \|f\|_{\Psi, \Omega}$ and 
$$
g \in \mathbb{B}_1 \ \Longrightarrow \ \mu(\Omega)\, g \in \mathbb{B}_\Omega .
$$
Hence,
$$
\mu(\Omega) \|f\|_{\Psi, \Omega} = \mu(\Omega) \sup_{g \in \mathbb{B}_1}
\left|\int_\Omega f g d\mu\right| \le \sup_{h \in \mathbb{B}_\Omega}
\left|\int_\Omega f  h d\mu\right| = \|f\|^{\rm (av)}_{\Psi, \Omega} .
$$
\end{proof}

We will need the following pair of mutually complementary $N$-fuctions
\begin{equation}\label{thepair}
\mathcal{A}(s) = e^{|s|} - 1 - |s| , \ \ \ \mathcal{B}(s) = (1 + |s|) \ln(1 + |s|) - |s| , \ \ \ s \in \mathbb{R} .
\end{equation}
We will use the following standard notation
\begin{equation}\label{a+}
a_+ := \max\{0, a\}, \ \ \ a \in \mathbb{R} .
\end{equation}

\begin{lemma}\label{elem}
$\frac12\, s\ln_+ s \le \mathcal{B}(s) \le s + 2s\ln_+ s$, \ $\forall s \ge 0$.
\end{lemma}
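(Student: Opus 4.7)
The plan is to base both bounds on the representation
\[
\mathcal{B}(s) \;=\; \int_0^s \ln(1+t)\, dt ,
\]
which is immediate from $\mathcal{B}(0) = 0$ and $\mathcal{B}'(s) = \ln(1+s)$, and to split the analysis at $s = 1$, the threshold at which $\ln_+$ activates.

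For the upper bound, on $[0,1]$ the inequality $\ln(1+t) \le t$ integrates to $\mathcal{B}(s) \le s^2/2 \le s$, which matches the right-hand side since $\ln_+ s = 0$ there. For $s \ge 1$ I would use $\ln(1+s) \le 1 + \ln s$ (which follows from $1 + s \le 2s$ and $\ln 2 < 1$) to write
\[
\mathcal{B}(s) \;=\; (1+s)\ln(1+s) - s \;\le\; (1+s)(1+\ln s) - s \;=\; 1 + \ln s + s\ln s ,
\]
and then invoke the elementary bound $1 + \ln s \le s$ to arrive at $\mathcal{B}(s) \le s + s\ln s \le s + 2s \ln_+ s$.

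For the lower bound, the case $s \in [0,1]$ is trivial: $\ln_+ s = 0$ and $\mathcal{B} \ge 0$ because $\mathcal{B}(0) = 0$ and $\mathcal{B}' = \ln(1+\cdot) \ge 0$. The substance is in $s \ge 1$, where I would set $F(s) := \mathcal{B}(s) - \tfrac12 s \ln s$ and show $F \ge 0$ by differentiating,
\[
F'(s) \;=\; \ln(1+s) - \tfrac12 \ln s - \tfrac12 \;=\; \ln\!\left(\frac{1+s}{\sqrt{s}}\right) - \tfrac12 ,
\]
and observing that AM--GM gives $(1+s)/\sqrt{s} = \sqrt{s} + 1/\sqrt{s} \ge 2$, hence $F'(s) \ge \ln 2 - \tfrac12 > 0$. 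Combined with the explicit value $F(1) = 2\ln 2 - 1 > 0$, monotonicity yields $F(s) \ge F(1) > 0$ throughout $[1,\infty)$.

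The only real subtlety is calibrating the lower bound near $s = 1$: the naive estimate $(1+s)\ln(1+s) \ge s\ln s$ produces only $\mathcal{B}(s) \ge s\ln s - s$, which dips below $\tfrac12 s \ln s$ on the intermediate range $1 \le s < e^2$. Exploiting the AM--GM slack in $F'$ is precisely what is needed to bridge this range, and this is the step where I expect a reader to want to see the details.
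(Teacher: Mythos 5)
Your proof is correct and follows essentially the same route as the paper: both split at $s=1$, use $\ln(1+t)\le t$ on $[0,1]$ for the upper bound, and for the lower bound reduce to the derivative inequality $\tfrac12\tfrac{d}{ds}(s\ln s)\le \mathcal{B}'(s)$ on $[1,\infty)$ together with $\mathcal{B}(1)>0$. Your monotonicity argument for $F(s)=\mathcal{B}(s)-\tfrac12 s\ln s$ is the paper's integration of $1+\ln t\le 2\ln(1+t)$ over $[1,s]$ written differentially, with AM--GM replacing the paper's elementary bound $et<1+3t\le(1+t)^2$.
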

\begin{proof}
If $s \ge 1$, then integrating the inequality 
$$
1 + \ln t = \ln (e t) < \ln(1 + 3t) \le 2\ln(1 + t) ,  \ \ \ t \ge 1, 
$$
over the interval $[1, s]$,
one gets $s \ln s \le 2(\mathcal{B}(s) - 2\ln 2 + 1) < 2\mathcal{B}(s)$. 
Hence $s \ln_+ s \le 2\mathcal{B}(s)$, $\forall s \ge 0$.

If $s \ge 1$, then 
\begin{eqnarray*}
\mathcal{B}(s) = (1 + s) \ln(1 + s) - s \le 2s \ln(2s) - s = (2\ln 2 - 1)s + 2s\ln s \\
< s + 2s\ln s .
\end{eqnarray*}
If $s \in [0, 1)$, then integrating the inequality $\ln(1 + t) \le t$ over the interval $[0, s]$, one gets
$$
\mathcal{B}(s) = (1 + s) \ln(1 + s) - s \le \frac12\, s^2 \le \frac12\, s \le s .
$$
\end{proof}

\begin{lemma}\label{ABelem}
$\mathcal{B}(s) \le \frac12\, s^2$, \ $\frac12\, s^2 \le \mathcal{A}(s) \le \frac{e}2\, s^2$, \ 
$\forall s \in [0, 1]$.
\end{lemma}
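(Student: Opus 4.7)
The plan is to treat the three inequalities separately, using only Taylor expansion and elementary calculus. None of the steps is genuinely difficult, so there is no serious obstacle; the main task is just to be careful about the exponents.

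First I would handle $\mathcal{B}(s) \le \frac12 s^2$. Differentiating the definition gives $\mathcal{B}'(s) = \ln(1+s)$ for $s \ge 0$, and the inequality $\ln(1+t) \le t$ for $t \ge 0$ (already used in the proof of Lemma \ref{elem}) integrated over $[0,s]$ yields $\mathcal{B}(s) = \int_0^s \ln(1+t)\,dt \le \int_0^s t\,dt = \frac12 s^2$. This in fact holds for every $s \ge 0$, not just $s \in [0,1]$.

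Next I would turn to the bounds on $\mathcal{A}$. Expanding as a power series,
\begin{equation*}
\mathcal{A}(s) = e^s - 1 - s = \sum_{k=2}^\infty \frac{s^k}{k!} , \qquad s \ge 0.
\end{equation*}
All terms are nonnegative, so the $k=2$ term already gives $\mathcal{A}(s) \ge \frac12 s^2$ for every $s \ge 0$. For the upper bound I would use that $k(k-1) \ge 2$ for $k \ge 2$, so $\frac{2}{k!} \le \frac{1}{(k-2)!}$, and therefore
\begin{equation*}
\mathcal{A}(s) = \frac{s^2}{2} \sum_{k=2}^\infty \frac{2\, s^{k-2}}{k!} \le \frac{s^2}{2} \sum_{k=2}^\infty \frac{s^{k-2}}{(k-2)!} = \frac{s^2}{2}\, e^s.
\end{equation*}
Restricting to $s \in [0,1]$ gives $e^s \le e$, hence $\mathcal{A}(s) \le \frac{e}{2} s^2$, which completes the proof.
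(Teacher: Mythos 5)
Your proof is correct. The bound $\mathcal{B}(s)\le\frac12 s^2$ is obtained exactly as in the paper (it is the computation at the end of the proof of Lemma \ref{elem}: integrate $\ln(1+t)\le t$ over $[0,s]$), and it indeed holds for all $s\ge 0$, not just on $[0,1]$. For $\mathcal{A}$ you deviate slightly from the paper: the paper integrates the inequality $1\le e^s\le e$, $s\in[0,1]$, twice, which gives $\frac12 s^2\le \mathcal{A}(s)\le\frac e2 s^2$ in two lines, while you expand $\mathcal{A}(s)=\sum_{k\ge2}s^k/k!$ and compare termwise, using $2(k-2)!\le k!$ to get the intermediate bound $\mathcal{A}(s)\le\frac{s^2}{2}e^s$ for all $s\ge0$ before restricting to $[0,1]$. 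Both arguments are elementary and equally rigorous; the series route has the small advantage of yielding the global inequality $\frac12 s^2\le\mathcal{A}(s)\le\frac{s^2}{2}e^s$ on $[0,\infty)$, whereas the paper's double integration is marginally shorter and stays entirely within the "integrate an obvious pointwise inequality" pattern used throughout Section \ref{App}.
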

\begin{proof}
The first inequality was proved at the end of the proof of Lemma \ref{elem}, the second one
is obtained by integrating the inequality $1 \le e^s \le e$, \ $s \in [0, 1]$ twice.
\end{proof}

\begin{lemma}\label{Aelem}
$e^s \le 2\mathcal{A}(s) + \frac32$, \ $\forall s \ge 0$.
\end{lemma}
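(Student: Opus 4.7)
The plan is to reduce the claimed inequality to elementary single-variable calculus. For $s \ge 0$ we have $\mathcal{A}(s) = e^s - 1 - s$, so the inequality $e^s \le 2\mathcal{A}(s) + \frac32$ is equivalent to
$$
f(s) := e^s - 2s - \tfrac12 \ge 0, \qquad s \ge 0.
$$
Thus the entire proof boils down to showing $f \ge 0$ on $[0,\infty)$.

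First I would check the value at the endpoint: $f(0) = 1 - 0 - \frac12 = \frac12 > 0$. Next I would differentiate, $f'(s) = e^s - 2$, which is negative on $[0, \ln 2)$ and positive on $(\ln 2, \infty)$. Hence $f$ has a unique minimum on $[0,\infty)$ at $s_* = \ln 2$, with value
$$
f(\ln 2) = 2 - 2\ln 2 - \tfrac12 = \tfrac32 - 2\ln 2.
$$
It remains to verify $\frac32 > 2\ln 2$, i.e.\ $e^{3/4} > 2$. This follows from the power series lower bound $e^{3/4} \ge 1 + \frac34 + \frac12\left(\frac34\right)^2 + \frac16\left(\frac34\right)^3 > 2$, completing the argument.

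There is essentially no conceptual obstacle here; the only point requiring any care is the numerical verification that the critical value $\frac32 - 2\ln 2$ is indeed positive, which is why the constant $\frac32$ (rather than something smaller) appears on the right-hand side.
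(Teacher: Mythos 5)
Your proof is correct: the reduction to $f(s):=e^{s}-2s-\tfrac12\ge 0$ is exactly equivalent to the claim, the minimization is sound, and the numerical check $\ln 2<\tfrac34$ via the truncated series for $e^{3/4}$ is valid since all omitted terms are positive. The paper arrives at the same underlying inequality by a slightly different, calculus-free route: it notes $s\le\tfrac12+\tfrac{s^{2}}{2}$ (i.e.\ $(s-1)^{2}\ge 0$) and $\tfrac{s^{2}}{2}\le\mathcal{A}(s)$ (the Taylor-series lower bound, cf.\ Lemma \ref{ABelem}), so that $s\le\tfrac12+\mathcal{A}(s)$, and then simply writes $e^{s}=\mathcal{A}(s)+1+s\le 2\mathcal{A}(s)+\tfrac32$. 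The two arguments prove the same inequality $e^{s}\ge 2s+\tfrac12$; the paper's version avoids any numerical verification and reuses a bound already present in the surrounding lemmas, whereas yours trades that for a one-line optimization plus the estimate $e^{3/4}>2$. Either is perfectly adequate here.
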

\begin{proof}
$$
s \le \frac12 + \frac{s^2}2 \le \frac12 + \mathcal{A}(s) .
$$
Hence 
$$
e^s = \mathcal{A}(s) + 1 + s \le 2\mathcal{A}(s) + \frac32, \ \  \forall s \ge 0 .
$$
\end{proof}

\begin{lemma}\label{avequivB}
Let $\mu(\Omega) > 1$. Then
$$
\|f\|^{\rm (av)}_{\mathcal{B}, \Omega}
\le \|f\|_{\mathcal{B}, \Omega} + \ln\left(\frac72\, \mu(\Omega)\right)\, \|f\|_{L_1(\Omega, \mu)} .
$$
\end{lemma}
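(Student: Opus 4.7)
The plan is to compare the test functions for $\|f\|^{\rm (av)}_{\mathcal{B},\Omega}$ (constraint $\int_\Omega \mathcal{A}(g)\,d\mu \le \mu(\Omega)$) with the test functions for $\|f\|_{\mathcal{B},\Omega}$ (constraint $\int_\Omega \mathcal{A}(g)\,d\mu \le 1$) by a truncation-at-height-$c$ splitting. Given any admissible $g$ for the average norm, I set
$$
g_1 = (|g|-c)_+\,\mathrm{sgn}(g),\qquad g_2 = g - g_1,
$$
so that $|g_2|\le c$ pointwise and $|g_1| = (|g|-c)_+$. The level $c$ will be chosen as $c=\ln\!\big(\tfrac{7}{2}\mu(\Omega)\big)$, which is positive because $\mu(\Omega)>1$.

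Next I estimate the two pieces separately. For the bounded piece, $\bigl|\int f g_2\,d\mu\bigr| \le c\,\|f\|_{L_1}$ directly. For the other piece, the key is to show $\int_\Omega \mathcal{A}(g_1)\,d\mu \le 1$, which then gives $\bigl|\int f g_1\,d\mu\bigr| \le \|f\|_{\mathcal{B},\Omega}$ by definition of the norm. On the set $\{|g|<c\}$ we have $g_1=0$ and there is nothing to estimate. On $\{|g|\ge c\}$ I write
$$
\mathcal{A}(g_1) = e^{|g|-c} - 1 - (|g|-c) \le e^{-c}\,e^{|g|} - 1 - (|g|-c),
$$
and apply Lemma \ref{Aelem} in the form $e^{|g|}\le 2\mathcal{A}(g)+\tfrac32$ to bound $e^{-c}e^{|g|} \le e^{-c}(2\mathcal{A}(g)+\tfrac32)$. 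Integrating, discarding the nonpositive terms $-\mu(\{|g|\ge c\})$ and $-\int(|g|-c)_+\,d\mu$, and using $\int_\Omega \mathcal{A}(g)\,d\mu\le\mu(\Omega)$, I obtain
$$
\int_\Omega \mathcal{A}(g_1)\,d\mu \le e^{-c}\bigl(2\mu(\Omega) + \tfrac32\mu(\Omega)\bigr) = \tfrac72\,e^{-c}\mu(\Omega).
$$
With the chosen $c$ this is exactly $1$.

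Combining the two bounds gives $\bigl|\int f g\,d\mu\bigr| \le \|f\|_{\mathcal{B},\Omega} + \ln\!\big(\tfrac{7}{2}\mu(\Omega)\big)\,\|f\|_{L_1}$, and taking the supremum over $g$ yields the lemma. The only slightly delicate step is picking $c$ so that the constants line up: the factor $\tfrac72 = 2+\tfrac32$ comes straight from Lemma \ref{Aelem}, so the threshold is forced, and it is precisely the hypothesis $\mu(\Omega)>1$ that guarantees $c>0$ so that the truncation $|g_2|\le c$ is meaningful.
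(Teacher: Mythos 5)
Your proof is correct and follows essentially the same route as the paper: truncate the test function at the level $c=\ln\left(\frac72\,\mu(\Omega)\right)$, bound the excess part using $\mathcal{A}(s)\le e^{s}$ together with Lemma \ref{Aelem} so that it becomes admissible for $\|f\|_{\mathcal{B},\Omega}$, and estimate the bounded part against $c\,\|f\|_{L_1(\Omega,\mu)}$. The only cosmetic difference is that you handle signed $g$ directly via $\mathrm{sgn}(g)$, whereas the paper first reduces to $f\ge 0$ and nonnegative test functions.
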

\begin{proof}
Since $\|f\|^{\rm (av)}_{\mathcal{B}, \Omega} = \||f|\|^{\rm (av)}_{\mathcal{B}, \Omega}$ and
$\|f\|_{\mathcal{B}, \Omega} = \||f|\|_{\mathcal{B}, \Omega}$, we can assume without loss of generality 
that $f \ge 0$. In this case, 
\begin{eqnarray*}
&& \|f\|_{\mathcal{B}, \Omega} = \sup\left\{\int_\Omega f g\, d\mu : \ g \ge 0 , \
\int_\Omega \mathcal{A}(g) d\mu \le 1\right\} , \\
&& \|f\|^{\rm (av)}_{\mathcal{B}, \Omega} = \sup\left\{\int_\Omega f g\, d\mu : \ g \ge 0 , \
\int_\Omega \mathcal{A}(g) d\mu \le \mu(\Omega)\right\} .
\end{eqnarray*}
Take an arbitrary $g \ge 0$ with $\int_\Omega \mathcal{A}(g) d\mu \le \mu(\Omega)$ and set
\begin{eqnarray*}
& \chi(x) := \left\{\begin{array}{l}
  1 \ \mbox{ if } g(x) \le  \ln\left(\frac72\, \mu(\Omega)\right),   \\ \\
   0 \  \mbox{ if } g(x) >  \ln\left(\frac72\, \mu(\Omega)\right),  
\end{array}\right. \\
& g_1 := g\chi + \ln\left(\frac72\, \mu(\Omega)\right)\, (1 - \chi) , \ \ g_2 := g - g_1 
= \big(g - \ln\left(\frac72\, \mu(\Omega)\right)\big) (1 - \chi) . 
\end{eqnarray*}
Then $g = g_1 + g_2$, $0 \le g_1 \le \ln\left(\frac72\, \mu(\Omega)\right)$, 
and it follows from Lemma \ref{Aelem} that
\begin{eqnarray*}
&& \int_\Omega \mathcal{A}(g_2) d\mu = 
\int_{g(x) >  \ln\left(\frac72\, \mu(\Omega)\right)} 
\mathcal{A}\left(g(x) - \ln\left(\frac72\, \mu(\Omega)\right)\right) d\mu(x)\\
&& \le \int_{g(x) >  \ln\left(\frac72\, \mu(\Omega)\right)} 
e^{g(x) - \ln\left(\frac72\, \mu(\Omega)\right)} d\mu(x) \le
\frac{2}{7 \mu(\Omega)} \int_\Omega e^g d\mu \\
&& \le \frac{2}{7 \mu(\Omega)} \int_\Omega \left(2\mathcal{A}(g) + \frac32\right) d\mu \le
\frac{2}{7 \mu(\Omega)} \left(2\mu(\Omega) + \frac32\, \mu(\Omega)\right) = 1 .
\end{eqnarray*}
Hence
\begin{eqnarray*}
\int_\Omega f g\, d\mu = \int_\Omega f g_1\, d\mu + \int_\Omega f g_2\, d\mu 
\le \ln\left(\frac72\, \mu(\Omega)\right) \int_\Omega f\, d\mu +  \|f\|_{\mathcal{B}, \Omega} \\
= \|f\|_{\mathcal{B}, \Omega} + \ln\left(\frac72\, \mu(\Omega)\right)\, \|f\|_{L_1(\Omega, \mu)} .
\end{eqnarray*}
\end{proof}

\section{A Solomyak type estimate}

Let $\mathcal{H}$ be a Hilbert space and let $\mathbf{q}$ be a Hermitian form with a domain
$\mbox{Dom}\, (\mathbf{q}) \subseteq \mathcal{H}$. Set
\begin{equation}\label{N-}
N_- (\mathbf{q}) := \sup\left\{\dim \mathcal{L}\, | \  \mathbf{q}[u] < 0, \ 
\forall u \in \mathcal{L}\setminus\{0\}\right\} ,
\end{equation}
where $\mathcal{L}$ denotes a linear subspace of $\mbox{Dom}\, (\mathbf{q})$. If $\mathbf{q}$
is the quadratic form of a self-adjoint operator $A$ with no essential spectrum in $(-\infty, 0)$, then
by the variational principle,
$N_- (\mathbf{q})$ is the number of negative eigenvalues of $A$ repeated according to their
multiplicity (see, e.g., \cite[S1.3]{BerShu} or \cite[Theorem 10.2.3]{BirSol}).

Let $V \ge 0$ be locally integrable on $\mathbb{R}^2$. Consider
\begin{eqnarray*}
\mathcal{E}_V[w] := \int_{\mathbb{R}^2} |\nabla w(x)|^2 dx - 
\int_{\mathbb{R}^2} V(x) |w(x)|^2 dx ,  \\  
\mbox{Dom}\, (\mathcal{E}_V) :=
W^1_2\left(\mathbb{R}^2\right)\cap L_2\left(\mathbb{R}^2, V(x)dx\right) .  
\end{eqnarray*}

\begin{theorem}\label{clCLRth}
There exists a constant $C > 0$ such that 
\begin{equation}\label{clCLRest}
N_- (\mathcal{E}_V) \le C  \left(\|V\|_{\mathcal{B}, \mathbb{R}^2} 
+ \int_{\mathbb{R}^2} V(x) \ln(1 + |x|)\, dx\right)  + 1 , \ \ \ 
\forall V\ge 0 .
\end{equation}
\end{theorem}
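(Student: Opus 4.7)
The plan is to combine an angular-mode decomposition of trial functions with a dyadic radial decomposition of $\mathbb{R}^2$. For $w \in \mathrm{Dom}(\mathcal{E}_V)$, write $w = \bar w + \tilde w$, where $\bar w(x) = (2\pi)^{-1}\int_0^{2\pi} w(|x|e^{i\theta})\,d\theta$ is the angular mean and $\tilde w = w - \bar w$ has zero mean on every circle. Then $\int|\nabla w|^2 = \int|\nabla\bar w|^2 + \int|\nabla\tilde w|^2$ (the gradient cross-term integrates to zero since $\int_0^{2\pi}\partial_r\tilde w\,d\theta = 0$), while $\int V|w|^2 \le 2\int V|\bar w|^2 + 2\int V|\tilde w|^2$. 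An application of Glazman's lemma to the orthogonal decomposition $\mathcal{H}_{\mathrm{rad}}\oplus\mathcal{H}_\perp$ then reduces the bound to
\[
N_-(\mathcal{E}_V) \le N_-\bigl(\mathcal{E}_{2V}^{\mathrm{rad}}\bigr) + N_-\bigl(\mathcal{E}_{2V}^{\perp}\bigr),
\]
where the two forms are the restrictions of $\mathcal{E}_{2V}$ to radial, respectively zero-angular-mean, functions.

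For the radial part, the substitution $t = \ln|x|$ turns $\mathcal{E}_{2V}^{\mathrm{rad}}$ into a one-dimensional Schr\"odinger form on $\mathbb{R}$ with effective potential $\tilde V(t) = 4\pi e^{2t}\bar V(e^t)$, where $\bar V(r)$ is the angular average of $V$ on the circle of radius $r$. A Bargmann/Calogero-type bound on the line,
\[
N_-\bigl(-d^2/dt^2 - \tilde V\bigr) \le 1 + C\int_{-\infty}^{\infty}(1+|t|)\,\tilde V(t)\,dt,
\]
translates back into a bound by $1 + C\int_{\mathbb{R}^2}(1+|\ln|x||)\,V(x)\,dx$. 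Outside the unit disk, $1+|\ln|x|| \le C\ln(1+|x|)$. Inside the unit disk, the singular weight is absorbed into the $\mathcal{B}$-norm via the Orlicz H\"older inequality, $\int_{B(0,1)}V|\ln|x||\,dx \le C\|V\|_{\mathcal{B},B(0,1)}\,\|\ln|\cdot|\|_{(\mathcal{A}),B(0,1)}$, noting that $\ln|x|\in L_\mathcal{A}(B(0,1))$ since $e^{\alpha|\ln|x||} = |x|^{-\alpha}$ is integrable on $B(0,1)$ for $\alpha<2$.

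For the non-radial part, partition $\mathbb{R}^2$ into the unit disk $U_0 = B(0,1)$ and dyadic annuli $U_n = \{e^{n-1}<|x|\le e^n\}$ for $n\ge 1$. On each $U_n$, the Poincar\'e inequality for zero-angular-mean functions gives $\|\tilde w\|_{L^2(U_n)}^2 \le Ce^{2n}\|\nabla\tilde w\|_{L^2(U_n)}^2$, and combined with Solomyak's local Orlicz-Sobolev estimate (based on the 2D Moser--Trudinger embedding $W^1_2\hookrightarrow L_\mathcal{A}$ in scale-invariant form) this yields
\[
N_-\bigl(\mathcal{E}_{2V}^{\perp}\text{ restricted to }U_n\bigr) \le C\|V\|^{(\mathrm{av})}_{\mathcal{B},U_n}.
\]
Summing over $n$ and invoking Lemma~\ref{avequivB} (valid for $n\ge 1$ since then $\mu(U_n)>1$) replaces the right-hand side by $C\sum_n\bigl(\|V\|_{\mathcal{B},U_n} + n\,\|V\|_{L^1(U_n)}\bigr)$; the $L^1$-weighted sum is dominated by $C\int V(x)\ln(1+|x|)\,dx$, while the $\mathcal{B}$-sum is handled at the level of modulars by writing $\sum_n\int_{U_n}\mathcal{B}(V/\kappa)=\int_{\mathbb{R}^2}\mathcal{B}(V/\kappa)$ with $\kappa = \|V\|_{\mathcal{B},\mathbb{R}^2}$ and then invoking \eqref{LuxNormPre}.

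The principal obstacle is establishing the local Solomyak-type bound $N_-\bigl(\mathcal{E}_{2V}^{\perp}|_{U_n}\bigr) \le C\|V\|^{(\mathrm{av})}_{\mathcal{B},U_n}$ with a constant $C$ independent of $n$; this demands a scale-invariant form of the 2D Moser--Trudinger inequality packaged as an $L_\mathcal{A}$-embedding, together with a Birman--Schwinger reduction of the eigenvalue-counting problem. A secondary technicality is the summation of $\mathcal{B}$-norms on disjoint annuli, which is circumvented by manipulating modular integrals $\int\mathcal{B}(V/\kappa)$ and passing to norms only at the end via \eqref{LuxNormPre}.
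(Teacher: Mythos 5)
The weak point is the very last step of your treatment of the non-radial part: the summation of the annular Orlicz norms. After Lemma \ref{avequivB} you are left with $\sum_{n}\|V\|_{\mathcal{B},U_n}$, and you propose to dominate it by $\|V\|_{\mathcal{B},\mathbb{R}^2}$ ``at the level of modulars'', using $\sum_n\int_{U_n}\mathcal{B}(V/\kappa)=\int_{\mathbb{R}^2}\mathcal{B}(V/\kappa)\le 1$ with $\kappa=\|V\|_{\mathcal{B},\mathbb{R}^2}$ and then \eqref{LuxNormPre}. This does not work: \eqref{LuxNormPre} (or \eqref{LuxNormImpl}) only yields $\|V\|_{(\mathcal{B},U_n)}\le\kappa$ for each $n$, which is useless after summing over infinitely many annuli, and the inequality between modular and norm goes the wrong way in the regime you need — when $\|f\|_{(\mathcal{B},U_n)}\le 1$ one has $\int_{U_n}\mathcal{B}(f)\,dx\le\|f\|_{(\mathcal{B},U_n)}$ by convexity, so summability of the modulars does not imply summability of the norms. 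In fact the inequality $\sum_n\|V\|_{\mathcal{B},U_n}\le C\,\|V\|_{\mathcal{B},\mathbb{R}^2}$ is simply false: the example $V(x)=\frac{1}{|x|\ln|x|}$ for $|x|\ge e$, $V=0$ otherwise (Section \ref{Compar}) has $\|V\|_{\mathcal{B},\mathbb{R}^2}<\infty$ but $\sum_n\|V\|_{\mathcal{B},\Omega_n}=+\infty$. The correct substitute is Lemma \ref{reversetr}, whose proof is genuinely more delicate (splitting according to $\kappa_n\le n^{-2}$ or not, and Lemma \ref{elem}) and which produces an unavoidable extra term $\int_{|x|>e}V(x)\ln\ln|x|\,dx$; since $\ln\ln|x|\le\ln(1+|x|)$ there, this term is absorbed into your weighted $L_1$ integral, so the gap is fillable — but not by the modular-additivity argument you describe.

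Apart from this, your route is viable and genuinely different from the paper's proof of Theorem \ref{clCLRth}, which splits $\mathbb{R}^2$ at the unit circle, handles the exterior by the inversion $z\mapsto 1/z$ (conformal invariance of the Dirichlet integral) so that only Solomyak's disk estimate is needed, and obtains the additive constant $1$ via Proposition 4.4 of \cite{GN}; your scheme instead re-derives a Solomyak/Grigor'yan--Nadirashvili-type bound (Sections \ref{BSsect}--\ref{GNsect}) and then performs essentially the conversion carried out in Section \ref{Compar}, where exactly Lemmas \ref{avequivB} and \ref{reversetr} are what make \eqref{SolMainEst} imply \eqref{clCLRest}. Two further points to attend to: (a) after the substitution $t=\ln|x|$ the radial form lives on the space $X$ of functions with $\int|u'|^2dt+\int|u|^2e^{2t}dt<\infty$, which is strictly larger than $W^1_2(\mathbb{R})\cap L_2(G\,dt)$, so your whole-line Bargmann bound must be justified on this maximal form domain — the paper does this via the codimension-one restriction $u(0)=0$ together with Hardy's inequality, and some such argument (or a logarithmic cut-off approximation) is needed; (b) the per-annulus bound $N_-\le C\|V\|^{\rm (av)}_{\mathcal{B},U_n}$ for mean-zero functions, which you flag as the principal obstacle, is precisely Theorem $4'$ of \cite{Sol} (cf. Lemma \ref{CLRl4}), so it can be quoted rather than reproved.
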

\begin{proof}
Let
\begin{eqnarray*}
&& \mathcal{E}_{V, m}[w] : = \int_{\Omega_m} |\nabla w(x)|^2 dx - 
\int_{\Omega_m} V(x) |w(x)|^2 dx ,  \\  
&& \mbox{Dom}\, (\mathcal{E}_{V, m}) =
W^1_2\left(\Omega_m\right)\cap L_2\left(\Omega_m, V(x)dx\right) , \ \ m = 1, 2 , 
\end{eqnarray*}
where
$$
\Omega_1 = B(0, 1) = \{x \in \mathbb{R}^2 : \ |x| < 1\} , \ \ 
\Omega_2 = \mathbb{R}^2\setminus \overline{B(0, 1)} = \{x \in \mathbb{R}^2 : \ |x| > 1\} .
$$
Then by the variational principle,
\begin{equation}\label{addit}
N_- (\mathcal{E}_V) \le N_- (\mathcal{E}_{V, 1}) + N_- (\mathcal{E}_{V, 2})
\end{equation}
(see, e.g., \cite[Lemma 3.5]{Grig}).

There exists an independent of $V$ constant $C_1 > 0$ such that
\begin{equation}\label{U1}
N_- (\mathcal{E}_{V, 1}) \le C_1 \|V\|_{\mathcal{B}, \Omega_1} + 1 
\end{equation}
(see \cite[Theorems 4 and $4'$, and Proposition 3]{Sol}).

Below, we use the complex notation $z  = x_1 + i x_2$ alongside the real one $x = (x_1, x_2)$.
Let
$$
\hat{V}(z) := \frac{1}{|z|^4} V\left(\frac{1}{z}\right) , \ \ \ \
\tilde{w}(z) := w(1/z) , \ \ w \in W^1_2\left(\Omega_2\right) ,    
\ \ \ \ |z| < 1 
$$
(cf. the proof of Proposition 5.3 in \cite{GN}). An easy calculation gives
\begin{eqnarray*}
&& \int_{\Omega_2} |\nabla w(y)|^2 dy = \int_{\Omega_1} |\nabla \tilde{w}(x)|^2 dx , \\
&& \int_{\Omega_2} V(y) |w(y)|^2 dy = \int_{\Omega_1} \hat{V}(x) |\tilde{w}(x)|^2 dx ,
\end{eqnarray*}
and it follows from \eqref{U1} that
\begin{equation}\label{U2}
N_- (\mathcal{E}_{V, 2}) \le C_1 \left\|\hat{V}\right\|_{\mathcal{B}, \Omega_1} + 1 . 
\end{equation}
Let us estimate the norm in the right-hand side. It is more convenient to work with the
Luxemburg (gauge) norm \eqref{Luxemburg}. Using the notation $\zeta = y_1 + iy_2 = 
1/z$, we get for any $\kappa > 0$
\begin{eqnarray*}
\int_{\Omega_1} \mathcal{B}\left(\frac{\hat{V}(z)}{\kappa}\right) dx =
\int_{\Omega_1} \mathcal{B}\left(\frac{V(1/z)}{\kappa |z|^4}\right) dx =
\int_{\Omega_2} \mathcal{B}\left(\frac{|\zeta|^4 V(\zeta)}{\kappa}\right) 
\frac{1}{|\zeta|^4}\, dy \\
= \int_{\Omega_2} \left(\Big(1 + \frac{1}{\kappa}\, |y|^4 V(y)\Big)
\ln\Big(1 + \frac{1}{\kappa}\, |y|^4 V(y)\Big) - \frac{1}{\kappa}\, |y|^4 V(y)\right) 
\frac{1}{|y|^4}\, dy \\
\le \int_{\Omega_2} \left(\Big(1 + \frac{1}{\kappa}\, |y|^4 V(y)\Big)
\ln\Big(1 + \frac{1}{\kappa}\, V(y)\Big) - \frac{1}{\kappa}\, |y|^4 V(y)\right) 
\frac{1}{|y|^4}\, dy \\
+ \int_{\Omega_2} 
\frac{1}{|y|^4}\, \Big(1 + \frac{1}{\kappa}\, |y|^4 V(y)\Big)
\ln\big(1 +  |y|^4\big)dy \\
\le \int_{\Omega_2} \left(\Big(1 + \frac{1}{\kappa}\, V(y)\Big)
\ln\Big(1 + \frac{1}{\kappa}\, V(y)\Big) - \frac{1}{\kappa}\,  V(y)\right)\, dy \\
+ \frac{1}{\kappa} \int_{\Omega_2} 
V(y) \ln\big(1 +  |y|^4\big)dy + \int_{\Omega_2} 
\frac{1}{|y|^4}\, \ln\big(1 +  |y|^4\big)dy \\
< \int_{\Omega_2} \mathcal{B}\left(\frac{V(y)}{\kappa}\right) dy
+ \frac{4}{\kappa} \int_{\Omega_2} 
V(y) \ln\big(1 +  |y|\big)dy \\
+ 4 \int_{\Omega_2} 
\frac{1}{|y|^4}\, \ln\big(1 +  |y|\big)dy . 
\end{eqnarray*}
Let
$$
\kappa_0 := \max\left\{\|V\|_{(\mathcal{B}, \Omega_2)},
\int_{\Omega_2} 
V(y) \ln\big(1 +  |y|\big)dy\right\} .
$$
Then
\begin{eqnarray*}
\int_{\Omega_1} \mathcal{B}\left(\frac{\hat{V}(z)}{\kappa_0}\right) dx <
1 + 4 + 4 \int_{\Omega_2} |y|^{-3}\, dy = 5 + 8\pi =: C_2 , 
\end{eqnarray*} 
and
$$
\left\|\hat{V}\right\|_{(\mathcal{B}, \Omega_1)} \le C_2 \max\left\{\|V\|_{(\mathcal{B}, \Omega_2)},
\int_{\Omega_2}  V(y) \ln\big(1 +  |y|\big)dy\right\} 
$$
(see \eqref{LuxNormImpl}). 

Now it follows from \eqref{addit}--\eqref{U2} that
\begin{equation}\label{with2}
N_- (\mathcal{E}_V) \le C_3  \left(\|V\|_{\mathcal{B}, \mathbb{R}^2} 
+ \int_{\mathbb{R}^2} V(x) \ln(1 + |x|)\, dx\right)  + 2 ,
\end{equation}
where one can take $C_3 = C_1(1 + 2C_2)$. The last inequality gives the estimate
$N_- (\mathcal{E}_V) \le 2$ for small $V$ and it is left to show that one actually has
$N_- (\mathcal{E}_V) = 1$ in this case.

According to Proposition 4.4 in \cite{GN}, $N_- (\mathcal{E}_V) = 1$ provided
\begin{equation}\label{intKV}
\sup_{x' \in \mathbb{R}^2} \int_{\mathbb{R}^2} K(x, x') V(x)\, dx
\end{equation}
is sufficiently small, where
\begin{eqnarray*}
&& K(x, x') := \ln(2 + |x|) + K_0(x, x') , \\
&& K_0(x, x') := \ln_+\frac{1}{|x - x'|} , \ \ \ 
x, x' \in \mathbb{R}^2 , \ x \not= x'
\end{eqnarray*}
(see \eqref{a+}).
It follows from \eqref{normlog} (see below) that
$$
\sup_{x' \in \mathbb{R}^2} \|K_0(\cdot, x')\|_{(\mathcal{A}, \mathbb{R}^2)} =
\|K_0(\cdot, 0)\|_{(\mathcal{A}, \mathbb{R}^2)} \le 2\pi .
$$
Using the
H\"older inequality for Orlicz  spaces (see \cite[\S 3.3, (17)]{RR}), one gets
\begin{eqnarray*}
\int_{\mathbb{R}^2} K(x, x') V(x)\, dx \le \int_{\mathbb{R}^2} V(x) \ln(2 + |x|)\, dx +
\|K_0(\cdot, x')\|_{(\mathcal{A}, \mathbb{R}^2)} \|V\|_{\mathcal{B}, \mathbb{R}^2} \\
\le \mbox{const } \left(\int_{\mathbb{R}^2} V(x) \ln(1 + |x|)\, dx 
+ \|V\|_{\mathcal{B}, \mathbb{R}^2}\right) 
\end{eqnarray*}
with a constant independent of $x'$ and $V$.
Hence \eqref{intKV} can be made arbitrarily small by making $\|V\|_{\mathcal{B}, \mathbb{R}^2} +
\|V\|_{L_1(\mathbb{R}^2,\, \ln(1 + |x|)\, dx)}$ sufficiently small. In this case,
$N_- (\mathcal{E}_V) = 1$. Combining this 
with \eqref{with2} one gets the existence of a constant $C > 0$ for which 
\eqref{clCLRest} holds.
\end{proof}

\begin{remark}\label{MVSol}
{\rm Estimate \eqref{clCLRest} looks similar to the following one obtained in \cite{MV1}:
\begin{equation}\label{MVest}
N_- (\mathcal{E}_V) \le C  \left(\int_{V(x) \ge 1} V(x) \ln V(x)\, dx
+ \int_{\mathbb{R}^2} V(x) \ln(2 + |x|)\, dx\right)  + 1 .
\end{equation}
An advantage of \eqref{clCLRest} is that its right-hand side agrees with the semi-classical 
asymtotics $N_- (\mathcal{E}_{\alpha V}) = O(\alpha)$ as $\alpha \to +\infty$. In fact,
Theorems \ref{clCLRth} and \ref{GrigTalk} are direct descendants of 
\eqref{SolMainEst} (see below) which was obtained in \cite{Sol}. 
It turns out that the right-hand side of \eqref{clCLRest} 
dominates that of \eqref{SolMainEst}, i.e. that \eqref{SolMainEst}
is actually a stronger estimate than \eqref{clCLRest} (see Section \ref{Compar}).}
\end{remark}

\section{The Khuri-Martin-Wu conjecture}

Let $V_* : \mathbb{R}_+ \to [0, +\infty]$ be the non-increasing spherical rearrangement
of $V$, i.e. let $V_*$ be non-increasing, continuous from the right and such that
$$
\left|\{x \in \mathbb{R}^2 : V_*(|x|) > s\}\right| = \left|\{x \in \mathbb{R}^2 : V(x) > s\}\right| , 
\ \ \ \forall s > 0 ,
$$
where $|E|$ denotes the two dimensional Lebesgue measure of $E \subset \mathbb{R}^2$.
(Note that $V_*(r) = V^*(\pi r^2)$, where $V^*$ is the standard non-increasing rearrangement 
used in the theory of Lorentz spaces; see, e.g., \cite[1.8]{Zie}.) Then
\begin{equation}\label{equal}
\int_{\mathbb{R}^2} F(V_*(|x|)) dx = \int_{\mathbb{R}^2} F(V(x)) dx
\end{equation}
for any measurable $F \ge 0$ (see, e.g., \cite[Ch. X, (1.10)]{RR}).

\begin{lemma}\label{Orliczlem}{\rm (Cf. \cite{Ben1})} 
There exists a constant $C_4 > 0$ such that 
\begin{equation}\label{Orliczest}
\|V\|_{\mathcal{B}, \mathbb{R}^2} \le C_4 \left(\|V\|_{L_1(\mathbb{R}^2)}
+ \int_{\mathbb{R}^2} V_*(|x|) \ln_+ \frac{1}{|x|}\, dx\right)   , \ \ \ 
\forall V\ge 0 .
\end{equation}
\end{lemma}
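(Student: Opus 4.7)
The plan is to work with the Luxemburg gauge $\|\cdot\|_{(\mathcal{B}, \mathbb{R}^2)}$, which by \eqref{Luxemburgequiv} differs from $\|\cdot\|_{\mathcal{B}, \mathbb{R}^2}$ by at most a factor of $2$. Since $V$ and $V_*(|\cdot|)$ are equimeasurable, applying \eqref{equal} with $F(s) = \mathcal{B}(s/\kappa)$ yields
$$\int_{\mathbb{R}^2}\mathcal{B}\!\left(\frac{V(x)}{\kappa}\right) dx = \int_{\mathbb{R}^2}\mathcal{B}\!\left(\frac{V_*(|x|)}{\kappa}\right) dx, \quad \forall\,\kappa>0,$$
so $\|V\|_{(\mathcal{B},\mathbb{R}^2)} = \|V_*(|\cdot|)\|_{(\mathcal{B},\mathbb{R}^2)}$ directly from \eqref{Luxemburg}. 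It therefore suffices to produce the desired bound for $V_*(|\cdot|)$ in place of $V$, and I would do this by exhibiting an explicit $\kappa$ for which $\int_{\mathbb{R}^2}\mathcal{B}(V_*(|x|)/\kappa)\,dx \le 1$, then invoking \eqref{LuxNormImpl}.

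The decisive input is the elementary weak-$L_1$ bound for the rearrangement: because $V_*$ is non-increasing,
$$\pi|x|^2\, V_*(|x|) \le \int_{B(0,|x|)} V_*(|y|)\, dy \le \|V\|_{L_1(\mathbb{R}^2)} \le K,$$
where I abbreviate $K := \|V\|_{L_1(\mathbb{R}^2)} + \int_{\mathbb{R}^2} V_*(|x|)\ln_+(1/|x|)\, dx$. Setting $\kappa := cK$ with $c \ge 1/\pi$ to be fixed, the weak-$L_1$ bound gives $V_*(|x|)/\kappa \le 1/(c\pi|x|^2)$, which is $\le 1$ whenever $|x| \ge 1$ (so $\ln_+(V_*(|x|)/\kappa)$ vanishes there), while for $|x| < 1$ one has $\ln_+(V_*(|x|)/\kappa) \le 2\ln(1/|x|)$ (using $c\pi \ge 1$). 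Combining this with the upper bound $\mathcal{B}(s) \le s + 2s\ln_+ s$ from Lemma \ref{elem},
$$\int_{\mathbb{R}^2}\mathcal{B}(V_*(|x|)/\kappa)\, dx \le \frac{\|V\|_{L_1}}{\kappa} + \frac{4}{\kappa}\int_{B(0,1)} V_*(|x|)\ln(1/|x|)\, dx \le \frac{5K}{\kappa} = \frac{5}{c}.$$
Choosing $c = 5$ makes the right-hand side equal to $1$, so \eqref{LuxNormImpl} gives $\|V_*(|\cdot|)\|_{(\mathcal{B},\mathbb{R}^2)} \le 5K$, and then \eqref{Luxemburgequiv} yields \eqref{Orliczest} with $C_4 = 10$.

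The only non-mechanical point is recognizing how the two terms of the right-hand side of \eqref{Orliczest} match the two pieces in Lemma \ref{elem}'s upper bound for $\mathcal{B}$: the $L_1$ norm absorbs the linear piece $s$, while the weighted logarithmic integral absorbs the $s\ln_+ s$ piece, and the latter absorption works only because the Chebyshev-type inequality $V_*(|x|) \le K/(\pi|x|^2)$ converts $\ln_+(V_*(|x|)/\kappa)$ into a constant multiple of $\ln_+(1/|x|)$. Everything else reduces to the routine bookkeeping above, so I do not anticipate a serious obstacle.
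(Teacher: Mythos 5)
Your proof is correct and follows essentially the same route as the paper: the Chebyshev-type bound $V_*(|x|)\le \|V\|_{L_1}/(\pi|x|^2)$, the upper estimate $\mathcal{B}(s)\le s+2s\ln_+ s$ from Lemma \ref{elem}, equimeasurability via \eqref{equal}, and the Luxemburg-norm mechanism \eqref{LuxNormImpl}, \eqref{Luxemburgequiv}. The only differences are cosmetic — you take a single scale $\kappa=5K$ making the modular $\le 1$, whereas the paper uses $\kappa_1=\max\{\|V\|_{L_1}/\pi,\int V_*\ln_+(1/|x|)\,dx\}$ and allows the modular to be $\le \pi+4$ before invoking \eqref{LuxNormImpl} — so there is nothing further to flag.
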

\begin{proof}
It follows from the devinition of $V_*$ that
\begin{eqnarray*}
\left|\{x \in \mathbb{R}^2 : V(x) \ge V_*(r)\}\right| = 
\left|\{x \in \mathbb{R}^2 : V_*(|x|) \ge V_*(r)\}\right| \\
\ge \left|\{x \in \mathbb{R}^2 : |x| \le r\}\right| = \pi r^2 , \ \ \ \forall r > 0 .
\end{eqnarray*}
On the other hand, Chebyshev's inequality implies
$$
\left|\{x \in \mathbb{R}^2 : V(x) \ge V_*(r)\}\right| \le 
\frac{\|V\|_{L_1(\mathbb{R}^2)}}{V_*(r)}\, .
$$
Hence
\begin{equation}\label{star}
V_*(r) \le \frac{\|V\|_{L_1(\mathbb{R}^2)}}{\pi r^2}\, .
\end{equation}

Similarly to the proof of Theorem \ref{clCLRth}, it will be convenient for us to work with the
norm \eqref{Luxemburg}. Let
$$
\kappa_1 := \max\left\{\frac{\|V\|_{L_1(\mathbb{R}^2)}}{\pi}\, , \
\int_{\mathbb{R}^2} V_*(|x|) \ln_+ \frac{1}{|x|}\, dx\right\} .
$$
Then it follows from Lemma \ref{elem} and \eqref{equal}, \eqref{star} that
\begin{eqnarray*}
&& \int_{\mathbb{R}^2} \mathcal{B}\left(\frac{V(x)}{\kappa_1}\right) dx \le
\frac1{\kappa_1} \int_{\mathbb{R}^2} V(x) dx + 
\frac2{\kappa_1} \int_{\mathbb{R}^2} V(x) \ln_+\frac{V(x)}{\kappa_1}\, dx \\
&& \le \pi + \frac2{\kappa_1} \int_{\mathbb{R}^2} V_*(|x|) \ln_+\frac{V_*(|x|)}{\kappa_1}\, dx
\le \pi + \frac2{\kappa_1} \int_{\mathbb{R}^2} 
V_*(|x|) \ln_+\frac{\|V\|_{L_1(\mathbb{R}^2)}}{\kappa_1\pi |x|^2}\, dx \\
&& \le \pi + \frac2{\kappa_1} \int_{\mathbb{R}^2} 
V_*(|x|) \ln_+\frac{1}{|x|^2}\, dx \le \pi + 4 =: C_5 .
\end{eqnarray*}
Hence
$$
\|V\|_{(\mathcal{B}, \mathbb{R}^2)} \le C_5 \max\left\{\frac{\|V\|_{L_1(\mathbb{R}^2)}}{\pi}\, , \
\int_{\mathbb{R}^2} V_*(|x|) \ln_+ \frac{1}{|x|}\, dx\right\} 
$$
(see \eqref{LuxNormImpl}), 
which implies \eqref{Orliczest} with $C_4 = 2C_5$ (see \eqref{Luxemburgequiv}).
\end{proof}

\begin{lemma}\label{Orliczlem2}{\rm (Cf. \cite{Ben1})} 
$$
\int_{\mathbb{R}^2} V_*(|x|) \ln_+ \frac{1}{|x|}\, dx \le 
4\pi \|V\|_{\mathcal{B}, \mathbb{R}^2}  , \ \ \ \forall V\ge 0 .
$$
\end{lemma}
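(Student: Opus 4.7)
The plan is to apply the H\"older inequality for Orlicz spaces to the product $V_*(|\cdot|)\cdot\ln_+(1/|\cdot|)$, exploit the rearrangement invariance of the Luxemburg $\mathcal{B}$-norm, and finish with an explicit computation of the $\mathcal{A}$-norm of the logarithmic kernel $\ln_+(1/|\cdot|)$.

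First I would observe that $\|V_*(|\cdot|)\|_{(\mathcal{B},\mathbb{R}^2)} = \|V\|_{(\mathcal{B},\mathbb{R}^2)}$. This is immediate from equimeasurability: substituting $F(t) = \mathcal{B}(t/\kappa)$ in \eqref{equal} gives $\int_{\mathbb{R}^2}\mathcal{B}(V_*(|x|)/\kappa)\,dx = \int_{\mathbb{R}^2}\mathcal{B}(V(x)/\kappa)\,dx$ for every $\kappa > 0$, so the two Luxemburg infima coincide.

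Next I would invoke the H\"older inequality for Orlicz spaces in the form $\int|fg|\,d\mu \le \|f\|_{(\Psi)}\|g\|_{\Phi}$ (cf.\ \cite[\S3.3, (17)]{RR}) with $\Psi = \mathcal{B}$, $\Phi = \mathcal{A}$, and combine it with the equivalence $\|g\|_{\mathcal{A}} \le 2\|g\|_{(\mathcal{A})}$ from \eqref{Luxemburgequiv} to obtain
\[
\int_{\mathbb{R}^2} V_*(|x|)\ln_+\!\frac{1}{|x|}\,dx \;\le\; 2\,\|V\|_{(\mathcal{B},\mathbb{R}^2)}\,\bigl\|\ln_+(1/|\cdot|)\bigr\|_{(\mathcal{A},\mathbb{R}^2)}.
\]

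The remaining task is to establish $\|\ln_+(1/|\cdot|)\|_{(\mathcal{A},\mathbb{R}^2)} \le 2\pi$. Switching to polar coordinates on the unit disc and decomposing $\mathcal{A}(\kappa^{-1}\ln(1/|x|)) = |x|^{-1/\kappa} - 1 + \kappa^{-1}\ln|x|$, I would compute, for every $\kappa > 1/2$,
\[
\int_{|x|<1} \mathcal{A}\!\left(\frac{1}{\kappa}\ln\frac{1}{|x|}\right) dx \;=\; 2\pi\int_0^1\!\left(r^{1-1/\kappa} - r + \frac{r\ln r}{\kappa}\right) dr \;=\; \frac{\pi}{2\kappa(2\kappa-1)},
\]
where the three elementary one-dimensional integrals telescope to this closed form. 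At $\kappa = 2\pi$ the value is $1/(4(4\pi-1)) < 1$, and \eqref{LuxNormImpl} with $C_0 = 1$ delivers $\|\ln_+(1/|\cdot|)\|_{(\mathcal{A})} \le 2\pi$. Combining everything and using $\|V\|_{(\mathcal{B})} \le \|V\|_{\mathcal{B}}$ from \eqref{Luxemburgequiv} yields the asserted bound $4\pi\,\|V\|_{\mathcal{B},\mathbb{R}^2}$. The only substantive piece is the polar-coordinates integral; the rest is routine application of the Orlicz-space machinery already set up in Section~\ref{App}, so I do not anticipate any real obstacle.
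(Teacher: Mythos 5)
Your proposal is correct and follows essentially the same route as the paper: rearrangement invariance of the $\mathcal{B}$-gauge norm via \eqref{equal}, the Orlicz--H\"older inequality, and the bound $\|\ln_+(1/|\cdot|)\|_{(\mathcal{A})}\le 2\pi$, yielding the constant $4\pi$. The only (harmless) variation is how that last bound is obtained — the paper uses $\mathcal{A}(s)\le e^s$ together with \eqref{LuxNormImpl}, while you evaluate $\int\mathcal{A}\bigl(\kappa^{-1}\ln(1/|x|)\bigr)dx=\pi/(2\kappa(2\kappa-1))$ exactly and take $\kappa=2\pi$; your closed form and the conclusion check out.
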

\begin{proof}
Let $\mathbb{D} := \{x \in \mathbb{R}^2 : \ |x| \le 1\}$.
The H\"older inequality (see \cite[\S 3.3, (4)]{RR}) and 
\eqref{equal} imply
\begin{eqnarray*}
\int_{\mathbb{R}^2} V_*(|x|) \ln_+ \frac{1}{|x|}\, dx = 
\int_{\mathbb{D}} V_*(|x|) \ln \frac{1}{|x|}\, dx 
\le 2 \|V_*(|\cdot|)\|_{(\mathcal{B}, \mathbb{D})} 
\left\|\ln \frac{1}{|\cdot|}\right\|_{(\mathcal{A}, \mathbb{D})} \\
\le 2 \|V_*(|\cdot|)\|_{(\mathcal{B}, \mathbb{R}^2)} 
\left\|\ln \frac{1}{|\cdot|}\right\|_{(\mathcal{A}, \mathbb{D})} =
2 \|V\|_{(\mathcal{B}, \mathbb{R}^2)} 
\left\|\ln \frac{1}{|\cdot|}\right\|_{(\mathcal{A}, \mathbb{D})} .
\end{eqnarray*}
$$
\int_{\mathbb{D}} \mathcal{A}\left(\ln \frac{1}{|x|}\right)\, dx \le
\int_{\mathbb{D}} e^{\ln \frac{1}{|x|}}\, dx = \int_{\mathbb{D}} \frac{1}{|x|}\, dx =
\int_{-\pi}^\pi \int_0^1 1\, dr d\vartheta = 2\pi .
$$
Hence
\begin{equation}\label{normlog}
\left\|\ln \frac{1}{|\cdot|}\right\|_{(\mathcal{A}, \mathbb{D})} \le 2\pi
\end{equation}
(see \eqref{LuxNormImpl}).
\end{proof}

Lemmas \ref{Orliczlem} and \ref{Orliczlem2} imply that Theorem \ref{clCLRth} is
equivalent to the following result. 

\begin{theorem}\label{KMWth}
There exists a constant $C_6 > 0$ such that 
\begin{eqnarray}\label{myKMWest}
N_- (\mathcal{E}_V) \le C_6  \left(\int_{\mathbb{R}^2} V(x) \ln(2 + |x|)\, dx + 
\int_{\mathbb{R}^2} V_*(|x|) \ln_+ \frac{1}{|x|}\, dx\right)  + 1 , \\ 
\forall V\ge 0 . \nonumber
\end{eqnarray}
\end{theorem}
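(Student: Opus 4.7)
The approach is to derive Theorem \ref{KMWth} directly from Theorem \ref{clCLRth} by replacing the Orlicz norm $\|V\|_{\mathcal{B}, \mathbb{R}^2}$ with the symmetric-rearrangement integral $\int_{\mathbb{R}^2} V_*(|x|) \ln_+ (1/|x|)\, dx$ plus a manageable $L_1$ term. This is exactly what Lemma \ref{Orliczlem} provides, so the main work has already been carried out; the theorem is essentially a reformulation.

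The plan is as follows. First I would apply Theorem \ref{clCLRth} to obtain
$$
N_-(\mathcal{E}_V) \le C\left(\|V\|_{\mathcal{B}, \mathbb{R}^2} + \int_{\mathbb{R}^2} V(x)\ln(1+|x|)\, dx\right) + 1.
$$
Then I would insert the bound of Lemma \ref{Orliczlem},
$$
\|V\|_{\mathcal{B}, \mathbb{R}^2} \le C_4\left(\|V\|_{L_1(\mathbb{R}^2)} + \int_{\mathbb{R}^2} V_*(|x|)\ln_+\frac{1}{|x|}\, dx\right),
$$
into the previous estimate. This yields three terms on the right: $\|V\|_{L_1}$, $\int V_*(|x|)\ln_+(1/|x|)\,dx$, and $\int V(x)\ln(1+|x|)\,dx$.

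The last step is to absorb $\|V\|_{L_1}$ and the $\ln(1+|x|)$ integral into the single weighted integral $\int V(x)\ln(2+|x|)\, dx$ that appears in \eqref{myKMWest}. This follows from the elementary pointwise inequalities $\ln 2 \le \ln(2+|x|)$ and $\ln(1+|x|) \le \ln(2+|x|)$, which give
$$
\|V\|_{L_1(\mathbb{R}^2)} \le \frac{1}{\ln 2}\int_{\mathbb{R}^2} V(x)\ln(2+|x|)\, dx, \qquad \int_{\mathbb{R}^2} V(x)\ln(1+|x|)\, dx \le \int_{\mathbb{R}^2} V(x)\ln(2+|x|)\, dx.
$$
Collecting everything and choosing $C_6$ appropriately yields \eqref{myKMWest}.

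There is no real obstacle here: the content is entirely in Theorem \ref{clCLRth} and Lemma \ref{Orliczlem}, and what remains is a chain of trivial majorisations. Lemma \ref{Orliczlem2} plays no role in this direction; it is only needed to establish the \emph{converse} implication that makes the two theorems equivalent rather than merely one implying the other.
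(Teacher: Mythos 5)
Your proposal is correct and follows essentially the same route as the paper: the paper deduces Theorem \ref{KMWth} from Theorem \ref{clCLRth} via Lemma \ref{Orliczlem} (with the same trivial absorption of the $L_1$ and $\ln(1+|x|)$ terms into $\int V(x)\ln(2+|x|)\,dx$), reserving Lemma \ref{Orliczlem2} for the converse direction that makes the two statements equivalent, exactly as you observe.
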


Theorem \ref{KMWth} is in turn equivalent to the following estimate which was 
conjectured in \cite{KMW}
\begin{eqnarray}\label{KMWest}
N_- (\mathcal{E}_V) &\le&  c_1 \int_{\mathbb{R}^2} V_*(|x|) \ln_+ \frac{1}{|x|}\, dx +
c_2 \int_{\mathbb{R}^2} V(x) \ln_+ |x|\, dx \nonumber \\
&+& c_3 \int_{\mathbb{R}^2} V(x)\, dx + 1  ,
\end{eqnarray} 
provided one does not care about the exact values of the constants.
It is natural however to ask what the best constants in \eqref{KMWest} are. 
It was conjectured in \cite{CKMW} that a similar estimate
\begin{eqnarray}\label{CKMWest}
N_- (\mathcal{E}_V) &\le&  d_1 \int_{\mathbb{R}^2} V_*(|x|) \ln_+ \frac{1}{|x|}\, dx +
d_2 \int_{\mathbb{R}^2} V(x) \ln |x|\, dx \nonumber \\
&+& d_3 \int_{\mathbb{R}^2} V(x)\, dx + 1  
\end{eqnarray} 
holds with 
\begin{equation}\label{eqs}
2\pi d_1 = 2 , \ \ 2\pi d_2 = 1, \ \ 2\pi d_3 = \frac2{\sqrt{3}}\, .
\end{equation}
The difference between \eqref{KMWest} and \eqref{CKMWest} is that the second integral in the
latter can be negative. If $V(x) = F(|x|)$ is a decreasing radial potential, then 
\eqref{KMWest} and \eqref{CKMWest} become
\begin{eqnarray*}
N_- (\mathcal{E}_V) &\le&  2\pi c_1 \int_0^1r F(r)  |\ln r|\, dr +
2\pi c_2 \int_1^\infty r F(r)  \ln r\, dr \\
&+& 2\pi c_3 \int_0^\infty r F(r)\, dr + 1
\end{eqnarray*}
and
\begin{eqnarray*}
N_- (\mathcal{E}_V) &\le&  2\pi (d_1 - d_2) \int_0^1r F(r) |\ln r|\, dr +
2\pi d_2 \int_1^\infty r F(r) \ln r\, dr  \\
&+& 2\pi d_3 \int_0^\infty r F(r)\, dr + 1
\end{eqnarray*}
respectively. This allows one to obtain lower estimates for $c_1$, $c_2$, $c_3$ and
$d_1$, $d_2$, $d_3$ from the known results on radial potentials and explains the 
values of $d_1$ and $d_2$ in \eqref{eqs}. Theorem 5.2 in \cite{LS0} suggests that one could
perhaps try to prove  \eqref{CKMWest} with $2\pi d_3 = 1$.

\begin{remark}\label{MVKMW}
{\rm The proof of Theorem \ref{KMWth} relies on an idea from  \cite[Section 5]{MV1} where 
\eqref{MVest} was used in place of \eqref{clCLRest} and were the Khuri-Martin-Wu conjecture
was proved when $\|V\|_{L_1} \le 1$ or $V$ has a compact support of a fixed size.}
\end{remark}

\section{The Birman-Solomyak method}\label{BSsect}

Our description of the Birman-Solomyak method of estimating $N_- (\mathcal{E}_V)$ follows
\cite{BL, Sol, Sol2}, although some details are different. 

We denote the polar coordinates in $\mathbb{R}^2$ by $(r, \vartheta)$, \ $r \in \mathbb{R}_+$, \
$\vartheta \in = (-\pi, \pi]$. Let
$$
f_{\mathcal{R}}(r) := \frac{1}{2\pi}\, \int_{-\pi}^\pi f(r, \vartheta)\, d\vartheta , \ \ \ 
f_{\mathcal{N}}(r, \vartheta) := f(r, \vartheta) - f_{\mathcal{R}}(r) , \ \ \ 
f \in C\left(\mathbb{R}^2\setminus\{0\}\right) .
$$
Then
\begin{equation}\label{fN0}
\int_{-\pi}^\pi f_{\mathcal{N}}(r, \vartheta)\, d\vartheta = 0 , \ \ \ \forall r > 0 ,
\end{equation}
and it is easy to see that 
$$
\int_{\mathbb{R}^2} f_{\mathcal{R}} g_{\mathcal{N}}\, dx = 0 , \ \ \ \forall 
f, g \in C\left(\mathbb{R}^2\setminus\{0\}\right)\cap L_2\left(\mathbb{R}^2\right) .
$$
Hence
$f \mapsto Pf := f_{\mathcal{R}}$
extends to an orthogonal projection $P : L_2\left(\mathbb{R}^2\right) \to 
L_2\left(\mathbb{R}^2\right)$. 

Using the representation of the gradient in polar coordinates one gets
\begin{eqnarray*}
&& \int_{\mathbb{R}^2} \nabla f_{\mathcal{R}} \nabla g_{\mathcal{N}}\, dx =
\int_{\mathbb{R}^2} \left(\frac{\partial f_{\mathcal{R}}}{\partial r}
\frac{\partial g_{\mathcal{N}}}{\partial r} + \frac1{r^2}
\frac{\partial f_{\mathcal{R}}}{\partial \vartheta}
\frac{\partial g_{\mathcal{N}}}{\partial \vartheta}\right)\, dx \\
&& = \int_{\mathbb{R}^2} \frac{\partial f_{\mathcal{R}}}{\partial r}
\frac{\partial g_{\mathcal{N}}}{\partial r}\, dx =
\int_{\mathbb{R}^2} \left(\frac{\partial f}{\partial r}\right)_{\mathcal{R}}
\left(\frac{\partial g}{\partial r}\right)_{\mathcal{N}}\, dx = 0 , \ \ \ 
\forall f, g \in C^\infty_0\left(\mathbb{R}^2\right) .
\end{eqnarray*}
Hence $P : W^1_2\left(\mathbb{R}^2\right) \to W^1_2\left(\mathbb{R}^2\right)$ is also 
an orthogonal projection.

Since
\begin{eqnarray*}
&& \int_{\mathbb{R}^2} |\nabla w|^2\, dx = \int_{\mathbb{R}^2} |\nabla w_{\mathcal{R}}|^2\, dx + 
\int_{\mathbb{R}^2} |\nabla w_{\mathcal{N}}|^2\, dx , \\
&& \int_{\mathbb{R}^2} V |w|^2\, dx \le 2 \int_{\mathbb{R}^2} V |w_{\mathcal{R}}|^2\, dx +
2\int_{\mathbb{R}^2} V |w_{\mathcal{N}}|^2\, dx ,
\end{eqnarray*}
one has
\begin{equation}\label{BSest}
N_- (\mathcal{E}_V) \le N_- (\mathcal{E}_{\mathcal{R}, 2V}) + N_- (\mathcal{E}_{\mathcal{N}, 2V}) ,
\end{equation}
where $\mathcal{E}_{\mathcal{R}, 2V}$ and $\mathcal{E}_{\mathcal{N}, 2V}$ denote the restrictions
of the form $\mathcal{E}_{2V}$ to $P W^1_2\left(\mathbb{R}^2\right)$ and
$(I - P) W^1_2\left(\mathbb{R}^2\right)$ respectively.

\begin{remark}\label{RadialBS}
{\rm If the potential $V$ is radial, i.e. $V(x) = F(r)$, then it is easy to see that 
$$
P(V w) = V Pw , \ \ \ \forall w \in L_2\left(\mathbb{R}^2\right)\cap L_2\left(\mathbb{R}^2, V(x)dx\right) ,
$$
and one gets a sharper version of \eqref{BSest}:
\begin{equation}\label{RadialBSest}
N_- (\mathcal{E}_V) = N_- (\mathcal{E}_{\mathcal{R}, V}) + N_- (\mathcal{E}_{\mathcal{N}, V}) 
\end{equation}
(cf. \cite{LS0}).}
\end{remark}
 
Let us estimate the right-hand side of \eqref{BSest}. We start with the first term, i.e
with the case of $P W^1_2\left(\mathbb{R}^2\right) = 
\left\{w \in W^1_2\left(\mathbb{R}^2\right) : \ w(x) = w_{\mathcal{R}}(r)\right\}$. Using the notation
$r = e^t$, $w(x) = w_{\mathcal{R}}(r) = u(t)$, we get
\begin{eqnarray*}
&& \int_{\mathbb{R}^2} |\nabla w(x)|^2\, dx = 2\pi \int_{\mathbb{R}} |u'(t)|^2\, dt , \ \ \ 
\int_{\mathbb{R}^2} |w(x)|^2\, dx = 2\pi \int_{\mathbb{R}} |u(t)|^2 e^{2t}\, dt , \\
&& \int_{\mathbb{R}^2} V(x) |w(x)|^2\, dx = 2\pi \int_{\mathbb{R}} G(t) |u(t)|^2\, dt , 
\end{eqnarray*}
where
\begin{equation}\label{V1d}
G(t) := \frac{e^{2t}}{2\pi}\, \int_{-\pi}^\pi V(e^t, \vartheta)\, d\vartheta .
\end{equation}
The above change of variable defines a unitary operator from $P W^1_2\left(\mathbb{R}^2\right)$
onto 
$$
X := \left\{u \in W^1_{2, {\rm loc}}(\mathbb{R}) : \ 
\|u\|_X := \sqrt{2\pi}\, \left(\int_{\mathbb{R}} |u'|^2\, dt + 
\int_{\mathbb{R}} |u|^2 e^{2t}\, dt\right)^{1/2} < \infty\right\} .
$$
Let $X_0 := \{u \in X : \ u(0) = 0\}$ and 
\begin{equation}\label{X0H0}
\mathcal{H}_0 := \left\{u \in W^1_{2, {\rm loc}}(\mathbb{R}) : \ 
u(0) = 0 , \ \int_{\mathbb{R}} |u'|^2\, dt < \infty\right\} .
\end{equation}
Let $\mathcal{E}_{X, G}$, $\mathcal{E}_{X_0, G}$ and $\mathcal{E}_{\mathcal{H}_0, G}$
denote the forms defined by
\begin{equation}\label{Gform}
\int_{\mathbb{R}} |u'(t)|^2\, dt - \int_{\mathbb{R}} G(t) |u(t)|^2\, dt
\end{equation}
on the domains
$$
X\cap L_2\left(\mathbb{R}, G(t)dt\right) ,  \ \ \ X_0\cap L_2\left(\mathbb{R}, G(t)dt\right) \ \
\mbox{ and } \ \ \mathcal{H}_0\cap L_2\left(\mathbb{R}, G(t)dt\right)
$$
respectively. Since $\dim(X/X_0) = 1$ and $X_0 \subset \mathcal{H}_0$, one has
$$
N_- (\mathcal{E}_{\mathcal{R}, 2V}) = N_- (\mathcal{E}_{X, 2G}) \le 
N_- (\mathcal{E}_{X_0, 2G}) + 1 \le N_- (\mathcal{E}_{\mathcal{H}_0, 2G}) + 1 .
$$
It follows from Hardy's inequality (see, e.g., \cite[Theorem 327]{HLP}) that
$$
\int_{\mathbb{R}} |u'|^2\, dt + \kappa\, \int_{\mathbb{R}} \frac{|u|^2}{|t|^2}\, dt \le 
(4\kappa + 1) \int_{\mathbb{R}} |u'|^2\, dt ,  \ \ \ \forall u \in \mathcal{H}_0 , \ \ 
\forall \kappa \ge 0 .
$$
Hence
$$
N_- (\mathcal{E}_{\mathcal{H}_0, 2G}) \le N_- (\mathcal{E}_{\kappa, G}) ,
$$
where
\begin{eqnarray}\label{kappaG}
\mathcal{E}_{\kappa, G}[u] := \int_{\mathbb{R}} |u'(t)|^2\, dt + 
\kappa\, \int_{\mathbb{R}} \frac{|u(t)|^2}{|t|^2}\, dt - 
2(4\kappa + 1)\, \int_{\mathbb{R}} G(t) |u(t)|^2\, dt , \\
\mbox{Dom}\, (\mathcal{E}_{\kappa, G}) = \mathcal{H}_0\cap L_2\left(\mathbb{R}, G(t)dt\right) .
\nonumber
\end{eqnarray}
It follows from the above that
\begin{equation}\label{VkappaG}
N_- (\mathcal{E}_{\mathcal{R}, 2V}) \le N_- (\mathcal{E}_{\kappa, G}) + 1 .
\end{equation}
We estimate $N_- (\mathcal{E}_{\kappa, G})$ by partitioning $\mathbb{R}$ into the intervals
\begin{eqnarray*}
I_n := [2^{n - 1}, 2^n], \ n > 0 , \ \ \ I_0 := [-1, 1] , \ \ \ 
I_n := [-2^{|n|}, -2^{|n| - 1}], \ n < 0 ,
\end{eqnarray*}
and by using the variational principle to obtain 
\begin{equation}\label{VkappaGn}
N_- (\mathcal{E}_{\kappa, G}) \le \sum_{n \in \mathbb{Z}} N_- (\mathcal{E}_{\kappa, G, n}) ,
\end{equation}
where
\begin{eqnarray*}
&& \mathcal{E}_{\kappa, G, n}[u] := \int_{I_n} |u'|^2\, dt + 
\kappa\, \int_{I_n} \frac{|u|^2}{|t|^2}\, dt - 
2(4\kappa + 1)\, \int_{I_n} G |u|^2\, dt , \\
&& \mbox{Dom}\, (\mathcal{E}_{\kappa, G, n}) = 
W^1_2(I_n)\cap L_2\left(I_n, G(t)dt\right) , \ n \in \mathbb{Z}\setminus\{0\} , \\
&& \mbox{Dom}\, (\mathcal{E}_{\kappa, G, 0}) = 
\{u \in W^1_2(I_0) : \ u(0) = 0\}\cap L_2\left(I_0, G(t)dt\right) .
\end{eqnarray*}

Let $n > 0$. For any $N \in \mathbb{N}$, there exists a subspace $\mathcal{F}_N \in 
\mbox{Dom}\, (\mathcal{E}_{\kappa, G, n})$ of co-dimension $N$ such that
$$
\int_{I_n} G |u|^2\, dt \le \left(\frac{|I_n|}{N^2}\, \int_{I_n} G\, dt\right) \int_{I_n} |u'|^2\, dt , \ \ \
\forall u \in \mathcal{F}_N
$$
(see \cite[the proof of Proposition 4.2 in Appendix]{Sol2}). If
$$
2(4\kappa + 1)\, \frac{|I_n|}{N^2}\, \int_{I_n} G\, dt \le 1 ,
$$
then $ \mathcal{E}_{\kappa, G, n}[u] \ge 0$, $\forall u \in \mathcal{F}_N$, and  
$N_- (\mathcal{E}_{\kappa, G, n}) \le N$. Let
\begin{equation}\label{calAn}
\mathcal{A}_n := \int_{I_n} |t| G(t)\, dt , \ n \not= 0 , \ \ \ \mathcal{A}_0 := \int_{I_0} G(t)\, dt .
\end{equation}
Since $|I_n|\, \int_{I_n} G\, dt \le \mathcal{A}_n$, $n \not= 0$, it follows from the above that
$$
2(4\kappa + 1) \mathcal{A}_n \le N^2 \ \Longrightarrow \ N_- (\mathcal{E}_{\kappa, G, n}) \le N .
$$
Hence 
\begin{equation}\label{ceil1}
N_- (\mathcal{E}_{\kappa, G, n}) \le \left\lceil\sqrt{2(4\kappa + 1) \mathcal{A}_n}\right\rceil , 
\end{equation}
where $\lceil\cdot\rceil$ denotes the ceiling function, i.e. $\lceil a\rceil$ is the smallest integer 
not less than $a$. The right-hand side of this estimate is at least 1, so one cannot feed it straight 
into \eqref{VkappaGn}. One needs to find conditions under which 
$N_- (\mathcal{E}_{\kappa, G, n}) = 0$. 

It follows from \eqref{1dHSob}, \eqref{1dHOpt} that
$$
\int_{I_n} G |u|^2\, dt \le \mathcal{A}_n C(\kappa) \left(\int_{I_n} |u'|^2\, dt + 
\kappa\, \int_{I_n} \frac{|u|^2}{|t|^2}\, dt\right) ,
$$
where
$$
C(\kappa) = \frac{1}{2\kappa}\,\left(1 + \sqrt{1 + 4\kappa}\,
\frac{2^{\sqrt{1 + 4\kappa}} + 1}{2^{\sqrt{1 + 4\kappa}} - 1}\right)\, .
$$
Hence $N_- (\mathcal{E}_{\kappa, G, n}) = 0$, i.e. $\mathcal{E}_{\kappa, G, n} \ge 0$, provided
$\mathcal{A}_n \le \Phi(\kappa)$, where
\begin{equation}\label{Psikappa}
\Phi(\kappa) := \frac{\kappa}{4\kappa + 1}\,\left(1 + \sqrt{4\kappa + 1}\,
\frac{2^{\sqrt{4\kappa + 1}} + 1}{2^{\sqrt{4\kappa + 1}} - 1}\right)^{-1}\, .
\end{equation}
The above estimates for $N_- (\mathcal{E}_{\kappa, G, n})$ clearly hold for $n < 0$ as well, but
the case $n = 0$ requires some changes. Since $u(0) = 0$ for any 
$u \in \mbox{Dom}\, (\mathcal{E}_{\kappa, G, 0})$, one can use the same argument as the one
leading to \eqref{ceil1}, but with two differences: a) $\mathcal{F}_N$ can be chosen to be of 
co-dimension $N - 1$, and b) $|I_0|\, \int_{I_0} G\, dt = 2\mathcal{A}_0$. This gives the following 
analogue of \eqref{ceil1}
$$
N_- (\mathcal{E}_{\kappa, G, 0}) \le \left\lceil2\,\sqrt{(4\kappa + 1) \mathcal{A}_0}\right\rceil - 1 <
2\,\sqrt{(4\kappa + 1) \mathcal{A}_0}\, .
$$
In particular, $N_- (\mathcal{E}_{\kappa, G, 0}) = 0$ if 
$\mathcal{A}_0 \le 1/(4(4\kappa + 1))$. Using Remark \ref{HSob01}, one can easily see that the
implication $\mathcal{A}_n \le \Phi(\kappa) \  \Longrightarrow \ 
N_- (\mathcal{E}_{\kappa, G, n}) = 0$ remains true for $n = 0$.
Now it follows from \eqref{VkappaG}, \eqref{VkappaGn} that
\begin{equation}\label{RadGenEst}
N_- (\mathcal{E}_{\mathcal{R}, 2V}) \le 1 + 
\sum_{\{n \in \mathbb{Z}\setminus\{0\} : \ \mathcal{A}_n > \Phi(\kappa)\}} 
\left\lceil\sqrt{2(4\kappa + 1) \mathcal{A}_n}\right\rceil + 
2\,\sqrt{(4\kappa + 1) \mathcal{A}_0}\, ,
\end{equation}
and one can drop the last term in $\mathcal{A}_0 \le \Phi(\kappa)$. The presence of the 
parameter $\kappa$ in this estimate allows a degree of flexibility. In order to decrease the number
of terms in the sum in the right-hand side, one should choose $\kappa$ in such a way that
$\Phi(\kappa)$ is close to its maximum. A Mathematica calculation shows that the maximum is
approximately $0.046$ and is achieved at $\kappa \approx 1.559$. For values of $\kappa$
close to 1.559, one has
$$
\mathcal{A}_n > \Phi(\kappa)  \  \Longrightarrow \ 
\sqrt{2(4\kappa + 1) \mathcal{A}_n}  > \sqrt{2(4\kappa + 1) \Phi(\kappa)} \approx 0.816 .
$$
Since $\lceil a\rceil \le 2a$ for $a \ge 1/2$, \eqref{RadGenEst} implies
$$
N_- (\mathcal{E}_{\mathcal{R}, 2V}) \le 1 + 2\sqrt{2(4\kappa + 1)}
\sum_{\mathcal{A}_n > \Phi(\kappa)}  \sqrt{\mathcal{A}_n}
$$
with $\kappa \approx 1.559$. Hence
$$
N_- (\mathcal{E}_{\mathcal{R}, 2V}) \le 1 + 7.61
\sum_{\mathcal{A}_n > 0.046}  \sqrt{\mathcal{A}_n}\, .
$$
Let us rewrite this estimate in terms of the original potential $V$. Set
\begin{eqnarray}\label{ringsR}
&& U_n :=  \{x \in \mathbb{R}^2 : \ e^{2^{n - 1}} < |x| < e^{2^n}\} , \ n > 0 ,  \nonumber \\ 
&& U_0 := \{x \in \mathbb{R}^2 : \ e^{-1} < |x| < e\}, \\
&& U_n :=  \{x \in \mathbb{R}^2 : \ e^{-2^{|n|}} < |x| < e^{-2^{|n| - 1}}\} , \ n < 0 , \nonumber 
\end{eqnarray}
and
\begin{equation}\label{AnBnR}
A_0 := \int_{U_0} V(x)\, dx , \ \
A_n := \int_{U_n} V(x) |\ln|x||\, dx , \ n \not= 0  .
\end{equation}
Then it follows from \eqref{V1d}, \eqref{calAn} that $A_n = 2\pi \mathcal{A}_n$, and we have
$$
N_- (\mathcal{E}_{\mathcal{R}, 2V}) \le 1 + 3.04
\sum_{A_n > 0.29}  \sqrt{A_n}\, .
$$
Below, we will use the following less precise but nicer looking estimate
\begin{equation}\label{RadMainEst}
N_- (\mathcal{E}_{\mathcal{R}, 2V}) \le 1 + 4
\sum_{A_n > 1/4}  \sqrt{A_n}\, .
\end{equation}

\begin{remark}\label{SqrtHist}
{\rm To the best of my knowledge, estimates of this type (without explicit constants) were first 
obtained by M. Birman and M. Solomyak for Schr\"odinger-type operators of order $2\ell$ in 
$\mathbb{R}^d$ with $2\ell > d$ (see \cite[\S 6]{BS}). A. Grigor'yan and N. Nadirashvili 
(\cite{Grig}) obtained an estimate of this type for  two-dimensional Schr\"odinger operators
(see \eqref{GrigTalkEst0} below). The Grigor'yan-Nadirashvili estimate is discussed in the
next section.}
\end{remark}

Returning to \eqref{BSest}, we need to estimate $N_- (\mathcal{E}_{\mathcal{N}, 2V})$.
To this end, we split $\mathbb{R}^2$ into the following unnuli
\begin{equation}\label{ringsN}
\Omega_n :=  \{x \in \mathbb{R}^2 : \ e^{n} < |x| < e^{n + 1}\} , \ \ \ n \in \mathbb{Z} . 
\end{equation}
It follows from \eqref{fN0} that
$$
\int_{\Omega_n} w(x)\, dx = 0 , \ \ \  \forall w \in (I - P) W^1_2\left(\mathbb{R}^2\right) , \ \ 
\forall n \in \mathbb{Z} . 
$$
Hence the variational principle implies
\begin{equation}\label{VNn}
N_- (\mathcal{E}_{\mathcal{N}, 2V}) \le \sum_{n \in \mathbb{Z}} 
N_- (\mathcal{E}_{\mathcal{N}, 2V, n}) ,
\end{equation}
where
\begin{eqnarray}\label{OmeganForm}
&& \mathcal{E}_{\mathcal{N}, 2V, n}[w] : = \int_{\Omega_n} |\nabla w(x)|^2 dx - 
2\int_{\Omega_n} V(x) |w(x)|^2 dx ,  \\  
&&  \mbox{Dom}\, (\mathcal{E}_{\mathcal{N}, 2V, n}) =
\left\{w \in W^1_2\left(\Omega_n\right)\cap L_2\left(\Omega_n, V(x)dx\right) : \
\int_{\Omega_n} w\, dx = 0\right\}.  \nonumber
\end{eqnarray}
It is clear that $x \mapsto e^n x$ maps $U_0$ onto $U_n$. So, any estimate for 
$N_- (\mathcal{E}_{\mathcal{N}, 2V, 0})$ that has the right scaling leads to an estimate for
$N_- (\mathcal{E}_{\mathcal{N}, 2V, n})$, and then an estimate for
$N_- (\mathcal{E}_V)$ follows from \eqref{BSest}, \eqref{RadMainEst} and
\eqref{VNn}.

\section{A Grigor'yan-Nadirashvili type estimate}\label{GNsect}

Let
\begin{equation}\label{AnBnN}
\mathcal{B}_n := 
\|V\|^{\rm (av)}_{\mathcal{B}, \Omega_n} , \ \ \
B_n := \left(\int_{\Omega_n} V^p(x) |x|^{2(p -1)}\, dx \right)^{1/p} , \ \ \ p > 1 , \ \ 
n \in \mathbb{Z} 
\end{equation}
(see \eqref{ringsN}).

\begin{theorem}\label{GrigTalk}
There exist  constants $C_7 > 0$ and $c > 0$ such that 
\begin{equation}\label{GrigTalkEst}
N_- (\mathcal{E}_V) \le 1 + 4 \sum_{A_n > 1/4}  \sqrt{A_n}
+ C_7 \sum_{\mathcal{B}_n > c} \mathcal{B}_n  , \ \ \
\forall V\ge 0 
\end{equation}
(see \eqref{ringsR}, \eqref{AnBnR}).
\end{theorem}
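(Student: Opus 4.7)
The plan is to attack the estimate through the Birman-Solomyak decomposition \eqref{BSest}, which reduces the task to controlling the radial part $N_-(\mathcal{E}_{\mathcal{R}, 2V})$ and the non-radial part $N_-(\mathcal{E}_{\mathcal{N}, 2V})$ separately. The radial part is already delivered by \eqref{RadMainEst} and contributes exactly the first sum $1 + 4 \sum_{A_n > 1/4}\sqrt{A_n}$ of \eqref{GrigTalkEst}. Everything therefore rests on establishing
$$
N_-(\mathcal{E}_{\mathcal{N}, 2V}) \le C_7 \sum_{\mathcal{B}_n > c} \mathcal{B}_n .
$$

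My next step is \eqref{VNn}, which bounds this quantity by a sum over the annuli $\Omega_n$ of the counts $N_-(\mathcal{E}_{\mathcal{N}, 2V, n})$ of the mean-zero form \eqref{OmeganForm}. The pivotal reduction is a scaling argument: the dilation $x = e^n y$ maps $\Omega_0$ onto $\Omega_n$, preserves the Dirichlet integral in two dimensions, pulls the potential back to $\tilde V(y) = e^{2n} V(e^n y)$, and preserves the mean-zero constraint. A direct inspection of \eqref{OrlAverage} shows that the averaged Orlicz norm is invariant under exactly this rescaling, i.e. $\|\tilde V\|^{\rm(av)}_{\mathcal{B},\Omega_0} = \|V\|^{\rm(av)}_{\mathcal{B},\Omega_n} = \mathcal{B}_n$. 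Hence it suffices to prove, on the fixed reference annulus $\Omega_0$, the uniform two-sided estimate
$$
N_-(\mathcal{E}_{\mathcal{N}, 2W, 0}) \le C_7\, \|W\|^{\rm(av)}_{\mathcal{B}, \Omega_0} \quad \text{when } \|W\|^{\rm(av)}_{\mathcal{B}, \Omega_0} > c,
$$
together with $N_-(\mathcal{E}_{\mathcal{N}, 2W, 0}) = 0$ when $\|W\|^{\rm(av)}_{\mathcal{B}, \Omega_0} \le c$.

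The zero-count case for small $W$ should come from combining two classical ingredients on $\Omega_0$. The Poincaré inequality on mean-zero $w \in W^1_2(\Omega_0)$ produces an equivalent norm $\|\nabla w\|_{L_2}$, and the two-dimensional Trudinger--Moser embedding gives the continuous inclusion of this space into the Orlicz space $L_{\mathcal{A}}(\Omega_0)$ with $\mathcal{A}$ as in \eqref{thepair}. Applying the Hölder inequality for Orlicz spaces (and Lemma \ref{avequiv} to convert between the gauge and averaged norms, since $\Omega_0$ has fixed finite measure), one obtains
$$
2 \int_{\Omega_0} W |w|^2\, dx \le C\, \|W\|^{\rm(av)}_{\mathcal{B}, \Omega_0}\, \int_{\Omega_0} |\nabla w|^2\, dx,
$$
so that $\mathcal{E}_{\mathcal{N}, 2W, 0}[w] \ge 0$ as soon as $\|W\|^{\rm(av)}_{\mathcal{B}, \Omega_0} \le c := 1/C$, forcing $N_- = 0$. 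For large $\|W\|^{\rm(av)}_{\mathcal{B}, \Omega_0}$ one invokes Solomyak's eigenvalue-counting argument underlying \cite[Theorems 4 and $4'$]{Sol}: decompose $\Omega_0$ dyadically, approximate $W$ by piecewise constants, and sum the elementary one-dimensional counts. The mean-zero constraint replaces the $+1$ of \eqref{U1} by a genuine bound proportional to the Orlicz norm, because Poincaré is now available on every piece that touches the whole annulus.

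The principal obstacle is the last step, namely transplanting Solomyak's Orlicz-norm eigenvalue estimate from the disk to the annulus and sharpening it so that the additive constant disappears on mean-zero functions. Handling the geometry of the annulus requires either a conformal change of variables to a cylinder (where mean-zero and the Poincaré constant behave transparently) or a direct adaptation of the dyadic decomposition, together with a careful check that the implicit constants do not depend on $n$; the latter is guaranteed once the whole argument is phrased in terms of $\|\cdot\|^{\rm(av)}_{\mathcal{B}}$, precisely because this norm is what makes the dilation $x\mapsto e^n x$ an isometry of the problem. Once the reference estimate is in place, summing over $n$ via \eqref{VNn} and combining with \eqref{RadMainEst} through \eqref{BSest} yields \eqref{GrigTalkEst}.
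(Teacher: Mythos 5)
Your proposal follows essentially the same route as the paper: the decomposition \eqref{BSest}, the radial bound \eqref{RadMainEst}, the partition \eqref{VNn} over the annuli $\Omega_n$, and a per-annulus bound $N_-(\mathcal{E}_{\mathcal{N},2V,n}) \le C_7\,\mathcal{B}_n$ whose independence of $n$ rests precisely on the scale invariance of the averaged norm $\|\cdot\|^{\rm (av)}_{\mathcal{B}}$ that you point out; the paper obtains this per-annulus bound simply by citing Theorem $4'$ of \cite{Sol} and notes that terms with $\mathcal{B}_n < 1/C_7$ force $N_-=0$ and drop from the sum. Your sketch of that ingredient (Poincar\'e plus the Trudinger--Moser-type embedding and Orlicz H\"older inequality for the small-norm zero count, Solomyak's covering/counting argument with the additive constant absorbed once $\mathcal{B}_n > c$) is exactly the content of the cited result, and is essentially what the paper itself carries out later in Lemma \ref{CLRl4} for a different norm, so the argument is sound.
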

\begin{proof}
According to Theorem $4'$ in \cite{Sol} (see also Proposition 3 there), there exists 
$C_7 > 0$ such that $N_- (\mathcal{E}_{\mathcal{N}, 2V, n}) \le C_7 \mathcal{B}_n$,
$\forall n \in \mathbb{Z}$ (see \eqref{OmeganForm}). In particular, 
$N_- (\mathcal{E}_{\mathcal{N}, 2V, n}) = 0$ if $\mathcal{B}_n < 1/C_7$, and one can 
drop this term from the sum in \eqref{VNn}. Now it
follows from \eqref{BSest}, \eqref{RadMainEst} and \eqref{VNn} that 
\eqref{GrigTalkEst} holds for any $c < 1/C_7$.
\end{proof}

\begin{remark}\label{GNSol}
{\rm The above result is closely related to the well known estimate from \cite{Sol}:
\begin{equation}\label{SolMainEst}
N_- (\mathcal{E}_V) \le 1 + C_8 \left(\left\|(\mathbf{A}_n)_{n \ge 0}\right\|_{1, \infty}
+ \sum_{n = 0}^\infty \mathbf{B}_n\right)  , \ \ \
\forall V\ge 0 , 
\end{equation}
where $\mathbf{A}_n = A_n$, $\mathbf{B}_n = \mathcal{B}_n$ for $n \in \mathbb{N}$,
\begin{equation}\label{bald0}
\mathbf{A}_0 := \int_{\mathbf{\Omega}_0} V(x) |\ln|x||\, dx , \ \ \ \mathbf{B}_0 = 
\|V\|^{\rm (av)}_{\mathcal{B}, \mathbf{\Omega}_0} ,
\ \ \  \mathbf{\Omega}_0 =  \{x \in \mathbb{R}^2 : \ |x| \le e\} ,
\end{equation}
and 
$$
\left\|(a_n)_{n \ge 0}\right\|_{1, \infty} := \sup_{s > 0} \Big(s\, \mbox{card}\, \{n \ge 0 : \ 
|a_n| > s\}\Big) .
$$
Note that 
$$
\sum_{\{n \le 0 : \ \mathcal{B}_n > c\}} \mathcal{B}_n \le
\sum_{n \le 0} \mathcal{B}_n \le \mathbf{B}_0
$$
(see \cite[Lemma 3]{Sol}), and that
\begin{eqnarray}\label{sqrtweak}
&& \sum_{|a_n| > c} \sqrt{|a_n|} = \sum_{|a_n| > c} \int_0^{|a_n|} \frac12\, s^{-1/2} ds \nonumber \\
&& = \frac12 \int_0^\infty s^{-1/2}\, \mbox{card}\, \{n \ge 0 : \ 
|a_n| > s \ \& \ |a_n| > c\}\, ds \nonumber \\
&& \le \frac12 \int_0^\infty s^{-1/2}\, 
\frac{\left\|(a_n)_{n \ge 0}\right\|_{1, \infty}}{\max\{s, c\}}\, ds \\
&& = \frac{\left\|(a_n)_{n \ge 0}\right\|_{1, \infty}}{2} \left(\int_0^c \frac{s^{-1/2}}{c}\, ds +
\int_c^\infty s^{-3/2} ds\right) = \frac{2}{\sqrt{c}}\, \left\|(a_n)_{n \ge 0}\right\|_{1, \infty} . \nonumber
\end{eqnarray}
Hence
\begin{eqnarray*}
&& \sum_{\{n \in \mathbb{Z}\setminus\{0\} : \ A_n > c\}}  \sqrt{A_n} = 
\sum_{\{n < 0 : \ A_n > c\}}  \sqrt{A_n} +
\sum_{\{n > 0 : \ A_n > c\}}  \sqrt{A_n} \\
&& \le \frac{1}{\sqrt{c}} \sum_{n < 0} A_n + 
\frac{2}{\sqrt{c}}\, \left\|(A_n)_{n > 0}\right\|_{1, \infty}  \le \frac{1}{\sqrt{c}}\, \mathbf{A}_0 + 
\frac{2}{\sqrt{c}}\, \left\|(A_n)_{n > 0}\right\|_{1, \infty}  \\
&& \le \frac{3}{\sqrt{c}}\, \left\|(\mathbf{A}_n)_{n \ge 0}\right\|_{1, \infty} .
\end{eqnarray*}
It follows from the H\"older inequality (see \cite[Theorem 9.3]{KR}) that if $A_0 > c$, then
\begin{eqnarray*}
\sqrt{A_0} < \frac{1}{\sqrt{c}}\, A_0 =   \frac{1}{\sqrt{c}} \int_{U_0} V(x)\, dx 
\le \frac{1}{\sqrt{c}} \int_{\mathbf{\Omega}_0} V(x)\, dx \\
\le \frac{1}{\sqrt{c}} \|V\|_{\mathcal{B}, \mathbf{\Omega}_0} \|1\|_{\mathcal{A}, \mathbf{\Omega}_0} 
\le \mbox{const}\, \|V\|^{\rm (av)}_{\mathcal{B}, \mathbf{\Omega}_0}  = 
\mbox{const}\, \mathbf{B}_0 .
\end{eqnarray*}
So, \eqref{GrigTalkEst} implies \eqref{SolMainEst}.}
\end{remark}

\begin{remark}\label{GNimplied}
{\rm Theorem \ref{GrigTalk} is a slight improvement of Theorem 1.1 in \cite{Grig} where
an analogue of \eqref{GrigTalkEst} was obtained with $B_n$ in place of $\mathcal{B}_n$:
\begin{equation}\label{GrigTalkEst0}
N_- (\mathcal{E}_V) \le 1 + C_7 \sum_{\{n \in \mathbb{Z} : \ A_n > c\}}  \sqrt{A_n}
+ C_7 \sum_{\{n \in \mathbb{Z} : \ B_n > c\}} B_n  , \ \ \
\forall V\ge 0 . 
\end{equation}
Indeed, let 
\begin{eqnarray*}
&& \Delta := \left\{x \in \mathbb{R}^2 : \ \frac{1}{\sqrt{\pi (e^2 - 1)}} < |x| < 
\frac{e}{\sqrt{\pi (e^2 - 1)}}\right\} ,\\
&& R_n := e^n\, \sqrt{\pi (e^2 - 1)}\, , \\
&& \xi_n : \mathbb{R}^2 \to \mathbb{R}^2 , \ \ \xi_n(x) = R_n x , \ \ \ n \in \mathbb{Z} .
\end{eqnarray*}
Then $|\Delta| = 1$, $\xi_n(\Delta) = \Omega_n$ and it follows from \cite[(7)]{Sol} and
\cite[\S 13]{KR} that
\begin{eqnarray*}
&& \mathcal{B}_n = \|V\|^{\rm (av)}_{\mathcal{B}, \Omega_n} = |\Omega_n| 
\|V\circ \xi_n\|_{\mathcal{B}, \Delta} \le C(p) |\Omega_n| \|V\circ \xi_n\|_{L_p(\Delta)} \\
&& = C(p) R_n^2 R_n^{-2/p} \|V\|_{L_p(\Omega_n)} = C(p) R_n^{2 -2/p}
\left(\int_{\Omega_n} V^p(x) dx \right)^{1/p} \\
&& \le C(p) \left(\pi (e^2 - 1)\right)^{1 - 1/p} 
\left(\int_{\{e^{n} < |x| < e^{n + 1}\}} V^p(x) |x|^{2(p -1)}\, dx \right)^{1/p} \\
&& =  C(p) \left(\pi (e^2 - 1)\right)^{1 - 1/p} B_n , \ \ \ p > 1.
\end{eqnarray*}
Note that the optimal constant $M(p)$ in the inequality 
$\|f\|_{(\mathcal{B}, \Delta)} \le M(p) \|f\|_{L_p(\Delta)}$ has the following asymptotics:
\begin{equation}\label{M(p)}
M(p) = \frac1{e(p - 1)} - \frac1e\, \ln\frac{1}{p - 1}  + O(1) \ \ \mbox{ as  } \ \ p \to 1, \ p > 1 .
\end{equation}
Indeed,  an elementary analysis shows that the fraction
$$
\frac{\mathcal{B}(t)}{t^p} = \frac{(1 + t)\ln(1 + t) - t}{t^p}\, , \ \ \ t > 0 , \ \ 1 < p \le 2,
$$
achieves its maximum at a point
$$
t_p = \exp\left(\frac{p}{p - 1} + o(1)\right) \ \ \mbox{ as  } \ \ p \to 1 ,
$$
where its value is 
$$
m(p) = \frac1{e(p - 1)} + O(1) \ \ \mbox{ as  } \ \ p \to 1 .
$$
Hence 
$$
\int_{\Delta} \mathcal{B}\left(\frac{|f(x)|}{m(p)^{1/p}\|f\|_{L_p(\Delta)}}\right)\, dx \le m(p) 
\int_{\Delta}\left|\frac{f(x)}{m(p)^{1/p}\|f\|_{L_p(\Delta)}}\right|^p\, dx = 1 ,
$$
and $\|f\|_{(\mathcal{B}, \Delta)} \le m(p)^{1/p} \|f\|_{L_p(\Delta)}$.
On the other hand, take any subset $\Delta_p \subset \Delta$  
such that $|\Delta_p| = (m(p) t_p^p)^{-1}$, and define the function $f_p$ by $f_p(x) = t_p$
if $x \in \Delta_p$ and $f_p(x) = 0$ if $x \in \Delta\setminus\Delta_p$. Then 
$\|f_p\|_{L_p(\Delta)} = m(p)^{-1/p}$ and 
\begin{eqnarray*}
\int_{\Delta} \mathcal{B}(f_p(x))\, dx = m(p) \int_{\Delta} |f_p(x)|^p\, dx = 1 \ \ \Longrightarrow \\ 
\|f_p\|_{(\mathcal{B}, \Delta)} = 1 =  m(p)^{1/p} \|f_p\|_{L_p(\Delta)} .
\end{eqnarray*}
Hence the optimal constant is
\begin{eqnarray*}
&& M(p) = m(p)^{1/p} = \left(\frac1{e(p - 1)} + O(1)\right)^{1/p} \\ 
&&= \left(\frac1{e(p - 1)} + O(1)\right)
\exp\left(\frac{1 - p}{p}\, \ln\left(\frac1{e(p - 1)} + O(1)\right)\right) \\
&& = \left(\frac1{e(p - 1)} + O(1)\right) \left(1 + (1 - p) \ln\frac{1}{p - 1} + O(p - 1)\right) \\
&& = \frac1{e(p - 1)} - \frac1e\, \ln\frac{1}{p - 1}  + O(1)  \ \ \mbox{ as  } \ \ p \to 1 .
\end{eqnarray*}}
\end{remark}

\section{A Laptev-Netrusov-Solomyak type estimate}

We denote the polar coordinates in $\mathbb{R}^2$ by $(r, \vartheta)$, \ $r \in \mathbb{R}_+$, \
$\vartheta \in \mathbb{S} := (-\pi, \pi]$.
Let  $I \subseteq \mathbb{R}_+$ be a nonempty open interval and let 
$$
\Omega_I := \{x \in \mathbb{R}^2 : \ |x| \in I\} .
$$ 
We denote by $\mathcal{L}_1\left(I, L_{\mathcal{B}}(\mathbb{S})\right)$ the space of measurable
functions $f : \Omega_I \to \mathbb{C}$ such that
\begin{equation}\label{curlyLnorm}
\|f\|_{\mathcal{L}_1\left(I, L_{\mathcal{B}}(\mathbb{S})\right)} := \int_I
\|f(r, \cdot)\|_{\mathcal{B}, \mathbb{S}}\, r dr < +\infty .
\end{equation}
Let
\begin{equation}\label{InDn}
\mathcal{I}_n := (e^n, e^{n + 1}) , \ \ \ \mathcal{D}_n := 
\|V\|_{\mathcal{L}_1\left(\mathcal{I}_n, L_{\mathcal{B}}(\mathbb{S})\right)} , \ \ \ n \in \mathbb{Z} .
\end{equation}

\begin{theorem}\label{LaptNetrSol}
There exist  constants $C_9 > 0$ and $c > 0$ such that 
\begin{equation}\label{LaptNetrSolEst}
N_- (\mathcal{E}_V) \le 1 + 4 \sum_{A_n > 1/4}  \sqrt{A_n}
+ C_9 \sum_{\mathcal{D}_n > c} \mathcal{D}_n\,  , \ \ \
\forall V\ge 0 
\end{equation}
(see \eqref{ringsR}, \eqref{AnBnR}).
\end{theorem}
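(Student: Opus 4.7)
The plan is to mirror the structure of the proof of Theorem~\ref{GrigTalk}, with the radial part handled exactly as before and the non-radial part controlled by a mixed-norm estimate in place of Solomyak's Theorem~$4'$. I would start from the Birman--Solomyak splitting \eqref{BSest} together with \eqref{VNn} and \eqref{RadMainEst}, which immediately produces the radial contribution $1 + 4\sum_{A_n > 1/4}\sqrt{A_n}$ in \eqref{LaptNetrSolEst}. It is therefore enough to prove that there is an absolute constant $C_9 > 0$ such that
$$
N_-(\mathcal{E}_{\mathcal{N}, 2V, n}) \le C_9\, \mathcal{D}_n , \quad n \in \mathbb{Z} ,
$$
for then indices with $C_9 \mathcal{D}_n < 1$ contribute zero and can be dropped from the sum, giving \eqref{LaptNetrSolEst} with $c = 1/C_9$.

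Next I would use scale invariance to reduce everything to a fixed annulus. Under the dilation $x \mapsto e^n x$, the form $\mathcal{E}_{\mathcal{N}, 2V, n}$ on $\Omega_n$ is unitarily equivalent to $\mathcal{E}_{\mathcal{N}, 2\tilde V, 0}$ on $\Omega_0$, where $\tilde V(y) = e^{2n} V(e^n y)$; and the substitution $r = e^n \rho$ in \eqref{curlyLnorm}, together with the positive homogeneity of the Orlicz norm on $\mathbb{S}$, gives $\mathcal{D}_n(V) = \mathcal{D}_0(\tilde V)$. So one only has to establish the single bound
$$
N_-(\mathcal{E}_{\mathcal{N}, 2V, 0}) \le C_9\, \|V\|_{\mathcal{L}_1(\mathcal{I}_0, L_{\mathcal{B}}(\mathbb{S}))}
$$
on $\Omega_0 = \{1 < |x| < e\}$ for arbitrary $V \ge 0$, applied to $w \in W^1_2(\Omega_0)$ with $\int_{\Omega_0} w\,dx = 0$.

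The core of the argument lies in this single-annulus estimate. In polar coordinates I would use the Moser--Trudinger inequality on the circle $\mathbb{S}$: for every $u \in W^1_2(\mathbb{S})$ with $\int_{\mathbb{S}} u\, d\vartheta = 0$ one has $\|u\|_{(\mathcal{A},\mathbb{S})}^2 \le C \|u'\|_{L_2(\mathbb{S})}^2$. Combined with the H\"older inequality for Orlicz spaces from \cite[\S 3.3]{RR}, this yields, at every fixed radius,
$$
\int_{\mathbb{S}} V(r,\vartheta)\,|u(r,\vartheta)|^2\, d\vartheta \le C\, \|V(r,\cdot)\|_{\mathcal{B}, \mathbb{S}}\, \|\partial_\vartheta u(r,\cdot)\|_{L_2(\mathbb{S})}^2 ,
$$
valid for the angularly mean-zero part of $w(r,\cdot)$. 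Integrating in $r\,dr$ over $\mathcal{I}_0$ (where the weight $r$ is bounded between absolute constants) produces a weighted quadratic-form inequality whose coefficient is $\mathcal{D}_0$. Feeding this into the Birman--Schwinger principle exactly as in \cite{Sol, Sol2} gives the desired bound on $N_-$ by a trace-class argument, completing the reduction.

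The principal difficulty lies in Step~3: unlike Solomyak's Theorem~$4'$, where the angular and radial variables are mixed together inside a single Orlicz norm on the annulus, here one must treat the angular direction pointwise and then integrate in the radial direction. The obstacle is that $w$ is only globally mean-zero on $\Omega_0$, not at each fixed $r$, so one must subtract off the radial average $w_{\mathcal{R}}(r)$ at each slice and absorb the resulting radial remainder into a finite-dimensional correction (or into the radial form, since $(I - P)w$ already has vanishing angular average by \eqref{fN0}). Keeping track of this correction, while maintaining the correct scaling and constants so that the final bound is linear in $\mathcal{D}_0$ rather than, say, $\mathcal{D}_0 + \mathcal{D}_0^{1/2}$, is the delicate point of the argument.
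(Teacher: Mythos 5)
Your overall skeleton (the Birman--Solomyak splitting \eqref{BSest}, \eqref{VNn}, \eqref{RadMainEst}, reduction by scaling to one annulus, and dropping the indices with $C_9\mathcal{D}_n<1$) is exactly the paper's route, which reduces everything to the bound $N_-(\mathcal{E}_{\mathcal{N},2V,n})\le C\,\mathcal{D}_n$ of Lemma \ref{CLRl4}. The gap is in your ``core'' single-annulus step. The slicewise inequality $\int_{\mathbb{S}}V(r,\vartheta)|u(r,\vartheta)|^2d\vartheta\le C\|V(r,\cdot)\|_{\mathcal{B},\mathbb{S}}\|\partial_\vartheta u(r,\cdot)\|^2_{L_2(\mathbb{S})}$ is true, but after integrating against $r\,dr$ you get $\int_{\mathcal{I}_0}\|V(r,\cdot)\|_{\mathcal{B},\mathbb{S}}\|\partial_\vartheta u(r,\cdot)\|^2_{L_2(\mathbb{S})}r\,dr$, and to extract the coefficient $\mathcal{D}_0$ you would need $\sup_r\|\partial_\vartheta u(r,\cdot)\|^2_{L_2(\mathbb{S})}\le C\int_{\Omega_0}|\nabla w|^2dx$, which fails for $W^1_2$ functions: the angular derivative on circles is only square integrable in $r$, not uniformly bounded. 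What is actually needed is a trace-type inequality in which the supremum over the radius is taken of an Orlicz norm of the restriction of $w^2$ itself, i.e.\ $\sup_{r}\|w^2(r,\cdot)\|_{\mathcal{A},\mathbb{S}}\le C\|w\|^2_{W^1_2}$; the paper obtains this from the trace theorem combined with the Yudovich--Pohozhaev--Trudinger embedding for $H^{1/2}_2$ (proof of Lemma \ref{CLRl1}), and your circle-wise Moser--Trudinger estimate is not a substitute for it.

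Second, even with the correct quadratic-form inequality $\int_{\Omega_0}V|w|^2dx\le C\mathcal{D}_0\int_{\Omega_0}|\nabla w|^2dx$ for mean-zero $w$ (which the paper does prove and uses), this by itself only gives $N_-=0$ for small $\mathcal{D}_0$; it does not yield the counting bound $N_-\le C_9\mathcal{D}_0$ for large $\mathcal{D}_0$, and the appeal to ``the Birman--Schwinger principle \dots by a trace-class argument'' cannot produce an eigenvalue bound linear in an $L\log L$-type mixed norm. The mechanism the paper (following Solomyak) uses is variational counting via a covering: affine invariance of the mixed norm (Lemma \ref{affine}), its subadditivity over disjoint rectangles (Lemma \ref{subadd}), the per-rectangle estimate for functions with zero mean over the rectangle (Lemma \ref{CLRl1}), and the Besicovitch covering lemma to select at most $n$ rectangles on each of which the local norm equals $\nu n^{-1}\|V\|$ (Lemma \ref{CLRl2}); the codimension count then gives Lemma \ref{CLRl3}, and Lemma \ref{CLRl4} follows by splitting the annulus into two half-annuli mapped onto squares and removing the resulting additive constant through the small-norm positivity. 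Finally, your plan to ``absorb the radial remainder into a finite-dimensional correction'' does not work as stated: for a function that is only globally mean-zero on $\Omega_0$, the slicewise radial average $w_{\mathcal{R}}(r)$ is an infinite-dimensional degree of freedom, not a finite-rank perturbation; the paper sidesteps this precisely by working on the half-annuli (each costing $+1$) and then eliminating the additive constant via the positivity of the form when $\mathcal{D}_0$ is small.
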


\begin{remark}\label{LaptNetrSolRem}
{\rm It is clear that
$$
\sum_{\{n \in \mathbb{Z} : \ \mathcal{D}_n > c\}} \mathcal{D}_n \le 
\sum_{n \in \mathbb{Z}} \mathcal{D}_n = 
\|V\|_{\mathcal{L}_1\left(\mathbb{R}_+, L_{\mathcal{B}}(\mathbb{S})\right)} .
$$
It follows from \eqref{sqrtweak} that
$$
\sum_{\{n \in \mathbb{Z} : \ A_n > c\}}  \sqrt{A_n} \le 
\frac{2}{\sqrt{c}}\, \left\|(A_n)_{n \in \mathbb{Z}}\right\|_{1, \infty} .
$$
Hence \eqref{LaptNetrSolEst} implies
\begin{equation}\label{LaptNetrSolEst2}
N_- (\mathcal{E}_V) \le 1 + C_{10} \left(\left\|(A_n)_{n \in \mathbb{Z}}\right\|_{1, \infty} +
\|V\|_{\mathcal{L}_1\left(\mathbb{R}_+, L_{\mathcal{B}}(\mathbb{S})\right)}\right)  , \ \ \
\forall V\ge 0 ,
\end{equation}
which, in turn, implies the following estimate obtained in \cite{LS} (see also \cite{LN}):
\begin{equation}\label{LaptNetrSolEst3}
N_- (\mathcal{E}_V) \le 1 + C_{11} \left(\left\|(A_n)_{n \in \mathbb{Z}}\right\|_{1, \infty} +
\int_{\mathbb{R}_+} \left(\int_{\mathbb{S}}|V(r, \vartheta)|^p d\vartheta\right)^{1/p} r dr \right) ,
\end{equation}
where $p > 1$ and $C_{11} = O(1/(p - 1))$ as $p \to 1$ (see \eqref{M(p)}).
In fact, the estimate obtained in \cite{LS} has a slightly different form:
\begin{equation}\label{LaptNetrSolEst4}
N_- (\mathcal{E}_V) \le 1 + C_{12} \left(\left\|(A_n)_{n \in \mathbb{Z}}\right\|_{1, \infty} +
\int_{\mathbb{R}_+} 
\left(\int_{\mathbb{S}}|V_{\mathcal{N}}(r, \vartheta)|^p d\vartheta\right)^{1/p} r dr\right) ,
\end{equation}
where
$$
V_{\mathcal{N}}(r, \vartheta) := V(r, \vartheta) - V_{\mathcal{R}}(r) , \ \ \ 
V_{\mathcal{R}}(r) := \frac1{2\pi} \int_{\mathbb{S}} V(r, \vartheta) d\vartheta .
$$
Since 
\begin{eqnarray*}
\left|\int_{\mathbb{R}_+} \left(\int_{\mathbb{S}}|V(r, \vartheta)|^p d\vartheta\right)^{1/p} r dr
- \int_{\mathbb{R}_+} 
\left(\int_{\mathbb{S}}|V_{\mathcal{N}}(r, \vartheta)|^p d\vartheta\right)^{1/p} r dr\right| \\
\le \int_{\mathbb{R}_+} 
\left(\int_{\mathbb{S}}|V_{\mathcal{R}}(r)|^p d\vartheta\right)^{1/p} r dr = 
(2\pi)^{1/p} \int_{\mathbb{R}_+} V_{\mathcal{R}}(r)\, r dr \\
= (2\pi)^{1/p - 1} \int_{\mathbb{R}^2} V(x)\, dx = 
(2\pi)^{1/p - 1} \sum_{n \in \mathbb{Z}} \int_{U_n} V(x)\, dx \\
\le \mbox{const}\, \sum_{n \in \mathbb{Z}} 2^{-|n|} A_n
\le \mbox{const}\, \sup_{n \in \mathbb{Z}}  A_n 
\le \mbox{const}\, \left\|(A_n)_{n \in \mathbb{Z}}\right\|_{1, \infty} ,
\end{eqnarray*}
estimates \eqref{LaptNetrSolEst3} and \eqref{LaptNetrSolEst4} are equivalent to each other. 
An advantage of the latter is that it separates the contribution of the radial part $V_{\mathcal{R}}$
of $V$ from that of the non-radial part $V_{\mathcal{N}}$ (see \cite{LS}). Similarly to the above,
one can easily show that  
\begin{eqnarray}\label{NVequiv}
\left|\|V\|_{\mathcal{L}_1\left(\mathbb{R}_+, L_{\mathcal{B}}(\mathbb{S})\right)} -
\|V_{\mathcal{N}}\|_{\mathcal{L}_1\left(\mathbb{R}_+, L_{\mathcal{B}}(\mathbb{S})\right)}\right|
\le \mbox{const}\, \sup_{n \in \mathbb{Z}}  A_n \\
\le \mbox{const}\, \left\|(A_n)_{n \in \mathbb{Z}}\right\|_{1, \infty} , \nonumber
\end{eqnarray}
and hence \eqref{LaptNetrSolEst2} is equivalent to
\begin{equation}\label{LaptNetrSolEst5}
N_- (\mathcal{E}_V) \le 1 + C_{13} \left(\left\|(A_n)_{n \in \mathbb{Z}}\right\|_{1, \infty} +
\|V_{\mathcal{N}}\|_{\mathcal{L}_1\left(\mathbb{R}_+, L_{\mathcal{B}}(\mathbb{S})\right)}\right)  , \ \ \
\forall V\ge 0 .
\end{equation}
}
\end{remark}

Let $I_1, I_2 \subseteq \mathbb{R}$ be nonempty open intervals. 
We denote by $L_1\left(I_1, L_{\mathcal{B}}(I_2)\right)$ the space of measurable functions
$f : I_1\times I_2 \to \mathbb{C}$ such that
\begin{equation}\label{L1LlogLnorm}
\|f\|_{L_1\left(I_1, L_{\mathcal{B}}(I_2)\right)} := \int_{I_1} 
\|f(x_1, \cdot)\|^{\rm (av)}_{\mathcal{B}, I_2}\, dx_1 < +\infty
\end{equation}
(see \eqref{OrlAverage}).

\begin{lemma}\label{affine}{\rm (Cf. \cite[Lemma 1]{Sol})} Consider an affine transformation
$$
\xi : \mathbb{R}^2 \to \mathbb{R}^2, \ \xi(x) := Ax + x^0, \ \ \ A = \begin{pmatrix}
    R_1  &  0  \\
  0    &  R_2
\end{pmatrix} , \ R_1, R_2 > 0 , \ \ \ x^0 \in \mathbb{R}^2 .
$$
Let $I_1\times I_2 = \xi(J_1\times J_2)$. Then 
\begin{eqnarray*}
\frac{1}{|I_1\times I_2|}\, \|f\|_{L_1\left(I_1, L_{\mathcal{B}}(I_2)\right)} =
\frac{1}{|J_1\times J_2|}\, \|f\circ\xi\|_{L_1\left(J_1, L_{\mathcal{B}}(J_2)\right)}, \\ 
\forall f \in L_1\left(I_1, L_{\mathcal{B}}(I_2)\right) . 
\end{eqnarray*}
\end{lemma}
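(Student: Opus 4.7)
The plan is to reduce the claim to the one-dimensional scaling identity
$$
\|g\|^{\rm (av)}_{\mathcal{B}, I_2} \;=\; R_2\, \|\tilde g\|^{\rm (av)}_{\mathcal{B}, J_2},
\quad \tilde g(y_2) := g(R_2 y_2 + x^0_2),
$$
and then to apply Fubini together with a change of variables in the outer integral. The whole point of the $^{\rm (av)}$-norm defined in \eqref{OrlAverage} is that its normalization $\int \mathcal{A}(g)\, d\mu \le \mu(\Omega)$ scales correctly under dilations; this is what forces a factor exactly equal to the length ratio to come out, and what eventually produces the homogeneity of degree $0$ claimed in the lemma.

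First I would fix $x_1 \in I_1$, write $x_1 = R_1 y_1 + x^0_1$ with $y_1 \in J_1$, and analyze $\|f(x_1,\cdot)\|^{\rm (av)}_{\mathcal{B}, I_2}$. Substituting $x_2 = R_2 y_2 + x^0_2$, one sees that the admissible set of test functions $g$ on $I_2$ is in bijection with the admissible set of $\tilde g$ on $J_2$: the constraint $\int_{I_2} \mathcal{A}(g)\, dx_2 \le |I_2|$ becomes $R_2 \int_{J_2} \mathcal{A}(\tilde g)\, dy_2 \le R_2|J_2|$, i.e.\ $\int_{J_2} \mathcal{A}(\tilde g)\, dy_2 \le |J_2|$, which is precisely the constraint defining the $^{\rm (av)}$-norm on $J_2$. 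Meanwhile the integral transforms as
$$
\int_{I_2} f(x_1,x_2)\, g(x_2)\, dx_2 \;=\; R_2 \int_{J_2} (f\circ\xi)(y_1,y_2)\, \tilde g(y_2)\, dy_2,
$$
so taking suprema gives $\|f(x_1,\cdot)\|^{\rm (av)}_{\mathcal{B}, I_2} = R_2\, \|(f\circ\xi)(y_1,\cdot)\|^{\rm (av)}_{\mathcal{B}, J_2}$.

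Next I would integrate this identity in $x_1 \in I_1$ and change variables $x_1 = R_1 y_1 + x^0_1$, obtaining
$$
\|f\|_{L_1(I_1, L_{\mathcal{B}}(I_2))} \;=\; R_2 \int_{I_1} \|(f\circ\xi)(y_1,\cdot)\|^{\rm (av)}_{\mathcal{B}, J_2}\, dx_1 \;=\; R_1 R_2\, \|f\circ\xi\|_{L_1(J_1, L_{\mathcal{B}}(J_2))}.
$$
Since $|I_1\times I_2| = R_1 R_2\, |J_1\times J_2|$, dividing both sides by $|I_1\times I_2|$ yields the stated equality.

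The only place where care is really needed is checking that the $^{\rm (av)}$-normalization transports cleanly under the diagonal affine change of variables; this is what fails for the ordinary Orlicz or Luxemburg norms and is the whole reason the $^{\rm (av)}$-norm was introduced in \cite{Sol}. Once that one-dimensional scaling step is verified, the rest is bookkeeping with Fubini and the Jacobian $R_1 R_2$ of $\xi$, and there is no real obstacle.
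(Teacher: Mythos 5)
Your argument is correct and follows essentially the same route as the paper: the paper's proof is the same chain of equalities, changing variables in the outer $L_1$ integral (factor $R_1$) and using the one-dimensional scaling identity $\|f(x_1,\cdot)\|^{\rm (av)}_{\mathcal{B}, I_2} = R_2\,\|(f\circ\xi)(y_1,\cdot)\|^{\rm (av)}_{\mathcal{B}, J_2}$, which it simply cites from Lemma 1 of \cite{Sol} whereas you verify it directly from the definition \eqref{OrlAverage} by transporting the constraint $\int_{I_2}\mathcal{A}(g)\,dx_2\le |I_2|$ to $\int_{J_2}\mathcal{A}(\tilde g)\,dy_2\le |J_2|$. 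The only cosmetic point is that no Fubini argument is actually needed, just the one-dimensional change of variables in the outer integral.
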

\begin{proof}
\begin{eqnarray*}
\frac{1}{|I_1\times I_2|}\, \|f\|_{L_1\left(I_1, L_{\mathcal{B}}(I_2)\right)} =
\frac{1}{|I_1\times I_2|}\, \int_{I_1} 
\|f(y_1, \cdot)\|^{\rm (av)}_{\mathcal{B}, I_2}\, dy_1 \\
= \frac{R_1}{|I_1\times I_2|}\,  \int_{J_1} 
\|f(R_1x_1 + x^0_1, \cdot)\|^{\rm (av)}_{\mathcal{B}, I_2}\, dx_1  \\
= \frac{R_1 R_2}{|I_1\times I_2|}\,  \int_{J_1} 
\|f(R_1x_1 + x^0_1, R_2\cdot + x^0_2)\|^{\rm (av)}_{\mathcal{B}, J_2}\, dx_1\\
 = \frac{1}{|J_1\times J_2|}\, \|f\circ\xi\|_{L_1\left(J_1, L_{\mathcal{B}}(J_2)\right)}
\end{eqnarray*}
(see \cite[Lemma 1]{Sol}).
\end{proof}

\begin{lemma}\label{subadd}{\rm (Cf. \cite[Lemma 3]{Sol})} 
Let rectangles $I_{1, k}\times I_{2, k}$, $k = 1, \dots, n$ be pairwise disjoint subsets of 
$I_1\times I_2$. Then
\begin{equation}\label{subaddest}
\sum_{k = 1}^n \|f\|_{L_1\left(I_{1, k}, L_{\mathcal{B}}(I_{2, k})\right)} \le 
\|f\|_{L_1\left(I_1, L_{\mathcal{B}}(I_2)\right)} , \ \ \ \forall f \in L_1\left(I_1, L_{\mathcal{B}}(I_2)\right) .
\end{equation}
\end{lemma}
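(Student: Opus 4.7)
The plan is to reduce \eqref{subaddest} to a one-variable subadditivity of the averaged Orlicz norm over disjoint subintervals, applied slicewise in the $x_1$ variable, then integrate. The key geometric fact supplied by the hypothesis is that for every fixed $x_1\in I_1$, the subcollection $\{I_{2,k}:x_1\in I_{1,k}\}$ consists of \emph{pairwise disjoint} subintervals of $I_2$; indeed, if $x_1\in I_{1,k}\cap I_{1,j}$ and $y\in I_{2,k}\cap I_{2,j}$, the point $(x_1,y)$ would lie in the intersection of the rectangles, contradicting disjointness.

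First, I would prove the following auxiliary one-variable statement, which is essentially immediate from \eqref{OrlAverage}: if $J_1,\dots,J_m$ are pairwise disjoint measurable subsets of an interval $J$, then
\begin{equation*}
\sum_{i=1}^m \|g\|^{\rm (av)}_{\mathcal{B}, J_i} \le \|g\|^{\rm (av)}_{\mathcal{B}, J}, \qquad \forall g.
\end{equation*}
To see this, pick (nearly) extremal admissible test functions $h_i$ on $J_i$, i.e.\ $\int_{J_i}\mathcal{A}(h_i)\,dy\le |J_i|$, and glue them into $h:=\sum_i h_i\mathbf{1}_{J_i}$ on $J$ (extended by zero off $\bigcup_i J_i$). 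Since $\mathcal{A}(0)=0$ and the $J_i$ are disjoint,
\begin{equation*}
\int_J \mathcal{A}(h)\,dy = \sum_i \int_{J_i}\mathcal{A}(h_i)\,dy \le \sum_i |J_i| \le |J|,
\end{equation*}
so $h$ is admissible in the supremum defining $\|g\|^{\rm (av)}_{\mathcal{B},J}$. Because $\int_J g h\,dy = \sum_i\int_{J_i}g h_i\,dy$, taking the supremum over each $h_i$ independently yields the claim.

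Applied with $g=f(x_1,\cdot)$, $J=I_2$, and $J_i$ ranging over the (disjoint) $I_{2,k}$ with $x_1\in I_{1,k}$, this gives
\begin{equation*}
\sum_{k:\,x_1\in I_{1,k}} \|f(x_1,\cdot)\|^{\rm (av)}_{\mathcal{B}, I_{2,k}} \le \|f(x_1,\cdot)\|^{\rm (av)}_{\mathcal{B}, I_2}, \qquad x_1\in I_1.
\end{equation*}
Integrating in $x_1$ over $I_1$ and interchanging summation with integration via indicator functions $\mathbf{1}_{I_{1,k}}$, the left-hand side becomes $\sum_k\int_{I_{1,k}}\|f(x_1,\cdot)\|^{\rm (av)}_{\mathcal{B},I_{2,k}}\,dx_1 = \sum_k\|f\|_{L_1(I_{1,k},L_{\mathcal{B}}(I_{2,k}))}$, while the right-hand side is $\|f\|_{L_1(I_1,L_{\mathcal{B}}(I_2))}$, giving \eqref{subaddest}.

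I do not expect a genuine obstacle; the only mildly delicate point is measurability of the $h_k$'s jointly in $(x_1,y)$, which is circumvented by working at the level of the defining suprema (each $\|f(x_1,\cdot)\|^{\rm (av)}_{\mathcal{B},I_{2,k}}$ is pulled inside the integral in $x_1$ via the sliced inequality), so no simultaneous measurable selection is required.
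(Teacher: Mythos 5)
Your proof is correct and takes essentially the same route as the paper: the paper merely replaces your pointwise-in-$x_1$ slicing by a partition of $I_1$ into elementary intervals determined by the endpoints of the $I_{1,k}$, and it quotes the one-variable subadditivity of $\|\cdot\|^{\rm (av)}_{\mathcal{B}}$ from \cite[Lemma 3]{Sol} instead of reproving it by gluing admissible test functions. The key point — disjointness of the rectangles forces the relevant $I_{2,k}$ to be pairwise disjoint on each slice, after which one applies the one-dimensional inequality and integrates in $x_1$ — is identical in both arguments.
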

\begin{proof}
Let us label the endpoints of the intervals $I_1$ and $I_{1, k}$, $k = 1, \dots, n$ in the
increasing order $a_1 < a_2 < \cdots < a_N$. Here $N \le 2(n + 1)$ and $N$ may be strictly less
than $2(n + 1)$ as some of these endpoints may coincide. Then
\begin{eqnarray*}
&& \sum_{k = 1}^n \|f\|_{L_1\left(I_{1, k}, L_{\mathcal{B}}(I_{2, k})\right)}  =
\sum_{k = 1}^n \int_{I_{1, k}} 
\|f(x_1, \cdot)\|^{\rm (av)}_{\mathcal{B}, I_{2, k}}\, dx_1 \\
&& =
\sum_{k = 1}^n \sum_{(a_j, a_{j + 1}) \subseteq I_{1, k}} 
\int_{a_j}^{a_{j + 1}} \|f(x_1, \cdot)\|^{\rm (av)}_{\mathcal{B}, I_{2, k}}\, dx_1 \\
&& = \sum_{j = 1}^{N - 1} \int_{a_j}^{a_{j + 1}} \sum_{\{k : \ I_{1, k} \supseteq (a_j, a_{j + 1})\}}
\|f(x_1, \cdot)\|^{\rm (av)}_{\mathcal{B}, I_{2, k}}\, dx_1 \\
&& \le \sum_{j = 1}^{N - 1} \int_{a_j}^{a_{j + 1}} \|f(x_1, \cdot)\|^{\rm (av)}_{\mathcal{B}, I_2}\, dx_1
= \int_{I_1} \|f(x_1, \cdot)\|^{\rm (av)}_{\mathcal{B}, I_2}\, dx_1
= \|f\|_{L_1\left(I_1, L_{\mathcal{B}}(I_2)\right)} . 
\end{eqnarray*}
The inequality above follows from Lemma 3 in \cite{Sol} and from the implication
$$
I_{1, k} \supseteq (a_j, a_{j + 1}) , \ I_{1, m} \supseteq (a_j, a_{j + 1}) , \ k \not= m \ \ 
\Longrightarrow \ \ I_{2, k}\cap I_{2, m} = \emptyset ,
$$
which holds because the rectangles $I_{1, k}\times I_{2, k}$, $k = 1, \dots, n$ are pairwise disjoint.
\end{proof}

Let $Q := (0, 1)^2$ and $\mathbb{I} := (0, 1)$. 
We will also use the following notation:
$$
w_S := \frac{1}{|S|} \int_S w(x)\, dx ,
$$
where $S \subseteq \mathbb{R}^2$ is a set of a finite positive two dimensional 
Lebesgue measure $|S|$. 

\begin{lemma}\label{CLRl1}{\rm (Cf. \cite[Lemma 2]{Sol})}
There exists $C_{14} > 0$ such that for any nonempty open intervals
$I_1, I_2 \subseteq \mathbb{R}$ of lengths $R_1$ and $R_2$ respectively, any 
$w \in W^1_2(I_1\times I_2)\cap C\left(\overline{I_1\times I_2}\right)$ with 
$w_{I_1\times I_2} = 0$, and 
any $V \in L_1\left(I_1, L_{\mathcal{B}}(I_2)\right)$, $V \ge 0$
the following inequality holds:
\begin{eqnarray}\label{CLRl1eq}
&& \int_{I_1\times I_2}  V(x) |w(x)|^2\, dx \nonumber \\
&& \le C_{14} \max\left\{\frac{R_1}{R_2}, \frac{R_2}{R_1}\right\}
\|V\|_{L_1\left(I_1, L_{\mathcal{B}}(I_2)\right)} \int_{I_1\times I_2} |\nabla w(x)|^2\, dx .
\end{eqnarray}
\end{lemma}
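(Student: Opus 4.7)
My approach is in two main stages: a scaling reduction to the unit square, and then the unit-square inequality proved via Orlicz--H\"older slicing combined with a mean-zero decomposition.

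\textbf{Step 1 (Scaling to the unit square).} I would apply the affine map $\xi(x) := (R_1 x_1 + x_1^0, R_2 x_2 + x_2^0)$ from $Q := (0,1)^2$ onto $I_1\times I_2$, and set $\tilde w := w\circ\xi$, $\tilde V := V\circ\xi$. Routine change-of-variable computations give
$$
\int_{I_1\times I_2} V|w|^2 \, dy = R_1 R_2 \int_Q \tilde V |\tilde w|^2 \, dx,
$$
$$
\int_{I_1\times I_2} |\nabla w|^2 \, dy = \frac{R_2}{R_1}\int_Q|\partial_{x_1}\tilde w|^2\, dx + \frac{R_1}{R_2}\int_Q|\partial_{x_2}\tilde w|^2\, dx \ge \frac{\int_Q |\nabla\tilde w|^2\, dx}{\max\{R_1/R_2,R_2/R_1\}},
$$
while Lemma \ref{affine} gives $\|V\|_{L_1(I_1, L_\mathcal{B}(I_2))} = R_1 R_2 \|\tilde V\|_{L_1(\mathbb{I}, L_\mathcal{B}(\mathbb{I}))}$, and the mean-zero condition is preserved. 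It therefore suffices to prove the unit-square version
$$
\int_Q \tilde V |\tilde w|^2 \, dx \le C \|\tilde V\|_{L_1(\mathbb{I}, L_\mathcal{B}(\mathbb{I}))} \int_Q |\nabla\tilde w|^2\, dx
$$
for $\tilde w \in W^1_2(Q)\cap C(\bar Q)$ with $\tilde w_Q = 0$; the factor $\max\{R_1/R_2, R_2/R_1\}$ is inserted via the anisotropic rescaling above.

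\textbf{Step 2 (Orlicz--H\"older on slices and mean-zero decomposition).} On $Q$, decompose $\tilde w = w_r(x_1) + w_n(x_1, x_2)$, where $w_r(x_1) := \int_\mathbb{I}\tilde w(x_1, x_2)\, dx_2$ and $w_n := \tilde w - w_r$. Since $\tilde w_Q = 0$, $w_r$ is a 1D function with $\int_\mathbb{I} w_r\, dx_1 = 0$, so 1D Sobolev--Poincar\'e gives $\|w_r\|^2_{L_\infty(\mathbb{I})} \le \|w_r'\|^2_{L_2(\mathbb{I})} \le \|\partial_{x_1}\tilde w\|^2_{L_2(Q)}$, while $w_n(x_1,\cdot)$ has zero mean on each $x_1$-slice with $\|w_n(x_1,\cdot)\|^2_{L_\infty(\mathbb{I})} \le \|\partial_{x_2}\tilde w(x_1,\cdot)\|^2_{L_2(\mathbb{I})}$. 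Applying the Orlicz--H\"older inequality on each $x_1$-slice (recalling $|\mathbb{I}|=1$, so $\|\cdot\|^{(\mathrm{av})}_{\mathcal{B},\mathbb{I}} = \|\cdot\|_{\mathcal{B},\mathbb{I}}$) gives
$$
\int_\mathbb{I} \tilde V(x_1, x_2) |\tilde w(x_1, x_2)|^2 \, dx_2 \le 2 \|\tilde V(x_1, \cdot)\|_{\mathcal{B}, \mathbb{I}} \|\, |\tilde w(x_1, \cdot)|^2\,\|_{\mathcal{A}, \mathbb{I}}.
$$
The $w_r$ contribution is controlled cleanly by $\|w_r\|^2_{L_\infty}\cdot \|\tilde V\|_{L_1(L_\mathcal{B})}$ (using $L_\mathcal{B}(\mathbb{I})\hookrightarrow L_1(\mathbb{I})$), yielding a term of the desired form $C\|\nabla\tilde w\|^2_{L_2(Q)}\|\tilde V\|_{L_1(L_\mathcal{B})}$.

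\textbf{Step 3 (Expected main obstacle).} The hard part will be bounding $\int_Q\tilde V|w_n|^2\,dx$. After applying the slice Poincar\'e bound and $\|\,|f|^2\,\|_{\mathcal{A},\mathbb{I}}\le C\|f\|^2_{L_\infty(\mathbb{I})}$, one arrives at
$$
\int_Q\tilde V|w_n|^2\, dx\le C\int_\mathbb{I}\|\tilde V(x_1,\cdot)\|_{\mathcal{B},\mathbb{I}}\|\partial_{x_2}\tilde w(x_1,\cdot)\|^2_{L_2(\mathbb{I})}\, dx_1,
$$
which does \emph{not} split into $\|\tilde V\|_{L_1(L_\mathcal{B})}\|\nabla\tilde w\|^2$ by H\"older alone, since both factors depend on $x_1$. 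The resolution, I expect, exploits the $L_1(L_\mathcal{B})$--$L_\infty(L_\mathcal{A})$ duality together with the \emph{symmetric} $x_2$-oriented decomposition $\tilde w = w_r'(x_2) + w_n'(x_1,x_2)$ (where $w_r'(x_2) := \int_\mathbb{I}\tilde w(x_1,x_2)\, dx_1$ is mean-zero 1D) combined with the Orlicz Minkowski bound $\|V_2\|_{\mathcal{B},\mathbb{I}} \le \|\tilde V\|_{L_1(L_\mathcal{B})}$ for $V_2(x_2):=\int_\mathbb{I}\tilde V(x_1,x_2)\, dx_1$. Balancing the two axis-aligned decompositions and invoking the resulting slice-wise trace-type exponential integrability estimate is the most delicate step, where the mean-zero hypothesis $\tilde w_Q = 0$ is essentially used.
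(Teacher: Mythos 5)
Your Step 1 (the anisotropic affine rescaling via Lemma \ref{affine}) and the use of the $L_1(L_{\mathcal{B}})$--$L_\infty(L_{\mathcal{A}})$ slicewise H\"older duality are exactly the right frame, and they match the paper's reduction. But the heart of the lemma is precisely the obstacle you flag in Step 3, and your proposal does not resolve it. The bound you arrive at,
\begin{equation*}
\int_Q\tilde V|w_n|^2\, dx\le C\int_{\mathbb{I}}\|\tilde V(x_1,\cdot)\|_{\mathcal{B},\mathbb{I}}\,\|\partial_{x_2}\tilde w(x_1,\cdot)\|^2_{L_2(\mathbb{I})}\, dx_1 ,
\end{equation*}
is genuinely not controllable by $\|\tilde V\|_{L_1(\mathbb{I},L_{\mathcal{B}}(\mathbb{I}))}\int_Q|\nabla\tilde w|^2dx$: both $x_1\mapsto\|\tilde V(x_1,\cdot)\|_{\mathcal{B},\mathbb{I}}$ and $x_1\mapsto\|\partial_{x_2}\tilde w(x_1,\cdot)\|^2_{L_2(\mathbb{I})}$ are merely $L_1$ in $x_1$, and a product of two $L_1$ functions is not bounded by the product of their $L_1$ norms (think of $V$ concentrating near a single vertical slice on which the slice energy of $w$ is large). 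The symmetric $x_2$-oriented decomposition does not repair this, because the same concentration scenario can be arranged with respect to either axis; no purely slicewise 1D Sobolev bound (sup on a slice controlled by that slice's own energy) can close the argument.

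What is missing is the paper's key two-dimensional ingredient: a trace inequality of Yudovich--Pohozhaev--Trudinger type giving
\begin{equation*}
\sup_{x_1\in\mathbb{I}}\left\|\,|\tilde w(x_1,\cdot)|^2\right\|_{\mathcal{A},\mathbb{I}}\le C\,\|\tilde w\|^2_{W^1_2(Q)} ,
\end{equation*}
i.e.\ the restriction of a $W^1_2(Q)$ function to any vertical line lies in $H^{1/2}_2$, which embeds into the exponential Orlicz class $L_{\mathcal{A}}$, with a bound by the \emph{full} 2D Sobolev norm, uniformly in the slice. With this, the slicewise H\"older inequality immediately gives $\int_Q \tilde V|\tilde w|^2dx\le C\|\tilde V\|_{L_1(\mathbb{I},L_{\mathcal{B}}(\mathbb{I}))}\|\tilde w\|^2_{W^1_2(Q)}$, and no decomposition of $\tilde w$ is needed at all; the mean-zero hypothesis $\tilde w_Q=0$ enters only afterwards, through the Poincar\'e inequality, to replace $\|\tilde w\|^2_{W^1_2(Q)}$ by $\int_Q|\nabla\tilde w|^2dx$ (not in any delicate balancing step). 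You allude to a ``slice-wise trace-type exponential integrability estimate'' in your last sentence, but you neither state it nor indicate a proof, and your actual estimates use only the 1D embedding on each slice, which is strictly weaker; so as written the proof has a genuine gap at its central step.
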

\begin{proof}
Let us start with the case $I_1 = I_2 = \mathbb{I}$, $R_1= R_2 = 1$. 
There exists $C_{15} > 0$ such that
$$
\sup_{x_1 \in \mathbb{I}} \left\|w^2(x_1, \cdot)\right\|_{\mathcal{A}, \mathbb{I}} \le C_{15} \|w\|^2_{W^1_2(Q)}, \ \ \ \forall w \in 
W^1_2(Q)\cap C\left(\overline{Q}\right)
$$
(see \eqref{thepair}). This can be proved by applying the trace theorem (see, e.g., 
\cite[Theorems 4.32 and 7.53]{Ad}) and then using the Yudovich--Pohozhaev--Trudinger
embedding theorem for $H^{1/2}_ 2$ (see \cite{Gri}, \cite{Peet} and \cite[Lemma 1.2.4]{MS}, 
\cite[8.25]{Ad}) or, in one go, by applying a trace inequality of the 
Yudovich--Pohozhaev--Trudinger type (see \cite[Corollary 11.8/2]{Maz}; a sharp result can be found
in \cite{Cia}).

Next, we use the Poincar\'e inequality (see, e.g., \cite[1.11.1]{Maz}): there exists $C_{16} > 0$
such that
\begin{eqnarray*}
\int_Q (w(x) - w_Q)^2\, dx =
\inf_{a \in \mathbb{R}} \int_Q (w(x) - a)^2\, dx \le C_{16} \int_Q |\nabla w(x)|^2\, dx, \\ 
\forall w \in W^1_2(Q) .
\end{eqnarray*}
Hence
$$
\sup_{x_1 \in \mathbb{I}} \left\|w^2(x_1, \cdot)\right\|_{\mathcal{A}, \mathbb{I}}  
\le C_{14} \int_Q |\nabla w(x)|^2\, dx, \ \ \ \forall w \in 
W^1_2(Q)\cap C\left(\overline{Q}\right), \ w_Q = 0 ,
$$
where $C_{14} = C_{15}(C_{16} + 1)$. Now, the H\"older inequality (see \cite[Theorem 9.3]{KR})
implies
\begin{eqnarray*}
\int_Q  V(x) |w(x)|^2\, dx \le 
\|V\|_{L_1\left(\mathbb{I}, L_{\mathcal{B}}(\mathbb{I})\right)} 
\sup_{x_1 \in \mathbb{I}} \left\|w^2(x_1, \cdot)\right\|_{\mathcal{A}, \mathbb{I}} \\
\le C_{14} \|V\|_{L_1\left(\mathbb{I}, L_{\mathcal{B}}(\mathbb{I})\right)}   
\int_Q |\nabla w(x)|^2\, dx ,
\end{eqnarray*}
i.e. \eqref{CLRl1eq} holds for $I_1 = I_2 = \mathbb{I}$. 

Consider now arbitrary intervals $I_1, I_2 \subseteq \mathbb{R}$. Then
$I_1\times I_2 = \xi(Q)$, where $\xi$ is the affine mapping from
Lemma \ref{affine}, and \eqref{CLRl1eq} can be derived from the above inequality with 
the help of Lemma \ref{affine} and the inequality
$$
\int_Q |\nabla (w\circ\xi)(x)|^2\, dx \le \max\left\{\frac{R_1}{R_2}, \frac{R_2}{R_1}\right\}
\int_{I_1\times I_2} |\nabla w(y)|^2\, dy
$$
(see the the proof of Lemma 2 in \cite{Sol}).
\end{proof}

\begin{lemma}\label{CLRl2}{\rm (Cf. \cite[Theorem 1]{Sol})}
For any $V \in L_1\left(\mathbb{I}, L_{\mathcal{B}}(\mathbb{I})\right)$, $V \ge 0$ and any 
$n \in \mathbb{N}$  there exists a finite cover of $Q$ by
rectangles $S_k = I_{1, k}\times I_{2, k}$, $k = 1, \dots, n_0$  such that $n_0 \le n$ and
\begin{equation}\label{CLRl2eq}
\int_Q  V(x) |w(x)|^2\, dx \le  
C_{17} n^{-1}\|V\|_{L_1\left(\mathbb{I}, L_{\mathcal{B}}(\mathbb{I})\right)} 
\int_{Q} |\nabla w(x)|^2\, dx 
\end{equation}
for all
$w \in W^1_2(Q)\cap C\left(\overline{Q}\right)$ with $w_{S_k} = 0$, $k = 1, \dots, n_0$, where 
the constant $C_{17}$ does not depend on $V$.
\end{lemma}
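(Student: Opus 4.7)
Define the subadditive set function $\mu(R) := \|V\|_{L_1(I_1, L_{\mathcal{B}}(I_2))}$ for any rectangle $R = I_1 \times I_2 \subseteq Q$, and set $N := \mu(Q) = \|V\|_{L_1(\mathbb{I}, L_{\mathcal{B}}(\mathbb{I}))}$. Write $\rho(R) := \max\{R_1/R_2, R_2/R_1\}$ for the aspect ratio. Note that $\mu$ is \emph{additive} under a split of $R$ in the $x_1$-direction, and only \emph{subadditive} in the $x_2$-direction by Lemma~\ref{subadd}. In either direction, the relevant one-sided Orlicz norms depend continuously on the split location.

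The plan is to construct a partition $S_1,\dots,S_{n_0}$ of $Q$ with $n_0 \le n$ rectangles satisfying
$$
\max_{1 \le k \le n_0}\, \rho(S_k)\,\mu(S_k) \;\le\; C\, N/n.
$$
Once this is done, Lemma~\ref{CLRl1} applied to each $S_k$ (permissible because $w_{S_k} = 0$) together with the fact that the $S_k$ form a partition of $Q$ yields
$$
\int_Q V(x)\,|w(x)|^2\, dx = \sum_{k=1}^{n_0} \int_{S_k} V\,|w|^2\, dx \le C_{14} \max_k \rho(S_k)\mu(S_k) \int_Q |\nabla w|^2\, dx \le C_{17}\, n^{-1}\, N \int_Q |\nabla w|^2\, dx,
$$
with $C_{17} = C_{14} C$. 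This gives \eqref{CLRl2eq}.

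The partition is produced by the adaptive dyadic bisection procedure of Solomyak \cite[Theorem~1]{Sol}. Starting from $\{Q\}$, we perform binary splits: at each step we choose a rectangle $R$ and bisect it into two sub-rectangles $R', R''$ so that (i) $\mu(R'), \mu(R'') \le \mu(R)/2$, and (ii) the aspect ratios $\rho(R'), \rho(R'')$ remain bounded by a universal constant $C_0$. For (i), if we split in the $x_1$-direction, exact mass-halving is achieved at the additive mid-mass point; if we split in the $x_2$-direction, the subadditivity of Lemma~\ref{subadd} together with the continuity of the Orlicz norm in the split parameter guarantees a split point at which both halves have $\mu \le \mu(R)/2$ (indeed, if $\mu_1(c), \mu_2(c)$ denote the masses of the two pieces as the split $c$ moves, $\mu_1$ is continuous non-decreasing from $0$, $\mu_2$ is continuous non-increasing to $0$, and $\mu_1+\mu_2 \le \mu(R)$, so at the crossing point both are $\le \mu(R)/2$). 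For (ii), we select the axis of the split according to the current aspect ratio. A balanced tree of depth $\lceil \log_2 n \rceil$ produces $n_0 \le n$ leaves, each with $\mu(S_k) \le N/n$ and $\rho(S_k) \le C_0$, and hence $\rho(S_k)\mu(S_k) \le C_0 N/n$.

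The main obstacle is the simultaneous control of mass and aspect ratio at each bisection: a mass-balanced split in a single fixed direction can produce arbitrarily thin slivers, while purely geometric halving surrenders all mass control. The resolution, which is the technical heart of \cite[Theorem~1]{Sol}, is the flexibility to choose the splitting axis adaptively and to combine additivity in $x_1$ with the continuity-plus-subadditivity argument in $x_2$, so that at every bisection a split can be found meeting both (i) and (ii) with only a universal constant loss in $\rho$.
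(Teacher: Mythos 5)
There is a genuine gap, and it sits exactly at the step you defer to citation. Your construction requires, at every stage, a bisection of a rectangle $R$ into two rectangles $R', R''$ with \emph{both} $\mu(R'),\mu(R'')\le \mu(R)/2$ and $\rho(R'),\rho(R'')\le C_0$. Such a split need not exist: take $V$ to be a very concentrated bump near the corner $(0,0)$ of $Q$, carrying essentially all of the norm. Any straight cut that does not pass through the bump leaves one half with essentially the full mass, so (i) forces the cut to lie within the $\varepsilon$-neighbourhood of the corner, and then the piece adjacent to the corner has aspect ratio of order $1/\varepsilon$, which can be made arbitrarily large already at the first bisection, whichever axis you choose. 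So the "adaptive choice of axis'' does not resolve the tension between mass control and shape control, and the sentence asserting that a split meeting (i) and (ii) always exists "with only a universal constant loss in $\rho$'' is precisely the unproved (and, as stated, false) heart of the argument. A smaller point: a balanced tree of depth $\lceil\log_2 n\rceil$ has $2^{\lceil\log_2 n\rceil}$ leaves, which can exceed $n$ (e.g.\ $n=5$ gives $8$ leaves), so even the cardinality bound $n_0\le n$ needs adjusting; this is fixable by constants, unlike the bisection issue.

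The paper's proof avoids partitions altogether. For each $x\in\overline{Q}$ one chooses, by continuity (\cite[Lemma 4]{Sol}), a closed square $\Delta_x$ centred at $x$ with $\|V\|_{L_1(I_{1,x},L_{\mathcal{B}}(I_{2,x}))}=\nu n^{-1}\|V\|_{L_1(\mathbb{I},L_{\mathcal{B}}(\mathbb{I}))}$, where $I_{1,x}\times I_{2,x}=\Delta_x\cap\overline{Q}$; because the centre lies in $\overline{Q}$, these intersections automatically have aspect ratio between $1/2$ and $2$, so no shape/mass conflict ever arises. The Besicovitch covering lemma extracts a subcover $\{S_k\}$ that splits into $\nu$ families of pairwise disjoint rectangles; Lemma \ref{subadd} applied within each family gives $\operatorname{card}\Xi_l\le n/\nu$, hence $n_0\le n$, and the overlap of the cover is paid for only through the factor $\nu$ when summing $\int_{S_k}|\nabla w|^2$, yielding $C_{17}=2C_{14}\nu^2$. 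Note also that the lemma only asks for a \emph{cover}, not a partition, which is what makes this route available. If you want to salvage a partition-based argument you would need a genuinely different mechanism (e.g.\ controlling the number of elements by the total mass rather than forcing mass-halving per cut, or allowing non-rectangular elements), since simultaneous per-step mass-halving and bounded eccentricity is unobtainable.
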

\begin{proof}
According to the Besicovitch covering lemma (see, e.g., \cite[Ch. I. Theorem 1.1]{Guz}),
there exists a constant $\nu \in \mathbb{N}$ such that any cover $\{\Delta_x\}_{x \in \overline{Q}}$
of $\overline{Q}$ by closed squares $\Delta_x$ centered at $x$ has a countable or a finite subcover
that can be split into $\nu$ families in such a way that any two squares belonging to the same
family are disjoint. 

We can assume that $n > \nu$, as otherwise one could take $n_0 = 1$, $S_k = Q$ and
get \eqref{CLRl2eq} with $C_{17} \ge \nu C_{14}$ directly from \eqref{CLRl1eq}.
For any $x \in \overline{Q}$ there exists a closed square $\Delta_x = I^0_{1, x}\times I^0_{2, x}$ 
centered at $x$ such that 
$$
\|V\|_{L_1\left(I_{1, x}, L_{\mathcal{B}}(I_{2, x})\right)}  = 
\nu n^{-1}\|V\|_{L_1\left(\mathbb{I}, L_{\mathcal{B}}(\mathbb{I})\right)} ,
$$
where 
$$
I_{i, x} := I^0_{i, x}\cap [0, 1] , \ i = 1, 2 , \ \ \mbox{ i.e. } \ \
I_{1, x}\times I_{2, x}  = \Delta_x \cap\overline{Q} 
$$
(see \cite[Lemma 4]{Sol}). Let $\{\Delta_{x_k}\}$ be the subcover from the Besicovitch covering 
lemma and let $S_k = I_{1, k}\times I_{2, k} := I_{1, x_k}\times I_{2, x_k}  = 
\Delta_{x_k} \cap\overline{Q}$. Then $\Xi := \{S_k\}$ is also a cover of $\overline{Q}$ and, 
like $\{\Delta_{x_k}\}$, it can be split into $\nu$ families $\Xi_l$, $l = 1, \dots, \nu$, 
consisting of pairwise disjoint elements. Lemma \ref{subadd} implies
$$
\nu n^{-1}\|V\|_{L_1\left(\mathbb{I}, L_{\mathcal{B}}(\mathbb{I})\right)}\, \mbox{card}\, \Xi_l 
= \sum_{S_k \in \Xi_l} \|V\|_{L_1\left(I_{1, k}, L_{\mathcal{B}}(I_{2, k})\right)} \le
\|V\|_{L_1\left(\mathbb{I}, L_{\mathcal{B}}(\mathbb{I})\right)} .
$$
Hence $\mbox{card}\, \Xi_l \le n/\nu$ and
$$
n_0 := \mbox{card}\, \Xi =  \sum_{l = 1}^\nu \mbox{card}\, \Xi_l \le \nu n/\nu = n .
$$

Take any $w \in W^1_2(Q)\cap C\left(\overline{Q}\right)$ with $w_{S_k} = 0$, $k = 1, \dots, n_0$. 
Since the centre of the square $\Delta_{x_k}$ belongs to $\overline{Q}$, the ratio of the side
lengths of the rectangle $S_k = \Delta_{x_k} \cap\overline{Q}$ is between $1/2$ and $2$.
Hence it follows from Lemma \ref{CLRl1} that
\begin{eqnarray*}
&& \int_Q  V(x) |w(x)|^2\, dx \le \sum_{k = 1}^{n_0} \int_{S_k}  V(x) |w(x)|^2\, dx  \\
&& \le 2 C_{14} \sum_{k = 1}^{n_0} \|V\|_{L_1\left(I_{1, k}, L_{\mathcal{B}}(I_{2, k})\right)}
\int_{S_k} |\nabla w(x)|^2\, dx  \\
&& =  2 C_{14} \nu n^{-1}\|V\|_{L_1\left(\mathbb{I}, L_{\mathcal{B}}(\mathbb{I})\right)}
\sum_{k = 1}^{n_0} 
\int_{S_k} |\nabla w(x)|^2\, dx  \\
&& =  2 C_{14} \nu n^{-1}\|V\|_{L_1\left(\mathbb{I}, L_{\mathcal{B}}(\mathbb{I})\right)}
\sum_{l = 1}^\nu \sum_{S_k \in \Xi_l} 
\int_{S_k} |\nabla w(x)|^2\, dx\\
&& =  2 C_{14} \nu n^{-1}\|V\|_{L_1\left(\mathbb{I}, L_{\mathcal{B}}(\mathbb{I})\right)}
\sum_{l = 1}^\nu \int_Q |\nabla w(x)|^2\, dx \\
&& = C_{17} n^{-1}\|V\|_{L_1\left(\mathbb{I}, L_{\mathcal{B}}(\mathbb{I})\right)}
\int_Q |\nabla w(x)|^2\, dx ,
\end{eqnarray*}
where $C_{17} := 2 C_{14} \nu^2$.
\end{proof}

Let
\begin{eqnarray*}
& \mathcal{E}_{V, 0}[w] : = \int_{Q} |\nabla w(x)|^2 dx - 
\int_{Q} V(x) |w(x)|^2 dx , & \\  
& \mbox{Dom}\, (\mathcal{E}_{V, 0}) =
W^1_2\left(Q\right)\cap L_2\left(Q, V(x)dx\right) . & 
\end{eqnarray*}

\begin{lemma}\label{CLRl3}{\rm (Cf. \cite[Theorem 4]{Sol})}
$$
N_- (\mathcal{E}_{V, 0}) \le C_{17} 
\|V\|_{L_1\left(\mathbb{I}, L_{\mathcal{B}}(\mathbb{I})\right)} + 1 , \ \ \ \forall V \ge 0 ,
$$
where $C_{17}$ is the same as in Lemma \ref{CLRl2}.
\end{lemma}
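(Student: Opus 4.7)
The plan is to combine Lemma \ref{CLRl2} with the variational principle encoded in \eqref{N-}. Denote $M := \|V\|_{L_1(\mathbb{I}, L_{\mathcal{B}}(\mathbb{I}))}$. If $M = 0$, then $V = 0$ almost everywhere, so $\mathcal{E}_{V,0}[w] = \int_Q |\nabla w|^2\, dx \ge 0$, whence $N_-(\mathcal{E}_{V,0}) = 0$ and the estimate is trivial. Assume from now on that $M > 0$.

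Choose the integer $n := \lceil C_{17} M \rceil$, so that $n \ge 1$ and $C_{17} n^{-1} M \le 1$. Apply Lemma \ref{CLRl2} with this $n$ to obtain a finite cover $\{S_k\}_{k=1}^{n_0}$ of $Q$ by rectangles, with $n_0 \le n$, such that
$$
\int_Q V(x) |w(x)|^2\, dx \le C_{17} n^{-1} M \int_Q |\nabla w(x)|^2\, dx \le \int_Q |\nabla w(x)|^2\, dx
$$
whenever $w \in W^1_2(Q) \cap C(\overline{Q})$ and $w_{S_k} = 0$ for every $k$. Since $W^1_2(Q) \cap C(\overline{Q})$ is dense in $W^1_2(Q)$ and the averaging functionals $w \mapsto w_{S_k}$ are bounded linear on $W^1_2(Q)$, this inequality extends to every $w$ in the subspace
$$
\mathcal{L} := \{w \in \mathrm{Dom}(\mathcal{E}_{V,0}) : w_{S_k} = 0 \text{ for } k = 1, \dots, n_0\},
$$
which has codimension at most $n_0$ in $\mathrm{Dom}(\mathcal{E}_{V,0})$.

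Consequently $\mathcal{E}_{V,0}[w] \ge 0$ for every $w \in \mathcal{L}$. By the definition \eqref{N-}, any subspace of $\mathrm{Dom}(\mathcal{E}_{V,0})$ on which $\mathcal{E}_{V,0}$ is strictly negative on nonzero elements intersects $\mathcal{L}$ only at $0$, and hence has dimension at most the codimension of $\mathcal{L}$, i.e.\ at most $n_0$. Therefore
$$
N_-(\mathcal{E}_{V,0}) \le n_0 \le n = \lceil C_{17} M \rceil \le C_{17} M + 1,
$$
which is the desired bound.

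The only delicate point is the extension of the pointwise inequality of Lemma \ref{CLRl2} from $W^1_2(Q) \cap C(\overline{Q})$ to the whole of $\mathrm{Dom}(\mathcal{E}_{V,0}) = W^1_2(Q) \cap L_2(Q, V dx)$; this is a routine density argument using the continuity of $w \mapsto w_{S_k}$ on $W^1_2(Q)$ and Fatou's lemma on the left-hand side of the inequality, but it is the step most likely to hide a subtlety. Once this is in hand, the variational principle delivers the conclusion immediately.
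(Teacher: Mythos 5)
Your proof is correct and follows essentially the same route as the paper: apply Lemma \ref{CLRl2} with $n$ roughly $\lceil C_{17}\|V\|_{L_1(\mathbb{I},L_{\mathcal{B}}(\mathbb{I}))}\rceil$, observe that the form is nonnegative on the codimension-$\le n_0$ subspace cut out by the conditions $w_{S_k}=0$, and conclude via the variational principle (the paper takes $n=[C_{17}\|V\|]+1$ instead of the ceiling, which is immaterial). Your explicit handling of $V=0$ and of the density/Fatou extension from $W^1_2(Q)\cap C(\overline{Q})$ to the full form domain is a point the paper leaves implicit, and it is fine as you describe it.
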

\begin{proof}
Let $n = \left[C_{17} \|V\|_{L_1\left(\mathbb{I}, L_{\mathcal{B}}(\mathbb{I})\right)}\right] + 1$
in Lemma \ref{CLRl2}. Take any
linear subspace $\mathcal{L} \subset \mbox{Dom}\, (\mathcal{E}_{V, 0})$
such that 
$$
\dim \mathcal{L} > \left[C_{17} \|V\|_{L_1\left(\mathbb{I}, L_{\mathcal{B}}(\mathbb{I})\right)}\right] + 1 .
$$
Since $n_0 \le n$, there exists $w \in \mathcal{L}\setminus\{0\}$ such that 
$w_{S_k} = 0$, $k = 1, \dots, n_0$. Then
\begin{eqnarray*}
\mathcal{E}_{V, 0}[w]  &=& \int_{Q} |\nabla w(x)|^2 dx - 
\int_{Q} V(x) |w(x)|^2 dx \\
&\ge& \int_{Q} |\nabla w(x)|^2 dx - 
\frac{C_{17} \|V\|_{L_1\left(\mathbb{I}, L_{\mathcal{B}}(\mathbb{I})\right)}}{\left[C_{17} 
\|V\|_{L_1\left(\mathbb{I}, L_{\mathcal{B}}(\mathbb{I})\right)}\right] + 1}\,
\int_{Q} |\nabla w(x)|^2 dx \\
&\ge& \int_{Q} |\nabla w(x)|^2 dx - \int_{Q} |\nabla w(x)|^2 dx = 0 .
\end{eqnarray*}
Hence
$$
N_- (\mathcal{E}_V) \le \left[C_{17} 
\|V\|_{L_1\left(\mathbb{I}, L_{\mathcal{B}}(\mathbb{I})\right)}\right] + 1 \le
C_{17} \|V\|_{L_1\left(\mathbb{I}, L_{\mathcal{B}}(\mathbb{I})\right)} + 1.
$$
\end{proof}

Let $R > 0$, \ $I(R) := (R, eR)$, \
$\Omega(R) := \{x \in \mathbb{R}^2 : \ R < |x| < e R\}$ and
\begin{eqnarray*}
& \mathcal{E}^R_V[w] : = \int_{\Omega(R)} |\nabla w(x)|^2 dx - 
\int_{\Omega(R)} V(x) |w(x)|^2 dx , & \\  
& \mbox{Dom}\, (\mathcal{E}^R_V) =
\left\{w \in W^1_2\left(\Omega(R)\right)\cap L_2\left(\Omega(R), V(x)dx\right) : \
w_{\Omega(R)} = 0\right\}. & 
\end{eqnarray*}

\begin{lemma}\label{CLRl4}{\rm (Cf. \cite[Theorem $4'$]{Sol})}
There exists $C_{18} > 0$ such that
\begin{equation}\label{CLRl4est}
N_- (\mathcal{E}^R_V) \le C_{18} 
\|V\|_{\mathcal{L}_1\left(I(R), L_{\mathcal{B}}(\mathbb{S})\right)} , \ \ \ \forall V \ge 0,
\ \ \ \forall R > 0 .
\end{equation}
\end{lemma}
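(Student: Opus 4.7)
\textbf{Proof plan for Lemma \ref{CLRl4}.} The plan is to reduce to a rectangular problem via scaling and a conformal change of variables, and then to combine a rectangle analog of Lemma \ref{CLRl3} with a weighted Poincar\'e inequality, splitting into two cases to absorb the ``$+1$'' that the former produces. First, the similarity $y = x/R$, with $\tilde w(y) := w(Ry)$ and $\tilde V(y) := R^2 V(Ry)$, preserves the form $\mathcal{E}^R_V$, the constraint $w_{\Omega(R)} = 0$, and the norm $\|V\|_{\mathcal{L}_1(I(R), L_{\mathcal{B}}(\mathbb{S}))}$, so it suffices to consider $R = 1$. The log-polar substitution $t := \ln r \in (0, 1)$, $u(t, \vartheta) := w(e^t, \vartheta)$, $G(t, \vartheta) := e^{2t} V(e^t, \vartheta)$, together with the 2D conformal invariance of the Dirichlet integral, identifies $\mathcal{E}^1_V$ with the form
\[
\int_C \bigl(|u_t|^2 + |u_\vartheta|^2\bigr)\,dt\,d\vartheta - \int_C G|u|^2\,dt\,d\vartheta
\]
on the rectangle $C := (0,1)\times(-\pi,\pi)$, while $\|V\|_{\mathcal{L}_1(I(1), L_{\mathcal{B}}(\mathbb{S}))} = \int_0^1 \|G(t,\cdot)\|_{\mathcal{B},\mathbb{S}}\,dt$ is equivalent to $\|G\|_{L_1((0,1), L_{\mathcal{B}}((-\pi,\pi)))}$ by Lemma \ref{avequiv} (with $\mu(\mathbb{S}) = 2\pi$), and the constraint $w_{\Omega(1)}=0$ becomes the \emph{weighted} mean-zero condition $\int_C u\, e^{2t}\,dt\,d\vartheta = 0$. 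Since $W^1_2$ on the cylinder embeds into $W^1_2(C)$, it suffices to bound $N_-$ on this weighted-mean-zero subspace of $W^1_2(C)\cap L_2(C, G\,dt\,d\vartheta)$.

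The key analytic step is the weighted Poincar\'e-type inequality
\[
\int_C G|u|^2\,dt\,d\vartheta \le C_\ast\, \|G\|_{L_1((0,1), L_{\mathcal{B}}((-\pi,\pi)))} \int_C |\nabla u|^2\,dt\,d\vartheta
\]
valid for every $u \in W^1_2(C)$ with $\int_C u\, e^{2t} = 0$. To prove it, set $\bar u := u_C$; then $u - \bar u$ has unweighted mean zero on $C$, so Lemma \ref{CLRl1} (whose aspect-ratio factor here is bounded by $2\pi$) gives $\int_C G|u - \bar u|^2 \le 2\pi C_{14}\, \|G\|_{L_1(L_{\mathcal{B}})} \int_C |\nabla u|^2$. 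The constraint forces $\bar u \int_C e^{2t} = -\int_C (u - \bar u) e^{2t}$, so Cauchy--Schwarz and the classical Poincar\'e inequality on $C$ yield $|\bar u| \le c_P\, \|\nabla u\|_{L_2(C)}$. Combining these with the elementary bound $\int_C G \le \|G\|_{L_1(L_{\mathcal{B}})}$ (take $g \equiv 1$ in \eqref{OrlAverage}, admissible since $\int_{\mathbb{S}}\mathcal{A}(1)\,d\vartheta = 2\pi(e-2) < 2\pi$) through the estimate $|u|^2 \le 2|u - \bar u|^2 + 2|\bar u|^2$ completes the argument.

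The estimate \eqref{CLRl4est} then follows by splitting into two cases. If $C_\ast \|G\|_{L_1(L_{\mathcal{B}})} \le 1$ the Poincar\'e inequality forces $\mathcal{E}^R_V[w] \ge 0$ on $\mbox{Dom}\,(\mathcal{E}^R_V)$, hence $N_-(\mathcal{E}^R_V) = 0$. Otherwise, a rectangle analog of Lemma \ref{CLRl3} --- obtained by repeating the Besicovitch-based proofs of Lemmas \ref{CLRl2} and \ref{CLRl3} verbatim with $C$ in place of $Q$ (the only change being that the aspect-ratio factor in Lemma \ref{CLRl1} is bounded by $2\pi$, which only affects the constant) --- yields $N_- \le C_{17}^C\|G\|_{L_1(L_{\mathcal{B}})} + 1$ on the full space $W^1_2(C)\cap L_2(C, G\,dt\,d\vartheta)$, hence on our subspace; and the ``$+1$'' is then dominated by the hypothesis $C_\ast\|G\|_{L_1(L_{\mathcal{B}})} > 1$ of this case, giving $N_-(\mathcal{E}^R_V) \le (C_{17}^C + C_\ast) \|G\|_{L_1(L_{\mathcal{B}})}$. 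Reverting to $\|V\|_{\mathcal{L}_1(I(R), L_{\mathcal{B}}(\mathbb{S}))}$ via Lemma \ref{avequiv} yields \eqref{CLRl4est} for a suitable $C_{18}$. The main technical point is the weighted Poincar\'e inequality; its subtlety, namely the non-constant weight $e^{2t}$, is benign because the weight is bounded in $[1, e^2]$ on $(0,1)$.
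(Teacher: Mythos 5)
Your argument is correct, and it reaches \eqref{CLRl4est} by a genuinely different implementation of the same two-step strategy (a Solomyak-type bound with an additive constant, plus a smallness criterion that kills the constant). The paper splits the annulus $\Omega(1)$ into two half-annuli, maps each onto the unit square $Q$ by explicit polar-coordinate diffeomorphisms (paying comparison constants $C_{19}, C_{20}$), and then quotes Lemma \ref{CLRl3} as stated, getting a ``$+2$''; the constant is then removed by running the trace-embedding/Poincar\'e/H\"older argument of Lemma \ref{CLRl1} directly on the annulus with the constraint $w_{\Omega(1)}=0$. You instead use the log-polar change of variables on the whole annulus, which is exact by conformal invariance (no comparison constants and no half-annulus splitting), but at the price of (a) re-running the Besicovitch argument of Lemmas \ref{CLRl2}--\ref{CLRl3} on the $1\times 2\pi$ rectangle, which indeed only changes the aspect-ratio constant in Lemma \ref{CLRl1} to at most $2\pi$ (the intersections of the covering squares with $\overline{C}$ have side ratio at most $2\pi$, and \cite[Lemma 4]{Sol} and Lemma \ref{subadd} are insensitive to the shape of the ambient rectangle), and (b) handling the fact that the mean-zero constraint becomes weighted, $\int_C u\,e^{2t}=0$; your reduction of this to Lemma \ref{CLRl1} plus the classical Poincar\'e inequality, using that $e^{2t}\in[1,e^2]$ on $(0,1)$ and that $\int_C G \le \|G\|_{L_1((0,1),L_{\mathcal{B}}((-\pi,\pi)))}$ (via $g\equiv 1$, $\mathcal{A}(1)=e-2<1$), is sound. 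The absorption of the ``$+1$'' by the case distinction on $C_\ast\|G\|$, the equivalence of the averaged and plain angular norms through Lemma \ref{avequiv}, and the scaling $y=x/R$, $\tilde V(y)=R^2V(Ry)$ (which the paper also uses) are all handled correctly; the only points passed over in silence are the usual density arguments needed because Lemmas \ref{CLRl1}--\ref{CLRl3} are stated for continuous $w$, but the paper glosses over the same issue. In short: the paper's route lets one reuse Lemma \ref{CLRl3} verbatim, yours trades that for an exact conformal reduction and a self-contained weighted Poincar\'e step; both are valid proofs of the lemma.
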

\begin{proof}
We start with the case $R = 1$. Let $\mathbb{S}_+ := (0, \pi)$, $\mathbb{S}_- := (-\pi, 0)$,
\begin{eqnarray*}
& \Omega_\pm := \{(r \cos\vartheta, r \sin\vartheta) \in \mathbb{R}^2 : \
1 < r < e, \ \vartheta \in \mathbb{S}_\pm\} & \\
& \mathcal{E}^\pm_{V}[w] : = \int_{\Omega_\pm} |\nabla w(x)|^2 dx - 
\int_{\Omega_\pm} V(x) |w(x)|^2 dx , & \\  
& \mbox{Dom}\, (\mathcal{E}^\pm_V) =
W^1_2\left(\Omega_\pm\right)\cap L_2\left(\Omega_\pm, V(x)dx\right) . & 
\end{eqnarray*}
Then by the variational principle,
\begin{equation}\label{addit2}
N_- (\mathcal{E}^1_V) \le N_- (\mathcal{E}^+_V) + N_- (\mathcal{E}^-_V) .
\end{equation}
(see, e.g., \cite[Lemma 3.5]{Grig}). Consider the diffeomorphisms 
$\varphi_\pm : Q \to \Omega_\pm$,
$$
\varphi_\pm(y_1, y_2) = \left(\big(1 + (e - 1)y_1\big) \cos(\pm\pi y_2) , 
\big(1 + (e - 1)y_1\big) \sin(\pm\pi y_2)\right) .
$$
The polar coordinates of $\varphi_\pm(y_1, y_2)$ are
$r = 1 + (e - 1)y_1$, \ $\vartheta = \pm\pi y_2$.

Let
$$
\hat{V}_\pm(y) := V(\varphi_\pm(y)) , \ \ \ \tilde{w}(y) := w(\varphi_\pm(y)) , \ 
w \in \mbox{Dom}\, (\mathcal{E}^\pm_V) .
$$
There exist absolute constants $C_{19} > 0$ and $C_{20} > 0$ such that
\begin{eqnarray*}
&& \int_{\Omega_\pm} |\nabla w(x)|^2 dx \ge 
\frac1{C_{19}} \int_Q |\nabla \tilde{w}(y)|^2 dy , \\
&& \int_{\Omega_\pm} V(x) |w(x)|^2 dx \le C_{20} \int_Q \hat{V}_\pm(y) |\tilde{w}(y)|^2 dy .
\end{eqnarray*}
Then Lemma \ref{CLRl3} implies 
$$
N_- (\mathcal{E}^\pm_V) \le N_- (\mathcal{E}_{C_{19} C_{20} \hat{V}_\pm, 0}) \le
C_{17} C_{19} C_{20}  
\left\|\hat{V}_\pm\right\|_{L_1\left(\mathbb{I}, L_{\mathcal{B}}(\mathbb{I})\right)} + 1 .
$$
Using \cite[Lemma 1]{Sol} one gets
$$
\left\|\hat{V}_\pm\right\|_{L_1\left(\mathbb{I}, L_{\mathcal{B}}(\mathbb{I})\right)} =
\int_{\mathbb{I}} 
\left\|\hat{V}_\pm(y_1, \cdot)\right\|^{\rm (av)}_{\mathcal{B}, \mathbb{I}}\, dy_1
= \frac{1}{\pi(e - 1)} \int_1^e \|V(r, \cdot)\|^{\rm (av)}_{\mathcal{B}, \mathbb{S}_\pm}\, dr .
$$
Now it follows from \eqref{addit2}, Lemma \ref{avequiv} and \cite[Lemma 3]{Sol} that
\begin{eqnarray}\label{plus2}
N_- (\mathcal{E}^1_V) \le C_{21} \int_1^e 
\left(\|V(r, \cdot)\|^{\rm (av)}_{\mathcal{B}, \mathbb{S}_+}
+ \|V(r, \cdot)\|^{\rm (av)}_{\mathcal{B}, \mathbb{S}_-}\right) dr + 2 \nonumber \\
\le C_{21} \int_1^e 
\|V(r, \cdot)\|^{\rm (av)}_{\mathcal{B}, \mathbb{S}}\, dr + 2 \le 
C_{21} \int_1^e 
\|V(r, \cdot)\|^{\rm (av)}_{\mathcal{B}, \mathbb{S}}\, r dr + 2 \\
\le 2\pi C_{21} \|V\|_{\mathcal{L}_1\left(I(1), L_{\mathcal{B}}(\mathbb{S})\right)} + 2 , \nonumber
\end{eqnarray}
where $C_{21} = \frac{C_{17} C_{19} C_{20}}{\pi(e - 1)}$.

Using the Yudovich--Pohozhaev--Trudinger
embedding theorem, the Poincar\'e inequality and the H\"older inequality for Orlicz spaces
as in the proof of Lemma \ref{CLRl1} one can prove the existence of a constant
$C_{22} > 0$ such that
\begin{eqnarray*}
\int_{\Omega(1)} V(x) |w(x)|^2 dx \le C_{22} 
\|V\|_{\mathcal{L}_1\left(I(1), L_{\mathcal{B}}(\mathbb{S})\right)}
\int_{\Omega(1)} |\nabla w(x)|^2 dx , \\ 
\forall w \in \mbox{Dom}\, (\mathcal{E}^1_V) .
\end{eqnarray*}
If $\|V\|_{\mathcal{L}_1\left(I(1), L_{\mathcal{B}}(\mathbb{S})\right)} \le 1/C_{22}$, then
$$
\mathcal{E}^1_V[w] \ge 0 , \ \ \  \forall w \in \mbox{Dom}\, (\mathcal{E}^1_V) ,
$$
i.e. $N_- (\mathcal{E}^1_V) = 0$. Combining this 
with \eqref{plus2} one gets the existence of a constant $C_{18} > 0$ for which 
\eqref{CLRl4est} holds in the case $R = 1$. The case of a general $R$ is reduced to $R = 1$
in the standard way with the help of the scaling $x \mapsto Rx$ (cf. the proof of Theorem 
$4'$ in \cite{Sol}).
\end{proof}

\begin{proof}[Proof of Theorem \ref{LaptNetrSol}] 
The theorem is proved in the same way as Theorem \ref{GrigTalk} (see \eqref{BSest}, 
\eqref{RadMainEst} and \eqref{VNn}). One only needs to use
Lemma \ref{CLRl4} instead of Theorem $4'$ of \cite{Sol}.
\end{proof}

\begin{remark}\label{Laptev}
{\rm It has been observed by A. Laptev that interchanging the r\^oles of the variables $y_1$ and
$y_2$ in the proof of Lemma \ref{CLRl4} one can get an analogue of \eqref{CLRl4est} where the
$L_{\mathcal{B}}$ norm is taken with respect to the radial variable, while the $L_1$ norm is taken
with respect to the angular one. Then using the Birman-Solomyak method exactly as in the proofs of
Theorems \ref{GrigTalk} and  \ref{LaptNetrSol}, one shows the existence of  constants
$C_{23} > 0$ and $c > 0$ such that 
\begin{equation}\label{LaptevEst}
N_- (\mathcal{E}_V) \le 1 + 4 \sum_{A_n > 1/4}  \sqrt{A_n}
+ C_{23} \sum_{\mathcal{G}_n > c} \mathcal{G}_n\,  , \ \ \
\forall V\ge 0 ,
\end{equation}
where 
\begin{equation}\label{Gn}
\mathcal{G}_n := e^n 
\int_{\mathbb{S}} \|V(\cdot, \vartheta)\|^{\rm (av)}_{\mathcal{B}, \mathcal{I}_n}\, d\vartheta\, ,
\ \ \ n \in \mathbb{Z} 
\end{equation}
(cf. \eqref{curlyLnorm}, \eqref{InDn}), and the factor $e^n$ in front of the integral ensures the
correct scaling under the mapping $x \mapsto e^n x$.}
\end{remark}

\section{Comparison of various estimates}\label{Compar}

Let us start with comparing \eqref{clCLRest} and \eqref{SolMainEst}. Note first of all that 
\eqref{SolMainEst} is equivalent to
\begin{equation}\label{SolMainEst'}
N_- (\mathcal{E}_V) \le 1 + C_{24} \left(\left\|(\mathbf{A}_n)_{n \ge 1}\right\|_{1, \infty}
+ \sum_{n = 0}^\infty \mathbf{B}_n\right)  , \ \ \
\forall V\ge 0 , 
\end{equation}
where we have dropped the term $\mathbf{A}_0$. Indeed, the H\"older inequality 
(see \cite[\S 3.3, (17)]{RR}) implies
\begin{eqnarray*}
&&\mathbf{A}_0 = \int_{|x| \le e} V(x) |\ln|x||\, dx \le  \|V\|_{\mathcal{B}, \mathbf{\Omega}_0} 
\||\ln|\cdot||\|_{(\mathcal{A}, \mathbf{\Omega}_0)} , \\
&& \int_{\mathbf{\Omega}_0} \mathcal{A}\left( |\ln|x||\right)\, dx \le
\int_{|x| \le 1} e^{\ln \frac{1}{|x|}}\, dx + \int_{1 < |x| \le e} e^{\ln |x|}\, dx\\
&& = \int_{|x| \le 1} \frac{1}{|x|}\, dx + \int_{1 < |x| \le e}  |x|\, dx \\
&& = \int_{-\pi}^\pi \int_0^1 1\, dr d\vartheta + \int_{-\pi}^\pi \int_1^e r^2\, dr d\vartheta
= \frac{2\pi}3\, (e^3 + 2) =: C_{25} .
\end{eqnarray*}
Hence
$$
\||\ln|\cdot||\|_{(\mathcal{A}, \mathbf{\Omega}_0)} \le C_{25}
$$
(see \eqref{LuxNormImpl}) and
\begin{equation}\label{A0B0}
\mathbf{A}_0 \le C_{25} \|V\|_{\mathcal{B}, \mathbf{\Omega}_0} \le
C_{25} \|V\|^{\rm (av)}_{\mathcal{B}, \mathbf{\Omega}_0} = C_{25}
\mathbf{B}_0 .
\end{equation}

It is clear that
\begin{eqnarray}\label{logcomp}
\left\|(\mathbf{A}_n)_{n \ge 1}\right\|_{1, \infty} \le \left\|(\mathbf{A}_n)_{n \ge 1}\right\|_1 =
\sum_{n = 1}^\infty \int_{U_n} V(x) \ln|x|\, dx \nonumber \\
=  \int_{|x| > e} V(x) \ln|x|\, dx \le
\int_{\mathbb{R}^2} V(x) \ln(1 + |x|)\, dx .
\end{eqnarray}
It is also easy to see that if 
\begin{equation}\label{log3}
V(x) = \left\{\begin{array}{cc}
 \frac{1}{2\pi |x|^2 (\ln|x|)^2 \ln\ln|x|}\, ,   &  |x| \ge e^2 ,  \\ \\
   0 ,  &   x| < e^2 ,
\end{array}
\right.
\end{equation}
then $\mathbf{A}_n = \ln\left(1 + \frac1{n -1}\right) = 
\frac1n + O\left(n^{-2}\right)$, $n \ge 2$, and the left-hand side of \eqref{logcomp}
is finite, while the right-hand side is infinite.

On the other hand,
\begin{eqnarray}\label{Orlcomp}
\sum_{n = 0}^\infty \mathbf{B}_n \ge \|V\|_{\mathcal{B}, \mathbf{\Omega}_0} + 
\sum_{n = 1}^\infty \|V\|_{\mathcal{B}, \Omega_n} \ge 
\|V\|_{\mathcal{B}, \mathbb{R}^2} ,
\end{eqnarray}
where the last estimate follows from the triangle inequality applied to 
$f = f\chi_{\mathbf{\Omega}_0} + \sum_{n = 1}^\infty f\chi_{\Omega_n}$ with
$\chi_{\Omega}(x) := 1$ for $x \in \Omega$ and 
$\chi_{\Omega}(x) := 0$ for $x \not\in \Omega$. It is not difficult to show that there exist a $V$ for
which the left-hand side of \eqref{Orlcomp} is infinite while the middle term is finite, and a $V$ for
which   the middle term is infinite while the right-hand side is finite. Indeed, let 
$V(x) = \frac{1}{|x|^2}$ for $|x| \ge e$ and $V(x) = 0$ for $|x| < e$. Then using \cite[(7)]{Sol}
and the notation from Remark \ref{GNimplied} one gets
\begin{eqnarray*}
\sum_{n = 1}^\infty \mathbf{B}_n = \sum_{n = 1}^\infty \mathcal{B}_n \ge 
\sum_{n = 1}^\infty e^{-2(n + 1)} \|1\|^{\rm (av)}_{\mathcal{B}, \Omega_n}
= \sum_{n = 1}^\infty e^{-2(n + 1)} |\Omega_n| \|1\|_{\mathcal{B}, \Delta} \\
= \|1\|_{\mathcal{B}, \Delta} \sum_{n = 1}^\infty \pi \left(1 - \frac1{e^2}\right) = +\infty .
\end{eqnarray*}
On the other hand, \cite[(9.11)]{KR} and Lemma \ref{ABelem} imply 
\begin{eqnarray*}
\sum_{n = 1}^\infty \|V\|_{\mathcal{B}, \Omega_n} \le \sum_{n = 1}^\infty e^{-2n}
\|1\|_{\mathcal{B}, \Omega_n} = \sum_{n = 1}^\infty e^{-2n} |\Omega_n| 
\mathcal{A}^{-1}\left(\frac1{|\Omega_n| }\right) \\
\le \sum_{n = 1}^\infty e^{-2n} \sqrt{2|\Omega_n|} =  \sum_{n = 1}^\infty e^{-n} 
\sqrt{2\pi (e^2 - 1)} < +\infty .
\end{eqnarray*}

Take now $V(x) = \frac{1}{|x| \ln|x|}$ for $|x| \ge e$ and $V(x) = 0$ for $|x| < e$. Then
\begin{eqnarray*}
\sum_{n = 1}^\infty \|V\|_{\mathcal{B}, \Omega_n} \ge \sum_{n = 1}^\infty \frac{e^{-(n + 1)}}{n + 1}\,
\|1\|_{\mathcal{B}, \Omega_n} = \sum_{n = 1}^\infty \frac{e^{-(n + 1)}}{n + 1}\, |\Omega_n| 
\mathcal{A}^{-1}\left(\frac1{|\Omega_n| }\right) \\
\ge \sum_{n = 1}^\infty \frac{e^{-(n + 1)}}{n + 1}\,  \sqrt{\frac2e\, |\Omega_n|} =  
 \sum_{n = 1}^\infty \frac1{n + 1}\, 
\sqrt{\frac{2\pi}e\,  \left(1 - \frac1{e^2}\right)} = +\infty ,
\end{eqnarray*}
while
\begin{eqnarray*}
 \int_{\mathbb{R}^2} \mathcal{B}\left(V(x)\right)\, dx = 
 \int_{|x| \ge e} \mathcal{B}\left(\frac{1}{|x| \ln|x|}\right)\, dx 
= 2\pi \int_{r \ge e} \mathcal{B}\left(\frac{1}{r \ln r}\right)\, r dr \\
\le \pi \int_{r \ge e} \frac{1}{r \ln^2 r}\,  dr = \pi 
\end{eqnarray*}
(see Lemma \ref{ABelem}) and hence $\|V\|_{\mathcal{B}, \mathbb{R}^2} < \infty$.

The above discussion of \eqref{logcomp} and \eqref{Orlcomp} may give one an impression that
neither of \eqref{clCLRest} and \eqref{SolMainEst} is stronger than the other one. It turns out that
this is not the case, and it follows from \eqref{A0B0}, \eqref{logcomp} and 
Lemmas \ref{avequiv}, \ref{avequivB} and \ref{reversetr} (see below) 
that \eqref{SolMainEst} is actually stronger than \eqref{clCLRest}. Indeed,
\begin{eqnarray}\label{Solimpl}
&& \left\|(\mathbf{A}_n)_{n \ge 0}\right\|_{1, \infty}
+ \sum_{n = 0}^\infty \mathbf{B}_n \le \mbox{const}\, 
\left(\left\|(\mathbf{A}_n)_{n \ge 1}\right\|_{1, \infty}
+ \sum_{n = 0}^\infty \mathbf{B}_n\right) \nonumber \\
&& \le \mbox{const}\, \Big(\int_{\mathbb{R}^2} V(x) \ln(1 + |x|)\, dx
+ \pi e^2 \|V\|_{\mathcal{B}, \mathbf{ \Omega}_0} \nonumber \\
&& + \sum_{n = 1}^\infty \|V\|_{\mathcal{B}, \Omega_n} +  
\sum_{n = 1}^\infty \ln\left(\frac72\, |\Omega_n|\right)\, 
\int_{\Omega_n} V(x)\, dx\Big) \\
&& \le \mbox{const}\, \left(\int_{\mathbb{R}^2} V(x) \ln(1 + |x|)\, dx
+ \|V\|_{\mathcal{B}, \mathbb{R}^2} 
+  \int_{|x| > e} V(x) \ln\ln|x|\, dx\right) \nonumber \\
&& \le \mbox{const}\, \left(\|V\|_{\mathcal{B}, \mathbb{R}^2} 
+ \int_{\mathbb{R}^2} V(x) \ln(1 + |x|)\, dx\right) , \ \ \ \forall V \ge 0 . \nonumber
\end{eqnarray}
Note that for $V$ given by \eqref{log3} one has
\begin{eqnarray*}
\sum_{n = 0}^\infty \mathbf{B}_n = \sum_{n = 2}^\infty \mathcal{B}_n \le 
\sum_{n = 2}^\infty \frac{1}{2\pi e^{2n} n^2 \ln n}
 \|1\|^{\rm (av)}_{\mathcal{B}, \Omega_n}
= \|1\|_{\mathcal{B}, \Delta} \sum_{n = 2}^\infty \frac{|\Omega_n|}{2\pi e^{2n} n^2 \ln n} \\
= \|1\|_{\mathcal{B}, \Delta} \sum_{n = 2}^\infty \frac{e^2 - 1}{2 n^2 \ln n} < +\infty .
\end{eqnarray*}
Hence the right-hand side of \eqref{SolMainEst} is finite for this $V$ while the right-hand side of
\eqref{clCLRest} is infinite. So, \eqref{SolMainEst} is strictly stronger than \eqref{clCLRest}.

\begin{lemma}\label{reversetr}
There exists $C_{26} > 0$ such that
$$
\sum_{n = 1}^\infty \|V\|_{\mathcal{B}, \Omega_n}  \le C_{26}
\left(\|V\|_{\mathcal{B}, \mathbb{R}^2\setminus\mathbf{\Omega}_0} +
\int_{|x| > e} V(x) \ln\ln|x|\, dx\right) , \ \ \ \forall V \ge 0
$$
(see \eqref{ringsN}, \eqref{bald0}).
\end{lemma}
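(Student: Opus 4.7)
The target inequality is a reverse-triangle type bound for the $\mathcal{B}$-norms on the dyadic annuli $\Omega_n$, $n\ge 1$, with a double-logarithmic $L^1$ correction. Only the wrong direction is directly available: the triangle inequality gives $\|V\|_{\mathcal{B},E}\le\sum_n \|V\|_{\mathcal{B},\Omega_n}$, where $E:=\mathbb{R}^2\setminus\mathbf{\Omega}_0$. My plan is to route through the averaged Orlicz norm $\|\cdot\|^{\mathrm{(av)}}_{\mathcal{B},\cdot}$ introduced in \cite{Sol}, which does enjoy a genuine subadditivity over disjoint pieces of a finite-measure set (the 2D analog of Lemma~3 of \cite{Sol} used throughout the paper), and then translate back to the ordinary $\mathcal{B}$-norm by means of Lemma \ref{avequivB}, which costs a factor of $\ln(\tfrac72\mu(\Omega))$ against $\|V\|_{L_1(\Omega)}$.

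First, for each $n\ge 1$ one has $|\Omega_n|=\pi(e^2-1)e^{2n}>1$, so Lemma \ref{avequiv} yields $\|V\|_{\mathcal{B},\Omega_n}\le\|V\|^{\mathrm{(av)}}_{\mathcal{B},\Omega_n}$; it therefore suffices to estimate $\sum_{n\ge 1}\|V\|^{\mathrm{(av)}}_{\mathcal{B},\Omega_n}$. Next, partition the index set into consecutive blocks $J_k$, set $F_k:=\bigcup_{n\in J_k}\Omega_n$, apply the subadditivity of the averaged norm to obtain $\sum_{n\in J_k}\|V\|^{\mathrm{(av)}}_{\mathcal{B},\Omega_n}\le\|V\|^{\mathrm{(av)}}_{\mathcal{B},F_k}$, and then invoke Lemma \ref{avequivB} on each $F_k$ to get
$$
\|V\|^{\mathrm{(av)}}_{\mathcal{B},F_k}\ \le\ \|V\|_{\mathcal{B},F_k}+\ln\!\Bigl(\tfrac72|F_k|\Bigr)\|V\|_{L_1(F_k)}.
$$
Summing over $k$ produces
$$
\sum_{n\ge 1}\|V\|_{\mathcal{B},\Omega_n}\ \le\ \sum_k\|V\|_{\mathcal{B},F_k}+\sum_k\ln\!\Bigl(\tfrac72|F_k|\Bigr)\|V\|_{L_1(F_k)},
$$
and the blocks $J_k$ should be chosen so that (i) $\ln(\tfrac72|F_k|)\lesssim\ln\ln|x|$ on $F_k$, which lets the second sum be absorbed into $\int_E V\ln\ln|x|\,dx$, and (ii) the first sum is dominated by $\|V\|_{\mathcal{B},E}$.

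The main obstacle is that requirements (i) and (ii) pull in opposite directions. Condition (i) forces $|F_k|$ to be only polynomial in $\ln R_k$, where $R_k$ is the inner radius of $F_k$, i.e.\ $F_k$ must be a very thin annular strip; this produces infinitely many blocks and the naive monotonicity bound $\|V\|_{\mathcal{B},F_k}\le\|V\|_{\mathcal{B},E}$ then sums to $+\infty$. Overcoming this is the real technical heart of the proof. The most natural remedy is to mimic the test-function splitting from the proof of Lemma \ref{avequivB} itself, but applied globally on $E$: given any admissible $g$ for the dual definition of $\sum\|V\|_{\mathcal{B},\Omega_n}$, write $g=g^{\mathrm b}+g^{\mathrm{ex}}$ on each $\Omega_n$ with threshold $\tau_n\asymp\ln n\asymp\ln\ln|x|$, use Lemma \ref{Aelem} together with the bound $\int_{\Omega_n}\mathcal{A}(g)d\mu\le 1$ to show that the excess piece has controlled modular, and collect the bounded piece into a weighted $L_1$ contribution. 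Combined with the $\ell^2$-subadditivity $(\sum_k\|V\|_{\mathcal{B},F_k}^2)^{1/2}\le\|V\|_{\mathcal{B},E}$ that follows from the convexity inequality $\mathcal{A}(cg)\le c^2\mathcal{A}(g)$ for $c\in[0,1]$, a Cauchy--Schwarz device against the decaying weights produced by the threshold splitting should close the estimate.
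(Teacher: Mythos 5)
Your first route (grouping annuli into blocks $F_k$ and applying Lemma \ref{avequivB} blockwise) is indeed a dead end, and in fact more decisively than you say: the subadditivity step forces each $F_k$ to contain whole annuli $\Omega_n$, so $|F_k|\ge |\Omega_n|\asymp e^{2n}$ and $\ln\bigl(\tfrac72|F_k|\bigr)\gtrsim \ln|x|$ on $\Omega_n$, never $\ln\ln|x|$; no choice of blocks can meet your condition (i). The substantive gap, however, is in the closing step of your second route. The threshold splitting of the dual test function at height $\tau_n\asymp\ln n$ is the right idea: for $g\ge 0$ with $\int_{\Omega_n}\mathcal{A}(g)\,dx\le 1$, Lemma \ref{Aelem} and Chebyshev give $\int_{\Omega_n}\mathcal{A}\bigl((g-\tau_n)_+\bigr)dx\le C e^{-\tau_n}$, and the low part yields the weighted $L_1$ term. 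But the "decaying weights" you want to Cauchy--Schwarz against the ($\ell^2$-subadditive) norms do not exist: for the exponential class $\mathcal{A}$ a small modular does not produce a small norm, i.e.\ from $\int\mathcal{A}(h_n)\le\epsilon_n$ one cannot infer $\int\mathcal{A}(h_n/\epsilon_n^{1/2})\le 1$ (take $h_n=M\chi_{A}$ with $\mathcal{A}(M)|A|=\epsilon_n$ and $M$ large), so the best per-annulus bound remains $\int_{\Omega_n}V h_n\le \|V\|_{\mathcal{B},\Omega_n}$ with no gain — and $\sum_n\|V\|_{\mathcal{B},\Omega_n}$ is precisely what you are trying to estimate. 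As written, the final step is circular/fails; your $\ell^2$-subadditivity observation, while correct, has nothing to pair with.

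The repair uses your own ingredients but aggregates modulars, not norms: with $\tau_n=2+2\ln(n+1)$ one gets $\sum_{n\ge1}\int_{\Omega_n}\mathcal{A}(h_n)\,dx\le K$ for an absolute constant $K$, hence $h:=\sum_n h_n$ satisfies $\int_{\mathbb{R}^2\setminus\mathbf{\Omega}_0}\mathcal{A}\bigl(h/\max\{1,K\}\bigr)dx\le 1$ by convexity, and a single global pairing via \eqref{Orlicz} gives $\int V h\,dx\le \max\{1,K\}\,\|V\|_{\mathcal{B},\mathbb{R}^2\setminus\mathbf{\Omega}_0}$; the low parts give $\int V\,O(1+\ln\ln|x|)\,dx$, whose unweighted portion is absorbed since $\ln\ln|x|\ge\ln 2$ for $|x|\ge e^2$ and $\int_{\Omega_1}V\,dx\le \|1\|_{(\mathcal{A},\Omega_1)}\|V\|_{\mathcal{B},\mathbb{R}^2\setminus\mathbf{\Omega}_0}$. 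Note this (corrected) argument is genuinely different from the paper's proof, which never dualizes: there one normalizes $\|V\|_{(\mathcal{B},\mathbb{R}^2\setminus\mathbf{\Omega}_0)}=1$, sets $\alpha_n=\int_{\Omega_n}\mathcal{B}(V)\,dx$ and $\kappa_n=\|V\|_{(\mathcal{B},\Omega_n)}$, derives $\kappa_n\le 4\alpha_n+\bigl(1+2\ln\tfrac1{\kappa_n}\bigr)\|V\|_{L_1(\Omega_n)}$ from Lemma \ref{elem}, and splits according to whether $\kappa_n\le n^{-2}$, so that the double logarithm comes from the dichotomy on the local Luxemburg norms rather than from a cut-off of dual test functions.
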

\begin{proof}
Suppose first that $\|V\|_{(\mathcal{B}, \mathbb{R}^2\setminus\mathbf{\Omega}_0)} = 1$ and let
$$
\alpha_n := \int_{\Omega_n} \mathcal{B}(V(x))\, dx , \ \ \ 
\kappa_n := \|V\|_{(\mathcal{B}, \Omega_n)} , \ \ \ n \in \mathbb{N} .
$$
Then
$$
\sum_{n = 1}^\infty \alpha_n = \sum_{n = 1}^\infty \int_{\Omega_n} \mathcal{B}(V(x))\, dx =
\int_{\mathbb{R}^2\setminus\mathbf{\Omega}_0} \mathcal{B}(V(x))\, dx = 1 , 
$$
$$
\alpha_n \le 1 \ \Longrightarrow \ \kappa_n \le 1 ,
$$
and
\begin{eqnarray*}
1 =  \int_{\Omega_n} \mathcal{B}\left(\frac{V(x)}{\kappa_n}\right) dx \le 
\int_{\Omega_n} \left(\frac{V(x)}{\kappa_n} + 
2\frac{V(x)}{\kappa_n} \ln_+\frac{V(x)}{\kappa_n}\right) dx \\
\le \frac1{\kappa_n} \int_{\Omega_n} \left(V(x) + 2 V(x) \ln_+V(x)\right) dx +
\frac2{\kappa_n}\, \ln\frac1{\kappa_n}\, \|V\|_{L_1(\Omega_n)} \\
\le \frac4{\kappa_n}\, \alpha_n + \frac1{\kappa_n}\left(1 + 2 \ln\frac1{\kappa_n}\right)
\|V\|_{L_1(\Omega_n)} 
\end{eqnarray*}
(see Lemma \ref{elem}).
Hence
$$
\kappa_n \le 4\alpha_n + \left(1 + 2 \ln\frac1{\kappa_n}\right)
\|V\|_{L_1(\Omega_n)}
$$
and
\begin{eqnarray*}
\sum_{n = 1}^\infty \|V\|_{\mathcal{B}, \Omega_n} \le 2 \sum_{n = 1}^\infty \kappa_n =
2 \sum_{\kappa_n \le 1/n^2} \kappa_n + 2 \sum_{\kappa_n > 1/n^2} \kappa_n \\
\le 2 \sum_{n = 1}^\infty \frac1{n^2} + 8 \sum_{n = 1}^\infty \alpha_n +
2 \sum_{n = 1}^\infty (1 + 4 \ln n)\|V\|_{L_1(\Omega_n)} \\
= \frac{\pi^2}{3} + 8 + 2 \sum_{n = 1}^\infty (1 + 4 \ln n)
\int_{e^n < |x| < e^{n + 1}} V(x)\, dx \\
\le \frac{\pi^2}{3} + 8 + 2 \int_{|x| > e} V(x)(1 + 4 \ln\ln|x|)\, dx \\
\le C_{26}
\left(\|V\|_{\mathcal{B}, \mathbb{R}^2\setminus\mathbf{\Omega}_0} +
\int_{|x| > e} V(x) \ln\ln|x|\, dx\right) 
\end{eqnarray*}
(see \eqref{Luxemburgequiv}).
The case of a general $V$ is reduced to 
$\|V\|_{(\mathcal{B}, \mathbb{R}^2\setminus\mathbf{\Omega}_0)} = 1$ by the scaling
$V \mapsto t V$, $t > 0$. 
\end{proof}

Let us now show that each of \eqref{LaptNetrSolEst} and \eqref{LaptevEst}
implies \eqref{GrigTalkEst} (with a different $c$). 
Suppose $\|V\|_{(\mathcal{B}, \Omega_0)} = 1$.
It follows from \eqref{Luxemburgequiv}, \eqref{LuxNormPre} and Lemma \ref{avequiv} that
\begin{eqnarray*}
&& \mathcal{D}_0 = \int_1^e \|V(r, \cdot)\|_{\mathcal{B}, \mathbb{S}}\, r dr \le
2 \int_1^e \left(1 + \int_\mathbb{S} \mathcal{B}(V(r, \vartheta))\, d\vartheta\right) r dr \\
&& \le (e^2 - 1) + 2 \int_{\Omega_0} \mathcal{B}(V(x))\, dx = e^2 + 1 =
(e^2 + 1) \|V\|_{(\mathcal{B}, \Omega_0)} \\
&& \le  (e^2 + 1) \|V\|_{\mathcal{B}, \Omega_0} \le  
(e^2 + 1) \|V\|^{\rm (av)}_{\mathcal{B}, \Omega_0} = (e^2 + 1) \mathcal{B}_0 , \\ \\
&& \mathcal{G}_0 =  
\int_{\mathbb{S}} \|V(\cdot, \vartheta)\|^{\rm (av)}_{\mathcal{B}, \mathcal{I}_0}\, d\vartheta
\le 2(e - 1) \int_{\mathbb{S}} \left(1 + \int_1^e \mathcal{B}(V(r, \vartheta))\, dr\right)\, d\vartheta \\
&& \le 2(e - 1)\left(2\pi + \int_{\Omega_0} \mathcal{B}(V(x))\, dx\right) = 2(e - 1)(2\pi + 1) \\
&& = 2(e - 1)(2\pi + 1) \|V\|_{(\mathcal{B}, \Omega_0)} \le 2(e - 1)(2\pi + 1) \mathcal{B}_0
=: C_{27} \mathcal{B}_0 .
\end{eqnarray*}
The scaling $V \mapsto t V$, $t > 0$ allows one to extend the inequalities  
$\mathcal{D}_0 \le (e^2 + 1) \mathcal{B}_0$ and $\mathcal{G}_0 \le C_{27} \mathcal{B}_0$
to arbitrary $V \ge 0$. Using the
scaling $x \mapsto e^n x$ of the independent variable and \cite[Lemma 1]{Sol}, one gets 
$\mathcal{D}_n \le (e^2 + 1) \mathcal{B}_n$ and $\mathcal{G}_n \le C_{27} \mathcal{B}_n$,
$\forall n \in \mathbb{Z}$. Hence
\begin{eqnarray*}
&& \sum_{\{n \in \mathbb{Z} : \ \mathcal{D}_n > c\}} \mathcal{D}_n \le
(e^2 + 1) \sum_{\{n \in \mathbb{Z} : \ \mathcal{D}_n > c\}} \mathcal{B}_n \le
(e^2 + 1) \sum_{\{n \in \mathbb{Z} : \ \mathcal{B}_n > c/(e^2 + 1)\}} \mathcal{B}_n\, , \\ \\
&& \sum_{\{n \in \mathbb{Z} : \ \mathcal{G}_n > c\}} \mathcal{G}_n \le
C_{27} \sum_{\{n \in \mathbb{Z} : \ \mathcal{G}_n > c\}} \mathcal{B}_n \le
C_{27}  \sum_{\{n \in \mathbb{Z} : \ \mathcal{B}_n > c/C_{27}\}} \mathcal{B}_n\, .
\end{eqnarray*}
Similarly,
\begin{eqnarray*}
\|V\|_{\mathcal{L}_1\left(\mathbb{R}_+, L_{\mathcal{B}}(\mathbb{S})\right)} =
\sum_{n \in \mathbb{Z}} \mathcal{D}_n \le
(e^2 + 1) \sum_{n \in \mathbb{Z}} \mathcal{B}_n ,
\end{eqnarray*}
and \eqref{LaptNetrSolEst2} implies \eqref{SolMainEst}.

Putting together what we have obtained so far, we get the diagram below. 
For the convenience of the reader, we precede the diagram with a list of the estimates 
discussed above:
{\footnotesize
$$
N_- (\mathcal{E}_V) \le C  \left(\|V\|_{\mathcal{B}, \mathbb{R}^2} 
+ \int_{\mathbb{R}^2} V(x) \ln(1 + |x|)\, dx\right)  + 1, \eqno{\eqref{clCLRest}}
$$
$$
N_- (\mathcal{E}_V) \le C_6  \left(\int_{\mathbb{R}^2} V(x) \ln(2 + |x|)\, dx + 
\int_{\mathbb{R}^2} V_*(|x|) \ln_+ \frac{1}{|x|}\, dx\right)  + 1 ,  \eqno{\eqref{myKMWest}}
$$
$$
N_- (\mathcal{E}_V) \le  c_1 \int_{\mathbb{R}^2} V_*(|x|) \ln_+ \frac{1}{|x|}\, dx +
c_2 \int_{\mathbb{R}^2} V(x) \ln_+ |x|\, dx 
+ c_3 \int_{\mathbb{R}^2} V(x)\, dx + 1  , \eqno{ \eqref{KMWest}}
$$
$$
N_- (\mathcal{E}_V) \le 1 + 4 \sum_{A_n > 1/4}  \sqrt{A_n}
+ C_7 \sum_{\mathcal{B}_n > c} \mathcal{B}_n  ,  \eqno{\eqref{GrigTalkEst}}
$$
$$
N_- (\mathcal{E}_V) \le 1 + C_8 \left(\left\|(\mathbf{A}_n)_{n \ge 0}\right\|_{1, \infty}
+ \sum_{n = 0}^\infty \mathbf{B}_n\right)  , \eqno{\eqref{SolMainEst}}
$$
$$
N_- (\mathcal{E}_V) \le 1 + C_7 \sum_{\{n \in \mathbb{Z} : \ A_n > c\}}  \sqrt{A_n}
+ C_7 \sum_{\{n \in \mathbb{Z} : \ B_n > c\}} B_n  , \eqno{\eqref{GrigTalkEst0}}
$$
$$
N_- (\mathcal{E}_V) \le 1 + 4 \sum_{A_n > 1/4}  \sqrt{A_n}
+ C_9 \sum_{\mathcal{D}_n > c} \mathcal{D}_n  ,
\eqno{\eqref{LaptNetrSolEst}}
$$
$$
N_- (\mathcal{E}_V) \le 1 + C_{10} \left(\left\|(A_n)_{n \in \mathbb{Z}}\right\|_{1, \infty} +
\|V\|_{\mathcal{L}_1\left(\mathbb{R}_+, L_{\mathcal{B}}(\mathbb{S})\right)}\right)  , 
\eqno{\eqref{LaptNetrSolEst2}}
$$
$$
N_- (\mathcal{E}_V) \le 1 + C_{11} \left(\left\|(A_n)_{n \in \mathbb{Z}}\right\|_{1, \infty} +
\int_{\mathbb{R}_+} \left(\int_{\mathbb{S}}|V(r, \vartheta)|^p d\vartheta\right)^{1/p} r dr \right) , 
\eqno{\eqref{LaptNetrSolEst3}}
$$
$$
N_- (\mathcal{E}_V) \le 1 + C_{12} \left(\left\|(A_n)_{n \in \mathbb{Z}}\right\|_{1, \infty} +
\int_{\mathbb{R}_+} 
\left(\int_{\mathbb{S}}|V_{\mathcal{N}}(r, \vartheta)|^p d\vartheta\right)^{1/p} r dr\right) , 
\eqno{\eqref{LaptNetrSolEst4}}
$$
$$
N_- (\mathcal{E}_V) \le 1 + C_{13} \left(\left\|(A_n)_{n \in \mathbb{Z}}\right\|_{1, \infty} +
\|V_{\mathcal{N}}\|_{\mathcal{L}_1\left(\mathbb{R}_+, L_{\mathcal{B}}(\mathbb{S})\right)}\right)  ,
\eqno{\eqref{LaptNetrSolEst5}}
$$
$$
N_- (\mathcal{E}_V) \le 1 + 4 \sum_{A_n > 1/4}  \sqrt{A_n}
+ C_{23} \sum_{\mathcal{G}_n > c} \mathcal{G}_n\,  ,
\eqno{\eqref{LaptevEst}}
$$
where
\begin{eqnarray*}
&& A_0 := \int_{U_0} V(x)\, dx , \ \
A_n := \int_{U_n} V(x) |\ln|x||\, dx , \ n \not= 0 ,  \\ 
&& \mathcal{B}_n := 
\|V\|^{\rm (av)}_{\mathcal{B}, \Omega_n} , \ \ \
B_n := \left(\int_{\Omega_n} V^p(x) |x|^{2(p -1)}\, dx \right)^{1/p} , \\
&& \mathbf{A}_n = A_n, \ \mathbf{B}_n = \mathcal{B}_n ,   \ \ n \in \mathbb{N}, \\
&& \mathbf{A}_0 := \int_{\mathbf{\Omega}_0} V(x) |\ln|x||\, dx , \ \ \ \mathbf{B}_0 = 
\|V\|^{\rm (av)}_{\mathcal{B}, \mathbf{\Omega}_0} ,
\ \ \  \mathbf{\Omega}_0 =  \{x \in \mathbb{R}^2 : \ |x| \le e\} , \\
&&\mathcal{D}_n :=  \|V\|_{\mathcal{L}_1\left(\mathcal{I}_n, L_{\mathcal{B}}(\mathbb{S})\right)} , \ \ \ 
\mathcal{G}_n := e^n 
\int_{\mathbb{S}} \|V(\cdot, \vartheta)\|^{\rm (av)}_{\mathcal{B}, \mathcal{I}_n}\, d\vartheta\, , \ \ \
\mathcal{I}_n := (e^n, e^{n + 1}) , \ \ \ n \in \mathbb{Z} , 
\end{eqnarray*}
$1 < p < \infty$, and $U_n$, $\Omega_n$ are defined in \eqref{ringsR}, \eqref{ringsN}. 
Estimate \eqref{KMWest} was conjectured by N.N. Khuri, A. Martin and T.T. Wu in \cite{KMW}, 
\eqref{SolMainEst} was proved by M. Solomyak in \cite{Sol}, \eqref{GrigTalkEst0} was obtained by 
A. Grigor'yan and N. Nadirashvili in \cite{Grig},  \eqref{LaptNetrSolEst4} was proved by
A. Laptev and M. Solomyak in \cite{LS}, and \eqref{LaptevEst} was proposed by A. Laptev
(see Remark \ref{Laptev}).}

$$
\begin{array}{ccccccl}
&& \eqref{LaptNetrSolEst}   & \Longrightarrow &  \framebox{\eqref{LaptNetrSolEst2}  
$\Longleftrightarrow$  
\eqref{LaptNetrSolEst5}} & \Longrightarrow &  \framebox{\eqref{LaptNetrSolEst3}  
$\Longleftrightarrow$   
\eqref{LaptNetrSolEst4}}\\ \\
&& \big\Downarrow    &   & \big\Downarrow \\ \\
\eqref{LaptevEst} & \Longrightarrow &
\eqref{GrigTalkEst}   & \Longrightarrow &  \eqref{SolMainEst} & \Longrightarrow & 
\framebox{\eqref{clCLRest}  $\Longleftrightarrow$  \eqref{myKMWest}  $\Longleftrightarrow$
\eqref{KMWest}}  \\ \\
&& \big\Downarrow \\ \\
&& \eqref{GrigTalkEst0}
\end{array}
$$

Our next task is to show that no other implication holds between the estimates in the above
diagram. Suppose $V(r, \vartheta) = V_1(r) V_2(\vartheta)$. If $V_2 \equiv 1$, \ 
$V_1(r) = \frac{\alpha}{r^2 (1 + \ln^2 r)}$ and $\alpha > 0$ is sufficiently small, then
the right-hand side of \eqref{GrigTalkEst0} equals 1 while 
$\left\|(A_n)_{n \in \mathbb{N}}\right\|_{1, \infty} = +\infty$ 
(see \cite{Grig}). Hence \eqref{LaptNetrSolEst2} does not imply \eqref{GrigTalkEst0}.

Suppose now $\mbox{\rm supp}\, V_1 \subseteq [1, e]$. If $V_2 \equiv 1$ and
$V_1 \in L_1([1, e])\setminus L_{\mathcal{B}} ([1, e])$, e.g. 
$V_1(r) = \frac{1}{(r - 1)(1 + \ln^2(r -1))}$, then the right-hand side of \eqref{LaptevEst} is
infinite, while the right-hand side of \eqref{LaptNetrSolEst4} is finite. Hence
\eqref{LaptevEst} does not imply \eqref{LaptNetrSolEst3}, \eqref{LaptNetrSolEst4}.
If $V_2 \equiv 1$ and
$V_1 \in L_{\mathcal{B}}([1, e])\setminus\cup_{p > 1} L_p([1, e])$, e.g. 
$V_1(r) = \frac{1}{(r - 1) (1 + |\ln(r -1)|^3)}$, then the right-hand side of \eqref{GrigTalkEst0} is
infinite, while the right-hand side of \eqref{clCLRest} is finite. Hence
\eqref{GrigTalkEst0} does not imply \eqref{clCLRest}. 
If $V_1 \equiv 1$ on 
$[1, e]$ and $V_2 \in L_1(\mathbb{S})\setminus L_{\mathcal{B}}(\mathbb{S})$, 
e.g. $V_2(\vartheta) = \frac{1}{|\vartheta| (1 + \ln^2|\vartheta|)}$, then 
then the right-hand side of \eqref{LaptNetrSolEst}  is
infinite, while the right-hand side of \eqref{LaptevEst} is finite. Hence
\eqref{LaptNetrSolEst}  does not imply \eqref{LaptevEst}.
Finally, if $V_1 \equiv 1$ on 
$[1, e]$ and $V_2 \in L_{\mathcal{B}}(\mathbb{S})\setminus\cup_{p > 1} L_p(\mathbb{S})$, 
e.g. $V_2(\vartheta) = \frac{1}{|\vartheta| (1 + |\ln|\vartheta||^3)}$, then the right-hand side of \eqref{LaptNetrSolEst3}  is
infinite, while the right-hand side of \eqref{clCLRest} is finite. Hence
\eqref{LaptNetrSolEst3}  does not imply \eqref{clCLRest}.

\section{Concluding remarks}\label{Concl}

Using estimate \eqref{LaptNetrSolEst5}, one can prove that Theorem 1.1 and Proposition 1.2 in 
\cite{LS} remain true if one substitutes the condition 
$V_{\mathcal{N}} \in \mathcal{L}_1\left(\mathbb{R}_+, L_p(\mathbb{S})\right)$, $p > 1$ with
\begin{equation}\label{nradLlogL}
V_{\mathcal{N}} \in \mathcal{L}_1\left(\mathbb{R}_+, L_{\mathcal{B}}(\mathbb{S})\right) .
\end{equation}
In particular, if \eqref{nradLlogL} is satisfied, then the Weyl-type asymptotic formula 
$$
\lim_{\alpha \to +\infty} \alpha^{-1} N_- (\mathcal{E}_{\alpha V}) = \frac1{4\pi} \int_{\mathbb{R}^2}
V(x)\, dx
$$
holds if and only if 
\begin{equation}\label{l10}
\lim_{s \to 0+} s\, \mbox{card}\, \{n \in \mathbb{Z} : \ A_n > s\} = 0
\end{equation}
(see \eqref{AnBnR}). One can also prove that, if \eqref{nradLlogL} is satisfied, then
$N_- (\mathcal{E}_{\alpha V}) = O(\alpha)$ as $\alpha \to +\infty$ if and only if
$\left\|(A_n)_{n \in \mathbb{Z}}\right\|_{1, \infty} < \infty$ (cf. \cite[Theorem 1.1]{LS0}; see also
Remark \ref{alphaq} below).
The last result and \eqref{LaptNetrSolEst} imply that if
$$
\sum_{\{n \in \mathbb{Z} : \ \alpha A_n > c\}}  \sqrt{\alpha A_n} = O(\alpha) \ \mbox{ as } \
\alpha \to +\infty ,
$$
then $\left\|(A_n)_{n \in \mathbb{Z}}\right\|_{1, \infty} < \infty$. 
This implication is a special case ($q = 1$, $\sigma = 1/2$) of the following result on sequences of numbers $a_n$,
$n \in \mathbb{Z}$: if $q > \sigma > 0$, then
\begin{eqnarray}
&& \sum_{\{n \in \mathbb{Z} : \ \alpha |a_n| > c\}}  \left(\alpha |a_n|\right)^\sigma = O(\alpha^q) \ 
\mbox{ as } \ \alpha \to +\infty \label{alph}  \\
&& \Longleftrightarrow \ \mbox{card}\, \{n \in \mathbb{Z} : \  |a_n| > s\} = O(s^{-q}) \ 
\mbox{ as } \ s \to 0+ . \label{s}  
\end{eqnarray}
Indeed, if \eqref{s} holds, then one gets similarly to \eqref{sqrtweak}
\begin{eqnarray*}
&& \sum_{\alpha |a_n| > c} (\alpha |a_n|)^\sigma = 
\alpha^\sigma \sum_{|a_n| > c/\alpha} \int_0^{|a_n|} \sigma s^{\sigma - 1} ds \\
&& = \sigma \alpha^\sigma \int_0^\infty s^{\sigma - 1}\, \mbox{card}\, \{n \in \mathbb{Z} : \ 
|a_n| > s \ \& \ |a_n| > c/\alpha\}\, ds  \\
&& = \sigma \alpha^\sigma \int_0^{c/\alpha} 
s^{\sigma - 1}\, \mbox{card}\, \{n \in \mathbb{Z} : \ |a_n| > c/\alpha\}\, ds  \\
&& + \sigma \alpha^\sigma \int_{c/\alpha}^\infty 
s^{\sigma - 1}\, \mbox{card}\, \{n \in \mathbb{Z} : \ |a_n| > s\}\, ds \\
&& =  \alpha^\sigma\, O(\alpha^q) \int_0^{c/\alpha} s^{\sigma - 1}\, ds + \alpha^\sigma\, 
O\left(\int_{c/\alpha}^\infty s^{\sigma -1 - q}\, ds\right) \\
&& = O(\alpha^q) \ \mbox{ as } \ \alpha \to +\infty , 
\end{eqnarray*}
where the last two integrals exist due to the condition $q > \sigma > 0$.
Suppose now \eqref{alph} holds. Then using the notation $\alpha = c/s$ one gets
\begin{eqnarray*}
&& \mbox{card}\, \{n \in \mathbb{Z} : \ |a_n| > s\}  = \mbox{card}\, \{n \in \mathbb{Z} : \ |a_n| > c/\alpha\} \\
&& \le 
\frac{1}{c^\sigma}\, \sum_{\alpha |a_n| > c} (\alpha |a_n|)^\sigma = O(\alpha^q) = O(s^{-q}) \ 
\mbox{ as } \ s \to 0+ .
\end{eqnarray*}

Let us return to the discussion of the necessity of  
\eqref{l10} and $\left\|(A_n)_{n \in \mathbb{Z}}\right\|_{1, \infty} < \infty$.
Neither of these conditions is necessary for $N_- (\mathcal{E}_V)$ to be
finite. Indeed, let $V \ge 0$ be a radial potential such that $(A_n)_{n \in \mathbb{Z}} \in
\ell_\infty(\mathbb{Z})\setminus \ell_{1, \infty}(\mathbb{Z})$. Then it follows from Theorem
\ref{LaptNetrSol} that $N_- (\mathcal{E}_{\beta V}) = 1$ for sufficiently small $\beta > 0$,
although the above conditions are not satisfied for $\beta V$.
It turns out that the condition $(A_n)_{n \in \mathbb{Z}} \in \ell_\infty(\mathbb{Z})$ on the other hand
is necessary for $N_- (\mathcal{E}_V)$ to be finite (Theorem \ref{logneces}), while 
$\left\|(A_n)_{n \in \mathbb{Z}}\right\|_{1, \infty} < \infty$
is necessary for $N_- (\alpha V) = O\left(\alpha\right)$  as  $\alpha \to +\infty$ to hold
(Theorem \ref{lognecesnew}). Note that neither of these results assumes that \eqref{nradLlogL}
is satisfied.

\begin{theorem}\label{logneces}
Let $V \ge 0$. Then
\begin{equation}\label{10pi}
N_- (\mathcal{E}_V) \ge \frac13\, \mbox{\rm card}\, \{n \in \mathbb{Z} : \ A_n \ge 10\pi\} .
\end{equation} 
\end{theorem}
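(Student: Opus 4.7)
The strategy is to construct, for each $n \in \mathbb{Z}$ with $A_n \ge 10\pi$, a radial trial function $w_n \in \mathrm{Dom}(\mathcal{E}_V)$ of compact support on which $\mathcal{E}_V$ is strictly negative, and then extract a subfamily with pairwise (essentially) disjoint supports of cardinality at least $|S|/3$ via pigeonhole on residues modulo~$3$. The variational definition \eqref{N-} then yields \eqref{10pi}.

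Following Section~\ref{BSsect}, I first pass to the one-dimensional problem via $w(x) = \phi(\ln|x|)$, which gives
\begin{equation*}
\mathcal{E}_V[w] = 2\pi \int_{\mathbb{R}} \bigl(|\phi'(t)|^2 - G(t)\phi(t)^2\bigr)\, dt,
\end{equation*}
with $G$ as in \eqref{V1d}, and (after the change $r = e^t$) recasts $A_n = 2\pi\int_{I_n} G(t)|t|\,dt$ for $n\ne 0$ and $A_0 = 2\pi\int_{I_0} G\,dt$, where $I_n$ is the image of $U_n$ in the $t$-variable. For $n \ge 1$ I would take $\phi_n$ to be the piecewise-linear trapezoid equal to $1$ on $I_n = [2^{n-1},2^n]$, linearly interpolated to $0$ on the asymmetric shoulders $[2^{n-2}, 2^{n-1}]$ and $[2^n, 2^{n+1}]$, so that $J_n := \operatorname{supp}\phi_n = [2^{n-2}, 2^{n+1}]$. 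A direct computation yields $\int|\phi_n'|^2\,dt = 4\cdot 2^{-n} + 2^{-n} = 5\cdot 2^{-n}$, while the pointwise bound $|t|\le 2^n$ on $I_n$ gives
\begin{equation*}
\int G\phi_n^2\,dt \;\ge\; \int_{I_n} G\,dt \;\ge\; \frac{1}{2^n}\int_{I_n} G|t|\,dt \;=\; \frac{A_n}{2\pi\cdot 2^n} \;\ge\; 5\cdot 2^{-n}
\end{equation*}
under the assumption $A_n \ge 10\pi$. The last inequality is in fact strict: equality in the Chebyshev step would force the locally integrable $G$ to be supported at the single point $\{t=2^n\}$, which forces $A_n=0$, a contradiction. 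Hence $\mathcal{E}_V[w_n] < 0$. Negative $n$ is handled by the reflection $\phi_n(t) = \phi_{|n|}(-t)$, and $n = 0$ by the symmetric trapezoid on $[-2,2]$ equal to $1$ on $I_0 = [-1,1]$, for which $\int|\phi_0'|^2 = 2 < 5 \le A_0/(2\pi) \le \int G\phi_0^2$.

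A direct inspection shows that the right endpoint of $J_n$ equals the left endpoint of $J_{n+3}$, so $J_n$ and $J_m$ have disjoint interiors whenever $|n-m|\ge 3$; the same holds for $J_0$ against $J_{\pm 3}, J_{\pm 6},\dots$, and trivially for $J_n, J_m$ with opposite signs. Partitioning $S := \{n\in\mathbb{Z} : A_n \ge 10\pi\}$ into three residue classes modulo $3$, pigeonhole delivers a class $S_k \subseteq S$ with $|S_k|\ge |S|/3$ whose members have pairwise essentially disjoint supports. Consequently all cross terms $\mathcal{E}_V(w_n, w_m)$ vanish for $n\ne m$ in $S_k$, and
\begin{equation*}
\mathcal{E}_V\!\Big[\sum_{n\in S_k} c_n w_n\Big] = \sum_{n\in S_k} |c_n|^2\,\mathcal{E}_V[w_n] < 0
\end{equation*}
for every non-zero linear combination, so $\mathrm{span}\{w_n : n\in S_k\}$ is an $|S_k|$-dimensional subspace on which $\mathcal{E}_V$ is negative definite. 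The variational definition \eqref{N-} then gives $N_-(\mathcal{E}_V) \ge |S_k| \ge |S|/3$, which is \eqref{10pi}. The most delicate point is the joint calibration of the two shoulder widths: they must be large enough for $\int|\phi_n'|^2 \le A_n/(2\pi\cdot 2^n)$ to hold at the threshold $A_n = 10\pi$, yet small enough so that the enlarged supports only overlap in the range $|n-m|\le 2$ (otherwise the mod-$3$ pigeonhole would not apply); the asymmetric dyadic choice $2^{n-2}$ on the left and $2^n$ on the right is the unique calibration that meets both requirements with the threshold constant $10\pi$.
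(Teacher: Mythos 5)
Your proof is correct and is essentially the paper's own argument: your trapezoids in the variable $t=\ln|x|$ are exactly the logarithmic ramp functions $w_m$ supported on $\overline{U_{m-1}\cup U_m\cup U_{m+1}}$ used in the paper (the gradient energy $2\pi\cdot 5\cdot 2^{-|m|}=10\pi\,2^{-|m|}$ and the Chebyshev bound $\int_{U_m}V\ge 2^{-|m|}A_m$ are the same computations), and your mod-$3$ pigeonhole is just a repackaging of the paper's selection of indices at mutual distance at least $3$. The only cosmetic differences are that you secure strict negativity uniformly via the equality case of the Chebyshev step (the paper splits into the cases $m=0,\pm1$ and $m\ne 0$), and that your shoulders for $|m|\le 1$ are slightly narrower than the paper's; neither affects correctness.
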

\begin{proof}
Let $m \in \mathbb{Z}$ be such that $A_m \ge 10\pi$ and let $r_0 < r_1 < r_2 < r_3$ be the
radii of the boundary circles of the annuli $U_{m - 1}$, $U_m$, $U_{m + 1}$. Consider the
function
$$
w_m(x) := \left\{\begin{array}{cl}
  0 ,   & \  |x | \le r_0 \ \mbox{ or } \ |x| \ge r_3 , \\ \\
 1 -  \frac{\ln (r_1/|x|)}{\ln(r_1/r_0)} ,  & \ r_0 <  |x| < r_1  , \\ \\
 1 , & \ r_1 \le |x| \le r_2 , \\ \\
 \frac{\ln (r_3/|x|)}{\ln(r_3/r_2)} ,  & \ r_2 <  |x| < r_3 .  
\end{array}\right.
$$
It is easy to see that
\begin{eqnarray*}
&& \int_{\mathbb{R}^2} |\nabla w_m(x)|^2\, dx = 2\pi \int_{r_0}^{r_1} \frac1{(\ln(r_1/r_0))^2}\,
\frac1r\, dr + 2\pi \int_{r_2}^{r_3} \frac1{(\ln(r_3/r_2))^2}\, \frac1r\, dr \\
&& = 2\pi \left(\frac1{\ln(r_1/r_0)} + \frac1{\ln(r_3/r_2)}\right) \\
&&  = \left\{\begin{array}{cl}
  2\pi \left(2^{-|m|} + 2^{2 - |m|}\right) ,   & \  m \le -2 \ \mbox{ or } \ m \ge 2 , \\ \\
 2\pi ,  & \ m = \pm 1 , \\ \\
 4\pi , & \ m = 0
 \end{array}\right. \\ 
 && \le 2\pi \left(2^{-|m|} + 2^{2 - |m|}\right)  = 10\pi\, 2^{-|m|} ,
\end{eqnarray*}
and the inequality is strict if $m = 0, \pm 1$.
Since $w_m(x) = 1$ for $x \in U_m$, one gets
\begin{eqnarray*}
&& \int_{\mathbb{R}^2} V(x) |w_m(x)|^2\, dx \ge \int_{U_m} V(x) dx \\
&& \ge \left\{\begin{array}{cl}
  \frac1{2^{|m|}}
\int_{U_m} V(x) |\ln|x||\, dx,   & \  m \not= 0 , \\ \\
 A_0 ,  & \ m = 0
 \end{array}\right. \\
&& =  \frac{A_m}{2^{|m|}}  
 \ge 10\pi\, 2^{-|m|} \ge \int_{\mathbb{R}^2} |\nabla w_m(x)|^2\, dx ,
\end{eqnarray*}
and the second inequality is strict if $m \not= 0$.
Hence $\mathcal{E}_V[w_m] < 0$ if $A_m \ge 10\pi$, and \eqref{10pi} follows from the fact
that $w_m$ and $w_k$ have disjoint supports if $|m - k| \ge 3$. Indeed, if the set 
$\Sigma := \{n \in \mathbb{Z} : \ A_n \ge 10\pi\}$ is infinite, then it contains infinitely many elements
that lie at a distance at least 3 from each other, and both sides of \eqref{10pi} are infinite. If
the set $\Sigma$ is  nonempty and finite, take
$$
m_1 := \min\Sigma, \ \ \ m_{j +1} = \min\{n \in \Sigma : \ n \ge m_j + 3\}, \ j = 1, \dots
$$ 
and continue the process until the set on the right-hand side is empty. This produces 
at least $\frac13\,\mbox{\rm card}\, \Sigma$ numbers and concludes the proof. \end{proof}

\begin{theorem}\label{lognecesnew}
Let $V \ge 0$. If $N_- (\alpha V) = O\left(\alpha\right)$  as  $\alpha \to +\infty$, \ then
 $\left\|(A_n)_{n \in \mathbb{Z}}\right\|_{1, \infty} < \infty$.
\end{theorem}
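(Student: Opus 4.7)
The plan is to deduce this theorem as an immediate scaling consequence of the preceding Theorem \ref{logneces}, which already gives a lower bound for $N_-(\mathcal{E}_V)$ in terms of the counting function of the sequence $(A_n)$.

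First I would apply Theorem \ref{logneces} to $\alpha V$ in place of $V$. Since $A_n$ is linear in $V$, the quantity $A_n$ associated to $\alpha V$ is simply $\alpha A_n$. Therefore
\begin{equation*}
N_-(\mathcal{E}_{\alpha V}) \ge \tfrac13\,\mathrm{card}\,\{n \in \mathbb{Z} : \alpha A_n \ge 10\pi\}
= \tfrac13\,\mathrm{card}\,\{n \in \mathbb{Z} : A_n \ge 10\pi/\alpha\}.
\end{equation*}

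Next I would invert this to turn the hypothesis $N_-(\mathcal{E}_{\alpha V}) \le C\alpha$ (for $\alpha$ large) into a bound on the distribution function of $(A_n)$. Given $s>0$ small, set $\alpha := 10\pi/s$; then the display above and the hypothesis yield
\begin{equation*}
\mathrm{card}\,\{n \in \mathbb{Z} : A_n \ge s\} \le 3 N_-(\mathcal{E}_{\alpha V}) \le 3 C \alpha = \frac{30\pi C}{s},
\end{equation*}
so $s\cdot\mathrm{card}\,\{n \in \mathbb{Z} : A_n \ge s\} \le 30\pi C$ uniformly in (sufficiently small) $s$. For $s$ bounded away from zero, the same quantity is trivially bounded because only finitely many $A_n$ can exceed a fixed positive threshold: indeed, $N_-(\mathcal{E}_V) < \infty$ already forces $\mathrm{card}\,\{n : A_n \ge 10\pi\} < \infty$ via Theorem \ref{logneces} applied with $\alpha = 1$, and a cruder rescaling handles the intermediate range. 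Taking the supremum over $s>0$ gives $\|(A_n)_{n \in \mathbb{Z}}\|_{1,\infty} \le 30\pi C < \infty$.

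There is essentially no main obstacle: all the work is hidden in Theorem \ref{logneces}, whose proof constructs, for each $n$ with $A_n \ge 10\pi$, an explicit logarithmic-cutoff test function $w_m$ supported on three consecutive annuli on which $\mathcal{E}_V < 0$, and whose supports (after thinning by a factor of $3$) are disjoint. The present statement is then just the correct scaling reading of that lower bound, exactly in the spirit of the equivalence \eqref{alph}$\Leftrightarrow$\eqref{s} discussed just before Theorem \ref{lognecesnew} (applied with $q = \sigma = 1$, interpreting $N_-(\mathcal{E}_{\alpha V}) = O(\alpha)$ as the $\alpha$-side statement).
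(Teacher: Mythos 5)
Your argument is exactly the paper's proof: apply Theorem \ref{logneces} to $\alpha V$, use linearity of $A_n$ in $V$, and substitute $s = 10\pi/\alpha$ to get $s\,\mathrm{card}\,\{n : A_n \ge s\} \le 30\pi C$ for small $s$, which is all the paper records (your extra remarks on the range of $s$ bounded away from zero are harmless and consistent). Correct, same approach.
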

\begin{proof}
This follows from the previous theorem. 
Indeed, if $N_- (\alpha V) \le C \alpha$ for large $\alpha$, then \eqref{10pi} implies
\begin{eqnarray*}
&& \frac13\, \mbox{\rm card}\, \{n \in \mathbb{Z} : \ \alpha A_n \ge 10\pi\} \le  C \alpha  \ \ \ 
\Longleftrightarrow \\ 
&& \mbox{\rm card}\, \{n \in \mathbb{Z} : \ A_n \ge 10\pi/\alpha\} \le 3 C \alpha .
\end{eqnarray*}
Hence we have (with $s = 10\pi/\alpha$)
$$
\mbox{\rm card}\, \{n \in \mathbb{Z} : \ A_n \ge s\} \le C_1 s^{-1}  \ \ \ \mbox{ for small } \ \ \ s > 0,
$$
where $C_1 =  30\pi C$.
\end{proof}

\begin{remark}\label{alphaq}
{\rm Suppose \eqref{nradLlogL} holds. Then it follows from the above that 
$N_- (\mathcal{E}_{\alpha V}) = O(\alpha^q)$, $q \ge 1$ as $\alpha \to +\infty$ if and only if
\begin{equation}\label{alphaqest}
\mbox{\rm card}\, \{n \in \mathbb{Z} : \ A_n > s\} = O(s^{-q}) \ \mbox{ as } \ s \to 0+ . 
\end{equation}
Indeed, if this equality holds, then it follows from \eqref{LaptNetrSolEst}, \eqref{NVequiv} and
from the equivalence of \eqref{alph} and \eqref{s} that $N_- (\mathcal{E}_{\alpha V}) = O(\alpha^q)$.
Conversely, if the latter holds, then \eqref{10pi} with $\alpha = 10\pi/s$ implies
\begin{eqnarray*}
\mbox{\rm card}\, \{n \in \mathbb{Z} : \ A_n > s\} = 
\mbox{\rm card}\, \{n \in \mathbb{Z} : \ \alpha A_n > 10\pi\} \\
\le 3 N_- (\mathcal{E}_{\alpha V}) =
O(\alpha^q) = O(s^{-q}) \ \mbox{ as } \ s \to 0+ . 
\end{eqnarray*}
}
\end{remark}

Note that none of the estimates in the paper is sharp in the sense that 
$N_- (\mathcal{E}_V)$ has to be infinite if the right-hand side is infinite. 
Indeed, the examples at the end of Section \ref{Compar} show that the right-hand side of
\eqref{LaptNetrSolEst}
may be infinite while $N_- (\mathcal{E}_V)$ is finite due to \eqref{LaptevEst}, and the other
way around: the right-hand side of \eqref{LaptevEst} may be infinite while 
$N_- (\mathcal{E}_V)$ is finite due to \eqref{LaptNetrSolEst}. The same argument shows that
the finiteness of the right hand sides in these estimates is not necessary even for 
$N_- (\alpha V) = O\left(\alpha\right)$  as  $\alpha \to +\infty$ to hold. 
On the other hand,  
no estimate of the type
\begin{equation}\label{imposs}
N_- (\mathcal{E}_V) \le \mbox{const} + \int_{\mathbb{R}^2} V(x)  W(x)\, dx +
\mbox{const}\,  \|V\|_{\Psi, \mathbb{R}^2} 
\end{equation}
can hold with an Orlicz norm $\|\cdot\|_\Psi$ weaker than $\|\cdot\|_\mathcal{B}$, 
provided the weight function $W$ is bounded in a neighborhood of at least one point
(cf. \cite[Section 4]{Sol}). Indeed, let $\Omega$ be a bounded open set where $W$ is bounded 
and suppose $\Psi(s)/\mathcal{B}(s) \to 0$ as $s \to \infty$. Let
$\Phi$ be the complementary function to $\Psi$. Then it follows from \cite{HMT} and
\cite[Lemma 13.1]{KR} that there exist $w_j \in \mathring{W}^1_2(\Omega)$, 
$j \in \mathbb{N}$ such that $\|w\|_{W^1_2(\Omega)} = 1$ and $\|w_j^2\|_{\Phi, \Omega} \to
\infty$ as $j \to \infty$. The Banach-Steinhaus theorem and \cite[Theorem 14.2]{KR} imply
the existence of $v \in L_\Psi(\Omega)$ such that the sequence
$$
\int_\Omega v(x) w^2_j(x)\, dx ,  \ \ \ j \in \mathbb{N}
$$
is unbounded. Define $V \in L_\Psi\left(\mathbb{R}^2\right)$ by  
$V(x) = |v(x)|$ for $x \in \Omega$ and $V(x) = 0$ for $x \not\in \Omega$.
Then
$$
\mathcal{Q}(V) := \sup_{\|w\|_{W^1_2\left(\mathbb{R}^2\right)} = 1} 
\int_{\mathbb{R}^2} V(x) |w(x)|^2\, dx = \infty .
$$
Since the quadratic form $\int_{\mathbb{R}^2} V(x) |w(x)|^2 dx \ge 0$ is closable in
$W^1_2\left(\mathbb{R}^2\right)$, it corresponds to a nonnegative self-adjoint operator.
Since $\mathcal{Q}(V) = \infty$, the operator is unbounded, and it follows from the spectral theorem
that there exists an infinite-dimensional subspace $\mathcal{F} \subset 
W^1_2\left(\mathbb{R}^2\right)$ such that 
$$
\int_{\mathbb{R}^2} V(x) |w(x)|^2 dx > \|w\|^2_{W^1_2\left(\mathbb{R}^2\right)} , \ \ \ 
\forall w \in \mathcal{F}\setminus\{0\} .
$$
Hence $N_- (\mathcal{E}_V) = \infty$, but the right-hand side of \eqref{imposs} is finite.
Below is a more constructive proof of the same result. 

\begin{theorem}\label{LlogLneces}
Let $W \ge 0$ be bounded in a neighborhood of at least one point and let $\Psi$ be
an $N$-function such that
$$
\lim_{s \to \infty} \frac{\Psi(s)}{\mathcal{B}(s)} = 0.
$$
Then there exists a compactly supported $V \ge 0$ such that
$$
\int_{\mathbb{R}^2} V(x)  W(x)\, dx + \|V\|_{\Psi, \mathbb{R}^2} < \infty
$$
and $N_- (\mathcal{E}_V) = \infty$.
\end{theorem}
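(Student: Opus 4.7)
My plan is to construct $V$ as a countable sum $V = \sum_{j=1}^\infty V_j$ of pairwise disjoint, highly concentrated bumps placed inside a ball where $W$ is bounded, so that each $V_j$ supports its own bound state via a classical logarithmic Moser cap $w_j$. The disjoint supports will make the $w_j$ linearly independent directions of negativity for $\mathcal{E}_V$, while the hypothesis $\Psi(s)/\mathcal{B}(s) \to 0$ will enter at exactly one point, to ensure that each bump contributes an arbitrarily small amount to $\|V\|_{\Psi, \mathbb{R}^2}$.

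Pick $x_0$ and $r_0 > 0$ with $W \le M$ on $B(x_0, r_0)$, and choose pairwise disjoint concentric pairs of balls $B(y_j, b_j) \subset B(y_j, \rho_j) \subset B(x_0, r_0)$ with $\rho_j \to 0$ and $b_j \le \rho_j^{2}$ (the latter is a convenient technical bound); the precise sizes will be fixed by an inductive choice. Set
\[
V_j := c_j \chi_{B(y_j, b_j)}, \qquad c_j := \frac{3}{b_j^{2}\log(\rho_j/b_j)},
\]
and let $w_j$ be the standard Moser cap supported in $B(y_j, \rho_j)$, i.e. equal to $1$ on $B(y_j, b_j)$ and to $\log(\rho_j/|x-y_j|)/\log(\rho_j/b_j)$ on the annulus. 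A direct polar-coordinate calculation gives $\int|\nabla w_j|^2 = 2\pi/\log(\rho_j/b_j)$ and $\int V_j w_j^2 = 3\pi/\log(\rho_j/b_j)$, so $\mathcal{E}_{V_j}[w_j] = -\pi/\log(\rho_j/b_j) < 0$. Clearly $\int V_j W \le M c_j \pi b_j^{2} = 3\pi M/\log(\rho_j/b_j) \to 0$. Since $V_j$ is the constant $c_j$ on a set of measure $\pi b_j^{2}$, definition \eqref{Luxemburg} gives $\|V_j\|_{(\Psi)} = c_j/\Psi^{-1}(1/(\pi b_j^{2}))$. Using $\mathcal{B}(s)\sim s\log s$ at infinity (so $\mathcal{B}^{-1}(t)\sim t/\log t$), the hypothesis $\Psi\ll\mathcal{B}$ translates into $\Psi^{-1}(t)\log t/t\to\infty$: for any $c>0$, $\Psi(s)\le c\mathcal{B}(s)$ eventually, hence $\Psi^{-1}(t)\ge \mathcal{B}^{-1}(t/c) \sim t/(c\log t)$. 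Applied at $t = 1/(\pi b_j^{2})$ this yields $\Psi^{-1}(1/(\pi b_j^{2}))\cdot b_j^{2}\log(1/b_j)\to\infty$, whereas $c_j b_j^{2}\log(1/b_j) = 3\log(1/b_j)/\log(\rho_j/b_j)$ stays bounded (by $6$ under $b_j\le\rho_j^{2}$). Therefore $\|V_j\|_{(\Psi)}\to 0$, and one can shrink $b_j$ inductively so that $\|V_j\|_{(\Psi)} + \int V_j W \le 2^{-j}$.

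Setting $V:=\sum_j V_j$, a short convexity argument on pairwise disjoint non-negative summands --- using $\int_{\mathbb{R}^2}\Psi(V/\kappa) = \sum_j \int\Psi(V_j/\kappa)$ and $\Psi(\lambda s)\le\lambda\Psi(s)$ for $\lambda\in[0,1]$ --- gives countable subadditivity of the Luxemburg norm, so $\|V\|_{(\Psi)} \le \sum_j 2^{-j}\le 1$ and hence $\|V\|_{\Psi,\mathbb{R}^2}<\infty$ by \eqref{Luxemburgequiv}; likewise $\int V W<\infty$, and $V$ is supported in $B(x_0, r_0)$. Because the $w_j$ have pairwise disjoint supports in $B(y_j, \rho_j)$, any nontrivial finite linear combination $w=\sum_{k\in F}\alpha_k w_k$ satisfies $\mathcal{E}_V[w] = \sum_{k\in F}|\alpha_k|^{2}\mathcal{E}_{V_k}[w_k]<0$, producing an infinite-dimensional negative subspace and hence $N_-(\mathcal{E}_V)=\infty$. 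The only delicate step --- and the only place the hypothesis $\Psi\ll\mathcal{B}$ is used --- is the asymptotic $\|V_j\|_{(\Psi)}\to 0$; everything else is scaling and standard $N$-function bookkeeping.
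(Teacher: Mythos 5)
Your construction is correct and is essentially the paper's own proof: disjoint constant bumps of height comparable to $b_j^{-2}/\log(\rho_j/b_j)$ placed on small disks inside the region where $W$ is bounded, logarithmic caps giving $\mathcal{E}_{V_j}[w_j]<0$, and the hypothesis $\Psi\ll\mathcal{B}$ used only to make the Orlicz contribution of each bump small (the paper takes exactly $\rho_j=r_j$, $b_j=r_j^2$, $c_j=3r_j^{-4}/\ln(1/r_j)$). The only difference is bookkeeping: you compute the Luxemburg norm of each bump and sum by subadditivity after an inductive choice of $b_j$, while the paper bounds $\int\Psi(V)\,dx$ directly via $\gamma(s)=\sup_{t\ge s}\Psi(t)/\mathcal{B}(t)$ and the choice $\sum_k\gamma(1/r_k)<\infty$; both are sound.
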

\begin{proof}
Shifting the independent variable if necessary, we can assume that $W$ is bounded 
in a neighborhood of $0$. Let $r_0 > 0$ be such that $W$ is bounded in the open ball
$B(0, r_0)$.

Let 
$$
\gamma(s) := \sup_{t \ge s}\, \frac{\Psi(t)}{\mathcal{B}(t)}\, .
$$ 
Then $\gamma$ is a non-increasing function, $\gamma(s) \to 0$ as $s \to \infty$, and
$\Psi(s) \le \gamma(s) \mathcal{B}(s)$.
Since $\Psi$ is an $N$-function, $\Psi(s)/s \to \infty$ as $s \to \infty$
(see \cite[(1.16)]{KR}). Hence there exists $s_0 \ge e$ such that $\Psi(s) \ge s$ and
$\gamma(s) \le 1$ for $s \ge s_0$. 

Choose $r_k \in (0, 1/s_0)$, $k \in \mathbb{N}$ in such a way that $r_k < \frac13\, r_{k - 1}$ and
$$
\sum_{k = 1}^\infty \gamma\left(\frac1{r_k}\right) < \infty .
$$
It is easy to see that the open disks $B(2r_k, r_k)$, $k \in \mathbb{N}$ lie in $B(0, r_0)$
and are pairwise disjoint.  

Let
\begin{eqnarray*}
& t_k := \frac{3}{\ln\frac1{r_k}}\, r_k^{-4}\, ,  & \\ \\
& V(x) := \left\{\begin{array}{ll}
  t_k ,   &  x \in B\left(2r_k, r_k^2\right) , \  k \in \mathbb{N}, \\
    0 ,  &   \mbox{otherwise.} 
\end{array}\right. &
\end{eqnarray*}
Then
$$
t_k = \frac{3}{\ln\frac1{r_k}}\, r_k^{-4} > \frac{1}{r_k^3 \ln\frac1{r_k}}\, r_k^{-1} > \frac1{r_k}
$$
and
\begin{eqnarray*}
&& \int_{\mathbb{R}^2} \Psi(V(x))\, dx = \sum_{k = 1}^\infty \pi r_k^4\, \Psi(t_k) \le
 \pi \sum_{k = 1}^\infty r_k^4\, \gamma(t_k) \mathcal{B}(t_k) \\
&&  < \pi \sum_{k = 1}^\infty r_k^4\, \gamma(t_k) (1 + t_k) \ln(1 + t_k) <
4\pi \sum_{k = 1}^\infty r_k^4\, \gamma(t_k)\, t_k \ln t_k \\
&& = 4\pi \sum_{k = 1}^\infty \gamma(t_k)\,
\frac{3 r_k^4}{r_k^4\, \ln\frac1{r_k}}\,
\ln\frac{3}{r_k^4\, \ln\frac1{r_k}}< 4\pi \sum_{k = 1}^\infty \gamma(t_k)\,
\frac{3}{\ln\frac1{r_k}}\,
\ln\frac{3}{r_k^4} \\
&& < 72\pi \sum_{k = 1}^\infty \gamma(t_k) \le
72\pi \sum_{k = 1}^\infty \gamma\left(\frac1{r_k}\right) < \infty .
\end{eqnarray*}
Hence $\|V\|_{\Psi, \mathbb{R}^2} < \infty$. Since $t_k > 1/r_k \ge s_0$, we have 
$t_k \le \Psi(t_k)$ and 
$$
\int_{\mathbb{R}^2} V(x)\, dx \le \int_{\mathbb{R}^2} \Psi(V(x))\, dx < \infty .
$$
Taking into account that the support of $V$ lies in $B(0, r_0)$ and that $W$ is bounded in
$B(0, r_0)$, we get
$$
\int_{\mathbb{R}^2} V(x)W(x)\, dx < +\infty .
$$

Let
$$
w_k(x) := \left\{\begin{array}{cl}
  1 ,   & \  |x - 2r_k| \le r_k^2 , \\ \\
    \frac{\ln (r_k/|x - 2r_k|)}{\ln(1/r_k)} ,  & \ r_k^2 <  |x - 2r_k| \le r_k  , \\ \\
 0 , & \  |x - 2r_k| > r_k  
\end{array}\right.
$$
(cf. \cite{Grig}). Then
$$
\int_{\mathbb{R}^2} |\nabla w_k(x)|^2\, dx = \frac{2\pi}{\ln(1/r_k)}
$$
and
$$
\int_{\mathbb{R}^2} V(x) |w_k(x)|^2\, dx \ge \int_{B\left(2r_k, r_k^2\right)} V(x)\, dx =
\pi r_k^4\, t_k = 
\frac{3\pi}{\ln(1/r_k)}\, .
$$
Hence
$$
\mathcal{E}_V[w_k] < 0 , \ \ \ \forall k \in \mathbb{N}
$$
and $N_- (\mathcal{E}_V) = \infty$.
\end{proof}

It is probably difficult to obtain an estimate for $N_- (\mathcal{E}_V)$ that is sharp in the above
sense, i.e. is such that $N_- (\mathcal{E}_V)$ is infinite if the right-hand side is infinite. Indeed, there
are potentials $V \ge 0$ such that $N_- (\mathcal{E}_{\alpha V}) < \infty$ for $\alpha < 1$ and
$N_- (\mathcal{E}_{\alpha V}) = \infty$ for $\alpha > 1$. For such potentials, $N_- (\mathcal{E}_V)$
may be finite or infinite and the following theorem shows that, in the latter case, 
 $N_- (\mathcal{E}_{\alpha V})$ may grow arbitrarily fast or arbitrarily slow as $\alpha \to 1 - 0$.

 \begin{theorem}\label{alpha1}
i) For any $N \in \mathbb{N}$, there exists a $V \in L_1(\mathbb{R}^2)$, $V \ge 0$, such that
$N_- (\mathcal{E}_V) = N$ and
$N_- (\mathcal{E}_{\alpha V}) = \infty$ for $\alpha > 1$. 

ii) For any sequence $(\alpha_k)_{k \in \mathbb{N}}$ increasing to 1 and any $N_k \in \mathbb{N}$,
there exists a $V \in L_1(\mathbb{R}^2)$, $V \ge 0$, such that 
$N_k \le N_- (\mathcal{E}_{\alpha_{k + 1} V}) - N_- (\mathcal{E}_{\alpha_k V}) < \infty$, 
$k \in \mathbb{N}$.

iii) For any sequence $(\alpha_k)_{k \in \mathbb{N}}$ increasing to 1 and satisfying the condition
\begin{equation}\label{075}
\sum_{k = 1}^\infty \frac{1 - \alpha_{k + 1}}{\alpha_{k + 1} - \alpha_k} < \infty\, ,
\end{equation}
there exist a $V \in L_1(\mathbb{R}^2)$, $V \ge 0$ and a $k_0 \in \mathbb{N}$ such that 
$N_- (\mathcal{E}_{\alpha_{k + 1} V}) - N_- (\mathcal{E}_{\alpha_k V}) = 1$ for all $k \ge k_0$.
\end{theorem}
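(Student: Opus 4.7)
The plan is to work with radial potentials $V(x)=F(|x|)$ and to exploit Remark~\ref{RadialBS}: $N_-(\mathcal{E}_V)=N_-(\mathcal{E}_{\mathcal{R},V})+N_-(\mathcal{E}_{\mathcal{N},V})$, where each summand decomposes further into angular modes $m\in\mathbb{Z}$. After the standard substitution $u(r)=\sqrt{r}\,v(r)$, the $m$-th mode reduces to the half-line Schr\"odinger problem
\[
-v'' + \frac{m^2-1/4}{r^2}\,v - F(r)\,v \qquad \text{on } L_2(\mathbb{R}_+) \text{ with Dirichlet at } 0.
\]
For $m=0$ the centrifugal term is attractive (critical coupling $0$), while for $|m|\ge 1$ it is repulsive, so the critical coupling for the first bound state in the $m$-th mode can be tuned to any prescribed positive value by choosing $F$. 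This angular decomposition is the mechanism that overcomes the usual 2D rigidity (every compactly supported $V\ge 0$ with $\int V>0$ forces $N_-\ge 1$) and lets us engineer prescribed crossing patterns.

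The canonical building block is the Hardy-critical radial potential $V_*(x)=\chi_{\{|x|>e\}}/(4|x|^2\ln^2|x|)$, whose radial image under $r=e^t$ is $G_*(t)=\chi_{\{t>1\}}/(4t^2)$. By the sharp Hardy inequality the associated form on the weighted space $X=\{u:\int|u'|^2\,dt+\int|u|^2e^{2t}\,dt<\infty\}$ is strictly positive on $X\setminus\{0\}$ at the critical coupling (the formal ground state $t^{-1/2}$ is excluded because $\int_1^\infty t^{-1}e^{2t}\,dt=\infty$), so $N_-(\mathcal{E}_{\alpha V_*})=0$ for $\alpha\le 1$; for $\alpha>1$ the oscillation of $u(t)=t^{1/2}\cos\bigl(\tfrac{1}{2}\sqrt{\alpha-1}\,\ln t\bigr)$ together with Sturm's theorem gives $N_-(\mathcal{E}_{\alpha V_*})=\infty$. \emph{Part (i)} follows by taking $V=V_*+\beta W$ with $W$ a smooth nonnegative compactly supported radial bump disjoint from $\{|x|>e\}$ and $\beta>0$ a generic parameter (to avoid simultaneous zero-crossings): $\beta\mapsto N_-(\mathcal{E}_{V_*+\beta W})$ is a nondecreasing integer-valued step function that starts at $0$ for $\beta=0$, tends to $+\infty$ as $\beta\to\infty$, and, by genericity, increases by exactly one at each jump, so it attains every positive integer $N$; and $N_-(\mathcal{E}_{\alpha V})\ge N_-(\mathcal{E}_{\alpha V_*})=\infty$ for $\alpha>1$.

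For \emph{part (ii)}, place additional radial bumps $\beta_kW_k$ at disjoint, well-separated scales so that each $W_k$ activates at least $N_k$ new bound states (spread across non-radial modes $|m|\ge 1$) precisely as $\alpha$ sweeps through the window $(\alpha_k,\alpha_{k+1}]$; the tuning is possible because the centrifugal barrier makes the critical coupling of each $(m,W_k)$-pair adjustable, and disjoint supports decouple the contributions. Finiteness of $N_-(\mathcal{E}_{\alpha V})$ for every $\alpha<1$ is verified by applying Theorem~\ref{LaptNetrSol} to the total $V$ and checking summability of the resulting $(A_n)$ and $(\mathcal{D}_n)$ sequences. \emph{Part (iii)}, the technical heart, engineers the crossing sequence $\{\alpha^{(k)}\}$ to interlace $\{\alpha_k\}$ one-to-one for large $k$, by stitching the Hardy-critical backbone to a sequence of single-crossing bumps in distinct angular modes tuned via explicit WKB/Liouville--Green asymptotics for the near-zero eigenvalues of the supercritical half-line operator $-v''+((m^2-1/4)/r^2)\,v-F(r)\,v$. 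Condition \eqref{075}, rewritten as $\sum_k\beta_{k+1}/(\beta_k-\beta_{k+1})<\infty$ with $\beta_k:=1-\alpha_k$, is exactly the density/spacing condition under which such an interlacing is realizable.

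The main obstacle throughout is achieving equalities (not merely lower bounds) on the eigenvalue count in 2D, which is what forces the radial-plus-angular decomposition; once that is in place, the 1D half-line theory yields matching bounds in both directions via test functions in disjoint intervals for the lower bound and the Birman-Schwinger principle combined with Hardy's inequality for the upper bound. A subsidiary difficulty in part~(iii) is the upper bound on the number of crossings per window, which requires sharp WKB asymptotics to prevent extra eigenvalues from contaminating a given window $(\alpha_k,\alpha_{k+1})$; the summability condition \eqref{075} is precisely what guarantees the required separation. The explicit constructions and these sharper estimates are carried out in Appendix~B.
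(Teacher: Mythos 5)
Your proposal has a genuine gap at its foundation. The key property you claim for the building block $V_*(x)=\chi_{\{|x|>e\}}/(4|x|^2\ln^2|x|)$, namely $N_-(\mathcal{E}_{\alpha V_*})=0$ for $\alpha\le 1$, is false: in two dimensions every nonzero $V\ge 0$ produces at least one negative eigenvalue (as recalled in the Introduction), and the failure is visible inside your own reduction. The form domain of the radial part is the space $X$ of Section \ref{BSsect}, which contains functions that are constant near $t=-\infty$ (the weight $e^{2t}$ is integrable there); taking $u=1$ on $(-\infty,T]$, cut off linearly on $[T,2T]$, gives $\int|u'|^2\,dt=1/T$ while $\tfrac14\int_1^T t^{-2}\,dt\to\tfrac14$, so the critical-coupling form is negative for large $T$. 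Hardy's inequality requires a vanishing condition at the left endpoint, which $X$ does not impose; the non-membership of the formal ground state $t^{-1/2}$ in $X$ does not yield positivity. Hence $N_-(\mathcal{E}_{\alpha V_*})\ge 1$ for every $\alpha>0$, your step function in part (i) does not start at $0$, and the ``generic unit jumps'' assertion is also unjustified: a radial bump creates eigenvalues in the angular modes $\pm m$ in degenerate pairs, so the total counting function can jump by $2$ and skip integers, and you provide no mechanism that keeps the non-radial modes from contributing at the relevant couplings. The paper avoids precisely this by supporting $V$ in $|x|>a_0$ with $a_0$ large and $V\le\gamma|x|^{-2}\ln^{-2}|x|$, so that $N_-(\mathcal{E}_{\mathcal{N},V})=0$ (via Lemma 4.1 of \cite{LS0} or Theorem \ref{LaptNetrSol}), and then obtains the exact radial count from piecewise $c/t^2$ potentials, explicit solutions, and Lemma \ref{Dirichlet}.

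Beyond this, parts (ii) and (iii) are plans rather than proofs. The heart of (iii) is an \emph{upper} bound: one must show that under \eqref{075} each window $(\alpha_k,\alpha_{k+1}]$ contributes exactly one new eigenvalue, which requires quantitative control of the oscillation phase accumulated across all annular pieces. The paper does this with the explicit solutions $u_{1,k}$, $u_{2,k}$, the bookkeeping in \eqref{pi8} showing the total excess phase is at most $2\pi\sum_j(1-\beta_j)/(\beta_j-\beta_{j-1})<\pi/4$ by \eqref{18}, and a Sturm comparison capping $N_-\bigl(\mathcal{E}^{a_1,a_{k+1}}_{\beta_k G}\bigr)$ at $k$. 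Your appeal to ``WKB/Liouville--Green asymptotics \dots carried out in Appendix B'' defers exactly the step that constitutes the proof (Appendix B being the paper's own argument), and similarly in (ii) the claim that bumps can be tuned to ``activate at least $N_k$ new bound states precisely in the window $(\alpha_k,\alpha_{k+1}]$'' is asserted, not established. As written, the proposal does not prove any of the three parts.
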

\begin{proof}
See Appendix B.
\end{proof}

\section{Appendix A: \ Sharp 1-dimensional Sobolev type inequalities}

Let $0 < a <b$. It follows from the embedding $W^1_2([a, b]) \hookrightarrow 
C([a, b])$ that there exist constants $\alpha, \beta > 0$ such that
$$
\frac{|u(x)|^2}{|x|} \le \alpha \int_a^b |u'(t)|^2\, dt +  \beta \int_a^b \frac{|u(t)|^2}{|t|^2}\, dt , \ \ \ 
\forall u \in W^1_2([a, b]) .
$$
This inequality is used in Section \ref{BSsect} and it is natural to ask what the best values of 
$\alpha, \beta > 0$ are. Since there are two constants involved here, it is convenient to rewrite the inequality in the following
form
\begin{equation}\label{1dHSob}
\frac{|u(x)|^2}{|x|} \le C(\kappa) \left(\int_a^b |u'(t)|^2\, dt +  \kappa \int_a^b \frac{|u(t)|^2}{|t|^2}\, dt\right) , 
\ \ \  \forall u \in W^1_2([a, b]) ,
\end{equation}
and to look for the best value of $C(\kappa)$ for a given $\kappa > 0$.

\begin{lemma}\label{1dHSobLem}
Let
\begin{equation}\label{gammas}
\gamma_1 = \frac{- 1 + \sqrt{1 + 4\kappa}}{2}\, , \ \ \gamma_2 = \frac{- 1 - \sqrt{1 + 4\kappa}}{2}\, .
\end{equation}
For any $\kappa > 0$, \eqref{1dHSob} holds with
$$
C(\kappa; x) := \frac{\gamma_1^2 x^{\sqrt{1 + 4\kappa}} +
\kappa \left(b^{\sqrt{1 + 4\kappa}} + a^{\sqrt{1 + 4\kappa}}\right) + 
\gamma_2^2 (ab)^{\sqrt{1 + 4\kappa}}x^{-\sqrt{1 + 4\kappa}}}{\kappa 
\sqrt{1 + 4\kappa}\, \left(b^{\sqrt{1 + 4\kappa}} - a^{\sqrt{1 + 4\kappa}}\right)}
$$
and becomes an equality for
$$
u(t) = \left\{\begin{array}{ll}
\left(\gamma_2 b^{\sqrt{1 + 4\kappa}} - \gamma_1 x^{\sqrt{1 + 4\kappa}}\right)
\left(\gamma_1 t^{\gamma_1} - \gamma_2 a^{\sqrt{1 + 4\kappa}}t^{\gamma_2}\right) t  ,   
 & \  a \le t < x , \\ \\
\left(\gamma_2 a^{\sqrt{1 + 4\kappa}} - \gamma_1 x^{\sqrt{1 + 4\kappa}}\right)
\left(\gamma_1 t^{\gamma_1} - \gamma_2 b^{\sqrt{1 + 4\kappa}}t^{\gamma_2}\right) t ,   
 & \  x < t \le b.
\end{array}\right.
$$
The maximum of $C(\kappa; x)$ is achieved at $x = a$  and is equal to
\begin{equation}\label{1dHOpt}
C(\kappa) := \frac{1}{2\kappa}\,\left(1 + \sqrt{1 + 4\kappa}\,
\frac{b^{\sqrt{1 + 4\kappa}} + a^{\sqrt{1 + 4\kappa}}}{b^{\sqrt{1 + 4\kappa}} - 
a^{\sqrt{1 + 4\kappa}}}\right)\, .
\end{equation}
\end{lemma}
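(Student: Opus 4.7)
My plan is to treat the inequality as a Rayleigh quotient and compute the extremal explicitly. Setting $Q[u] := \int_a^b |u'|^2\, dt + \kappa \int_a^b |u|^2/t^2\, dt$, the best $C(\kappa; x)$ equals $\sup_{u \ne 0} |u(x)|^2/(|x|\,Q[u])$. Equivalently I minimize $Q[u]$ subject to $u(x) = 1$; coercivity (via the embedding $W^1_2([a,b]) \hookrightarrow C([a,b])$) gives attainment, and the Euler--Lagrange equation is $-u'' + \kappa u/t^2 = 0$ on $[a,x) \cup (x,b]$ with natural boundary conditions $u'(a) = u'(b) = 0$.

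The ODE is of Euler type; the ansatz $u = t^\mu$ gives $\mu(\mu-1) = \kappa$, whose roots are $\mu_\pm = 1 + \gamma_{1,2}$ with $\gamma_{1,2}$ as in \eqref{gammas}. Thus $\gamma_1 + \gamma_2 = -1$, $\gamma_1 \gamma_2 = -\kappa$, $\gamma_1 - \gamma_2 = \sqrt{1 + 4\kappa} =: s$, and $\gamma_i(\gamma_i + 1) = \kappa$. On $[a,x)$ I take the combination $c_1(\gamma_1 t^{\gamma_1+1} - \gamma_2 a^s t^{\gamma_2+1})$, whose derivative $c_1 \kappa (t^{\gamma_1} - a^s t^{\gamma_2})$ vanishes at $t = a$ because $a^s = a^{\gamma_1 - \gamma_2}$. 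Symmetrically on $(x,b]$ I use $c_2(\gamma_1 t^{\gamma_1+1} - \gamma_2 b^s t^{\gamma_2+1})$, which satisfies $u'(b) = 0$. Matching the two pieces at $t = x$ forces $c_1 = \gamma_2 b^s - \gamma_1 x^s$ and $c_2 = \gamma_2 a^s - \gamma_1 x^s$, up to an irrelevant overall scalar, recovering the extremal $u_*$ in the statement.

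To prove sharpness, I decompose an arbitrary $u \in W^1_2([a,b])$ as $u = \lambda u_* + v$ with $\lambda := u(x)/u_*(x)$, so $v(x) = 0$. Integration by parts gives
\[
Q(u_*, v) = \bigl[u_*' v\bigr]_a^b + \int_a^b (-u_*'' + \kappa u_*/t^2)\, v\, dt - v(x)\bigl[u_*'(x^-) - u_*'(x^+)\bigr] = 0,
\]
where the boundary, bulk and jump contributions all vanish thanks to $u_*'(a) = u_*'(b) = 0$, the Euler--Lagrange equation on each subinterval, and $v(x) = 0$. Hence $Q[u] = \lambda^2 Q[u_*] + Q[v] \ge \lambda^2 Q[u_*]$, which yields $|u(x)|^2/Q[u] \le |u_*(x)|^2/Q[u_*]$, with equality for $u = u_*$.

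It remains to evaluate this ratio. The same integration by parts with $u = u_*$ gives $Q[u_*] = u_*(x)\bigl[u_*'(x^-) - u_*'(x^+)\bigr]$. A direct calculation using $s + \gamma_2 = \gamma_1$ collapses the jump to $u_*'(x^-) - u_*'(x^+) = -\kappa s (b^s - a^s)\, x^{\gamma_1}$, while $u_*(x) = x\bigl[\gamma_1 \gamma_2 (a^s + b^s) x^{\gamma_1} - \gamma_2^2 (ab)^s x^{\gamma_2} - \gamma_1^2 x^{s + \gamma_1}\bigr]$. Dividing $|u_*(x)|^2$ by $|x|\, Q[u_*]$ and substituting $\gamma_1 \gamma_2 = -\kappa$ reproduces exactly the formula for $C(\kappa; x)$. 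For the final maximization, only $f(x) := \gamma_1^2 x^s + \gamma_2^2 (ab)^s x^{-s}$ depends on $x$; since $s > 1$, $f$ is strictly convex on $(0, \infty)$, so its maximum over $[a,b]$ is attained at an endpoint, and $f(a) - f(b) = (\gamma_1^2 - \gamma_2^2)(a^s - b^s) = s(b^s - a^s) > 0$ (using $\gamma_1^2 - \gamma_2^2 = -s$) locates it at $x = a$. Substituting $\gamma_i^2 = \kappa - \gamma_i$ and $1 + 4\kappa = s^2$ reduces $C(\kappa; a)$ to the announced closed form. The main obstacle is the jump computation, whose cancellations rely on careful tracking of exponents via $s + \gamma_2 = \gamma_1$ and $\gamma_i(\gamma_i + 1) = \kappa$.
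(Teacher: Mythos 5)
Your proof is correct, and it takes a genuinely different route from the paper. The paper argues dually: it writes $u(x)\ln\frac{b}{a}=\int_a^b G_x u'\,dt+\int_a^b \frac{u}{t}\,dt$, shifts the kernels by an arbitrary $\varphi\in\mathring{W}^1_2\big((a,b)\big)$, applies Cauchy--Schwarz, and then minimises the resulting functional $\mathcal{J}(\varphi)$ by solving its Euler equation $t^2\varphi''+2t\varphi'-\kappa\varphi=1-\kappa G_x$ with Dirichlet conditions; the extremal $u$ is recovered afterwards via $u=(t-t^2\varphi')/\kappa$, and the value of $C(\kappa;x)$ comes from evaluating $\mathcal{J}$ at the optimal $\varphi$ (a computation the author calls ``not particularly pleasant''). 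You instead solve the primal problem directly: minimise the form $Q[u]$ subject to the point constraint $u(x)=1$, identify the piecewise Euler-type extremal $u_*$ with Neumann conditions at $a,b$ and a derivative jump at $x$ (using $\gamma_i(\gamma_i+1)=\kappa$ and $\gamma_1-\gamma_2=s$), and prove sharpness by the orthogonality decomposition $u=\lambda u_*+v$, $v(x)=0$, $Q(u_*,v)=0$. Your route buys a cleaner evaluation, since $Q[u_*]=u_*(x)\bigl(u_*'(x^-)-u_*'(x^+)\bigr)$ reduces everything to the jump $-\kappa s(b^s-a^s)x^{\gamma_1}$ and the value $u_*(x)$, and I have checked that your formulas reproduce $C(\kappa;x)$ and \eqref{1dHOpt} exactly; the endpoint maximisation via convexity of $\gamma_1^2x^s+\gamma_2^2(ab)^sx^{-s}$ and $\gamma_1^2-\gamma_2^2=-s$ coincides with the paper's final step. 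Note also that your construction of $u_*$ makes the appeals to coercivity/attainment and to natural boundary conditions purely motivational, so nothing is lost there.

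Two cosmetic points. In your display for $Q(u_*,v)$ the interior jump term should enter with the opposite sign (it is $+\,v(x)\bigl[u_*'(x^-)-u_*'(x^+)\bigr]$ when the boundary contributions of the two subintervals are collected); this is harmless because $v(x)=0$, and the sign you actually use later, $Q[u_*]=u_*(x)\bigl[u_*'(x^-)-u_*'(x^+)\bigr]$, is the correct one. Also, the decomposition $\lambda=u(x)/u_*(x)$ needs $u_*(x)\neq 0$; this is immediate from your expression $u_*(x)=x\bigl[\gamma_1\gamma_2(a^s+b^s)x^{\gamma_1}-\gamma_2^2(ab)^sx^{\gamma_2}-\gamma_1^2x^{s+\gamma_1}\bigr]$, all of whose terms are strictly negative since $\gamma_1\gamma_2=-\kappa<0$, but it deserves a sentence.
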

\begin{proof}
It is easy to see that
$$
u(x) \ln\frac{b}{a} = \int_a^b G_x(t) u'(t)\, dt + \int_a^b \frac{u(t)}t\, dt ,
$$
where
$$
G_x(t) := \left\{\begin{array}{ll} 
\ln\frac{t}{a}\,  ,  & \  a \le t < x , \\ \\
\ln\frac{t}{b}\,  ,  & \  x < t \le b .
\end{array}\right.
$$
Since
$$
\int_a^b\varphi(t) u'(t)\, dt + \int_a^b (t \varphi'(t))\, \frac{u(t)}t\, dt = 0 , \ \ \ \forall \varphi \in 
\mathring{W}^1_2\big((a, b)\big) ,
$$
we also have
$$
u(x) \ln\frac{b}{a} = \int_a^b (G_x(t) - \varphi(t)) u'(t)\, dt + \int_a^b (1 - t\varphi'(t))\, \frac{u(t)}t\, dt ,
$$
and the Cauchy--Schwarz inequality implies
$$
\frac{|u(x)|^2}{|x|} \le \frac{\mathcal{J}(\varphi)}{|x| \ln^2\frac{b}{a}}\,
 \left(\int_a^b |u'(t)|^2\, dt +  \kappa \int_a^b \frac{|u(t)|^2}{|t|^2}\, dt\right) ,
$$
where
$$
\mathcal{J}(\varphi) := \int_a^b |G_x(t) - \varphi(t)|^2\, dt +  
\frac1\kappa\, \int_a^b |1 - t\varphi'(t)|^2\, dt .
$$
Hence we need to minimise $\mathcal{J}(\varphi)$ on $\mathring{W}^1_2\big((a, b)\big)$.
It is clear we only need to consider real-valued $\varphi$.
The Euler equation for this functional  takes the form
$$
t^2 \varphi'' + 2t \varphi' - \kappa\varphi = 1 - \kappa G_x , \ \ \ \varphi(a) = 0 = \varphi(b) .
$$
It is easy to solve the equation on $(a, x)$ and separately on $(x, b)$: the function $\varphi_0 = G_x$
is a solution on both intervals, and the change of the independent variable $s = \ln t$ reduces the
corresponding homogeneous equation to an ODE with constant coefficients. Choosing the 
constants in the general solutions on $(a, x)$ and $(x, b)$ in such a way that
$\varphi(a) = 0$,  $\varphi(b) = 0$, 
$\varphi(x - 0) = \varphi(x + 0)$ and $\varphi'(x - 0) = \varphi'(x + 0)$, one gets
\begin{eqnarray}\label{Hminimiser}
&& \varphi(t) = G_x(t) + \frac{x^{-\gamma_1} \ln\frac{b}{a}}{(\gamma_1 - \gamma_2)
\left(b^{\sqrt{1 + 4\kappa}} - a^{\sqrt{1 + 4\kappa}}\right)} \Phi_x(t) ,  \\
&& \Phi_x(t) := \left\{\begin{array}{ll}
\left(\gamma_2 b^{\sqrt{1 + 4\kappa}} - \gamma_1 x^{\sqrt{1 + 4\kappa}}\right)
\left(t^{\gamma_1} - a^{\sqrt{1 + 4\kappa}}t^{\gamma_2}\right)   ,   
 & \  a \le t < x , \\ \\
\left(\gamma_2 a^{\sqrt{1 + 4\kappa}} - \gamma_1 x^{\sqrt{1 + 4\kappa}}\right)
\left(t^{\gamma_1} - b^{\sqrt{1 + 4\kappa}}t^{\gamma_2}\right) ,   
 & \  x < t \le b.
\end{array}\right. \nonumber
\end{eqnarray}
Taking into account that $\gamma_1$ and $\gamma_2$ (see \eqref{gammas}) are the roots
of the quadratic equation $\gamma^2 + \gamma - \kappa = 0$, one can easily check that \eqref{Hminimiser} does indeed 
solve the equation on $(a, x)$ and $(x, b)$ and satisfy the above conditions at $t = a, x, b$. 

It follows from the above that \eqref{1dHSob} holds with $\mathcal{J}(\varphi)/(|x| \ln^2\frac{b}{a})$ 
in place of $C(\kappa)$ for any $\varphi \in \mathring{W}^1_2\big((a, b)\big)$, 
in particular for the one given by \eqref{Hminimiser}. 
Let us show that the equality in \eqref{1dHSob} is achieved for the latter.  
Indeed, let 
\begin{eqnarray*}
&& u(t) := \frac1\kappa\, (t - t^2\varphi'(t)) \\
&& = M(\kappa)
\left\{\begin{array}{ll}
\left(\gamma_2 b^{\sqrt{1 + 4\kappa}} - \gamma_1 x^{\sqrt{1 + 4\kappa}}\right)
\left(\gamma_1 t^{\gamma_1} - \gamma_2 a^{\sqrt{1 + 4\kappa}}t^{\gamma_2}\right) t  ,   
 & \  a \le t < x , \\ \\
\left(\gamma_2 a^{\sqrt{1 + 4\kappa}} - \gamma_1 x^{\sqrt{1 + 4\kappa}}\right)
\left(\gamma_1 t^{\gamma_1} - \gamma_2 b^{\sqrt{1 + 4\kappa}}t^{\gamma_2}\right) t ,   
 & \  x < t \le b, 
 \end{array}\right. \\
&&  M(\kappa) := -\frac{x^{-\gamma_1} \ln\frac{b}{a}}{\kappa 
(\gamma_1 - \gamma_2) \left(b^{\sqrt{1 + 4\kappa}} - a^{\sqrt{1 + 4\kappa}}\right)} .
\end{eqnarray*}
Then $u \in W^1_2([a, b])$, $u' = (1 - t^2\varphi''(t) - 2t\varphi'(t))/\kappa = G_x - \varphi$, 
and the Cauchy--Schwarz inequality used above is in fact an equality.

It is left to evaluate $\mathcal{J}(\varphi)/(|x| \ln^2\frac{b}{a})$ for \eqref{Hminimiser}. 
Using the equalities
\begin{eqnarray*}
&& 2\gamma_1 + 1 = \sqrt{1 + 4\kappa}\, , \ \ 2\gamma_2 + 1 = - \sqrt{1 + 4\kappa}\, , \\ 
&& 1 + \frac{\gamma_1^2}{\kappa} = \frac{\sqrt{1 + 4\kappa}}{\kappa}\, \gamma_1 , \ \ 
1 + \frac{\gamma_2^2}{\kappa} = -\frac{\sqrt{1 + 4\kappa}}{\kappa}\, \gamma_2 , \\
&& 2\kappa \gamma_1 - \gamma_1^2 = \sqrt{1 + 4\kappa}\, \gamma_1^2 , \ \ 
2\kappa \gamma_2 - \gamma_2^2 = - \sqrt{1 + 4\kappa}\, \gamma_2^2,
\end{eqnarray*}
one gets after a straightforward but not a particularly pleasant calculation
$$
\frac{\mathcal{J}(\varphi)}{|x| \ln^2\frac{b}{a}} = 
\frac{\gamma_1^2 x^{\sqrt{1 + 4\kappa}} +
\kappa \left(b^{\sqrt{1 + 4\kappa}} + a^{\sqrt{1 + 4\kappa}}\right) + 
\gamma_2^2 (ab)^{\sqrt{1 + 4\kappa}}x^{-\sqrt{1 + 4\kappa}}}{\kappa 
\sqrt{1 + 4\kappa}\, \left(b^{\sqrt{1 + 4\kappa}} - a^{\sqrt{1 + 4\kappa}}\right)}\, .
$$
Since the function $z \mapsto \gamma_1^2 z + \gamma_2^2 (ab)^{\sqrt{1 + 4\kappa}} z^{-1}$
does not have a local maximum for $z > 0$ ($z = x^{\sqrt{1 + 4\kappa}}$), the above fraction
achieves its maximum on $[a, b]$ at an endpoint. It is easy to see that the maximum is achieved
at $x = a$ and is equal to \eqref{1dHOpt}.
\end{proof}

\begin{remark}\label{HSob01}
{\rm Suppose $u \in W^1_2([0, 1])$ and $u(0) = 0$. Then using \eqref{1dHSob},
\eqref{1dHOpt} with $b = 1$ and $a \to 0+$ one gets 
\begin{eqnarray*}
\frac{|u(x)|^2}{|x|} \le \frac{1}{2\kappa}\,\left(1 + \sqrt{1 + 4\kappa}\right) 
\left(\int_0^1 |u'(t)|^2\, dt +  \kappa \int_0^1 \frac{|u(t)|^2}{|t|^2}\, dt\right) ,   \\  
\forall x \in (0, 1],  
\end{eqnarray*} 
and the right-hand side is finite due to Hardy's inequality. Note that
$$
\frac{1}{2\kappa}\,\left(1 + \sqrt{1 + 4\kappa}\right) < C(\kappa)
$$
for any $b > a > 0$ (see \eqref{1dHOpt}).}
\end{remark}

Starting with the representation
$$
u(x) = \int_0^1 H_x(t) u'(t)\, dt + \int_0^1 u(t)\, dt ,
$$
where
$$
H_x(t) := \left\{\begin{array}{ll} 
t ,  & \  0 \le t < x , \\ 
t - 1  ,  & \  x < t \le 1 ,
\end{array}\right.
$$
one can find, similarly to \eqref{1dHSob}, \eqref{1dHOpt}, the optimal constant $C_0(\kappa)$
in the estimate
$$
|u(x)|^2 \le C_0(\kappa) \left(\int_0^1 |u'(t)|^2\, dt +  \kappa \int_0^1 |u(t)|^2\, dt\right) , \ \ \ 
\forall u \in W^1_2([0, 1]) .
$$
The calculations are easier in this case, and one gets after an affine transformation of 
the independent variable that for any $\kappa > 0$,
\begin{eqnarray}\label{1dSob}
|u(x)|^2 \le C_0(\kappa) \left((b- a)\, \int_a^b |u'(t)|^2\, dt +  
\frac{\kappa}{b - a} \int_a^b |u(t)|^2\, dt\right) ,   \\  
\forall u \in W^1_2([a, b])  \nonumber
\end{eqnarray}
holds with
$$
C_0(\kappa; x) := \frac{\sinh(2\sqrt{\kappa}) + 
\sinh\left(2\sqrt{\kappa}\, \frac{x - a}{b - a}\right) + 
\sinh\left(2\sqrt{\kappa}\, \frac{b - x}{b - a}\right)}{4 \sqrt{\kappa}\, \sinh^2\sqrt{\kappa}}
$$
and becomes an equality for
$$
u(t) = \left\{\begin{array}{ll}
 \cosh\left(\sqrt{\kappa}\, \frac{b - x}{b - a}\right)\, \cosh\left(\sqrt{\kappa}\, \frac{t - a}{b - a}\right) ,   
 & \  a \le t < x , \\ \\
\cosh\left(\sqrt{\kappa}\, \frac{x - a}{b - a}\right)\, 
 \cosh\left(\sqrt{\kappa}\, \frac{b - t}{b - a}\right) ,   
 & \  x < t \le b.
\end{array}\right.
$$
The maximum of $C_0(\kappa; x)$ is achieved at $x = a$ and $x = b$, and is equal to
\begin{equation}\label{1dOpt}
C_0(\kappa) := 
\frac{\coth\sqrt{\kappa}}{\sqrt{\kappa}}\, .
\end{equation}
One can rewrite the inequality
\begin{eqnarray}\label{1dSobab}
|u(x)|^2 \le \frac{\coth\sqrt{\kappa}}{\sqrt{\kappa}}\, \left((b- a)\, \int_a^b |u'(t)|^2\, dt +  
\frac{\kappa}{b - a} \int_a^b |u(t)|^2\, dt\right) , \\ 
\forall x \in [a, b] ,  \ \  \forall u \in W^1_2([a, b])  \nonumber
\end{eqnarray}
in the following more symmetric form (with $\varrho = \sqrt{\kappa}$):
$$
|u(x)|^2 \le \coth\varrho \left(\frac{b - a}{\varrho}\, \int_a^b |u'(t)|^2\, dt +  
\frac{\varrho}{b - a} \int_a^b |u(t)|^2\, dt\right) ,  \ \ \ \forall \varrho > 0 ,
$$
and the equality here is achieved for 
$$
u(t) = \cosh\left(\varrho\, \frac{t - a}{b - a}\right) , \ x = b \ \  \mbox{ and } \ \ 
u(t) = \cosh\left(\varrho\, \frac{t - b}{b - a}\right) , \ x = a .
$$

\section{Appendix B: \ Proof of Theorem \ref{alpha1}}\label{AppB}

\begin{proof}
All potentials $V$ appearing in the proof are radial $V(x) = F(|x|)$ and satisfy the following
conditions: $V(x) = 0$ if $|x| \le a_0$, and 
\begin{equation}\label{Vbetta}
0 \le V(x) \le \frac{\gamma}{|x|^2 \ln^2 |x|}\, , \ \ \  |x| > a_0
\end{equation}
with certain $a_0, \gamma > 0$. We can take $\gamma = 1/3$ in i) and 
$\gamma = 1/(3\alpha_1)$ in ii), iii). According to \eqref{RadialBSest},
$$
N_- (\mathcal{E}_V) = N_- (\mathcal{E}_{\mathcal{R}, V}) + N_- (\mathcal{E}_{\mathcal{N}, V}) 
$$
for radial potentials. It follows from the proof of Theorem \ref{LaptNetrSol} or from Lemma 4.1 in
\cite{LS0} that  one can choose $a_0 > 0$ large enough to get 
$N_- (\mathcal{E}_{\mathcal{N}, V}) = 0$. We assume below that $a_0 > 1$ and 
that the last equality holds.
Then
$$
N_- (\mathcal{E}_V) = N_- (\mathcal{E}_{X, G}) ,
$$
where $G(t) = e^{2t} F(e^t)$
(see \eqref{V1d} and \eqref{Gform}). 
The usual approximation argument shows that $N_- (\mathcal{E}_{X, G}) =
N_- (\mathcal{E}_{\mathcal{H}_1, G})$, where $\mathcal{E}_{\mathcal{H}_1, G}$ denotes
the form \eqref{Gform} with the domain
$$
\mathcal{H}_1 := \left\{u \in W^1_{2, {\rm loc}}(\mathbb{R}) : \ 
\int_{\mathbb{R}} |u'|^2\, dt + \int_{\mathbb{R}} \frac{|u|^2}{1 + |t|^2}\, dt < \infty\right\} .
$$
Note that \eqref{Vbetta} is equivalent to 
$$
0 \le G(t) \le  \frac{\gamma}{|t|^2} , \ \ \ t > a_1 := \ln a_0 > 0 .
$$
We assume throughout the proof that $G(t) = 0$ for $t \le a_1$.

It follows from the above that it is sufficient to prove the theorem with 
$N_- (\mathcal{E}_{\mathcal{H}_1, \alpha G})$ in place of $N_- (\mathcal{E}_{\alpha V})$.

Let $0 < a < b < \infty$. We denote by 
$\mathcal{E}_{\mathcal{H}_1(a, b), G}$, $\mathcal{E}^{a,b}_G$
and $\mathcal{E}_{\mathcal{H}_1(b, \infty), G}$
the forms defined by \eqref{Gform} on the domains
\begin{eqnarray*}
&& \mathcal{H}_1(a, b) := \left\{u \in \mathcal{H}_1 : \ 
u(a) = 0 = u(b)\right\} , \ \ \ \mathring{W}^1_2\big((a, b)\big) , 
\ \ \mbox{ and } \\
&& \mathcal{H}_1(b, \infty) := \left\{u \in \mathcal{H}_1 : \ 
u(t) = 0 , \ t \le b\right\}
\end{eqnarray*}
respectively, and we also use the following notation 
\begin{equation}\label{Wab}
 \mathcal{E}^{a,b}_\beta[u] : = \int_a^b |u'(t)|^2 dt - 
\beta \int_a^b  \frac{|u(t)|^2}{t^2}\, dt ,  \ \ \   
 \mbox{Dom}\, (\mathcal{E}^{a,b}_\beta) =
\mathring{W}^1_2\big((a, b)\big). 
\end{equation}

i) Let $G(t) = 1/(3t^2)$ for $a_1 < t \le a_2$ and $G(t) = 1/(4t^2)$ for $t > a_2$, where
$a_2$ is chosen in such a way that 
$$
\ln\frac{a_2}{a_1} = \sqrt{3}\, \left(2N - \frac32\right)\pi , \ \ \mbox{ i.e. } \ \ 
\frac13 =
\frac14 + \left(\frac{(N - 3/4)\pi}{\ln\frac{a_2}{a_1}}\right)^2 .
$$
Let $\alpha \in (3/4, 1)$.
We will need two auxiliary functions $u_1$ and $u_2$ that solve the equation $u'' + \alpha G u = 0$
 on $(-\infty, a_1)$,  $(a_1, a_2)$, and $(a_2, +\infty)$. On each of these intervals, $G$ has the form
$\mbox{const}/t^2$, and the change of the independent variable $s = \ln t$ reduces the
equation to an ODE with constant coefficients. Let
\begin{eqnarray*}
&& \mu_1 := \sqrt{\frac{\alpha}{3} - \frac14} \, , \ \ \ 
\mu_2 := \frac{1- \sqrt{1 - \alpha}}{2}\, , \\ \\
&& u_1(t) := \left\{\begin{array}{ll} 
\rho a_1^{1/2} \sin(\mu_1 \ln a_1 + \delta)  ,  & \  t < a_1  , \\ 
\rho t^{1/2} \sin(\mu_1 \ln t + \delta)  ,  & \  a_1 < t < a_2 , \\
t^{\mu_2} , & \ t > a_2 , 
\end{array}\right. \\ \\
&& u_2(t) := \left\{\begin{array}{ll} 
0 ,  & \  t < a_1  , \\ 
t^{1/2} \sin(\mu_1 (\ln t - \ln a_1))  ,  & \  a_1 < t < a_2 , \\
a_2^{1/2 - \mu_2} \sin\left(\mu_1 \ln\frac{a_2}{a_1}\right)\,  t^{\mu_2} , & \ t > a_2 , 
\end{array}\right. 
\end{eqnarray*}
where the constants $\rho , \delta \in \mathbb{R}$ are chosen in such a way that
$u_1(a_2 - 0) = u_1(a_2 + 0)$, $u'_1(a_2 - 0) = u'_1(a_2 + 0)$. It is easy to see that
$u_1 \in \mathcal{H}_1$ and $u_2 \in \mathcal{H}_1(a_1, \infty)$ are indeed solutions
on the above intervals and that $u_1(a_1 - 0) = u_1(a_1 + 0)$, $u_2(a_2 - 0) = u_2(a_2 + 0)$.
Integration by parts gives the following 
\begin{eqnarray*}
&& \mathcal{E}_{\mathcal{H}_1, \alpha G}(u_1, u) = 0 ,  \ \ \forall u \in \mathcal{H}_1(a_1, \infty) , \ 
\mbox{ in particular, }  \ \ \mathcal{E}_{\mathcal{H}_1, \alpha G}(u_1, u_2) = 0 , \\
&& \mathcal{E}_{\mathcal{H}_1, \alpha G}(u_2, u) = 0 , \ \ \ \forall u \in \mathcal{H}_1(a_1, a_2) , \\
&& \mathcal{E}_{\mathcal{H}_1, \alpha G}[u_1] = u_1(a_1) (u'_1(a_1 - 0) - u'_1(a_1 + 0)) \\
&&  \ \ \ \ \   =
-\rho^2 \sin^2(\mu_1 \ln a_1 + \delta) \left(\frac12 + \mu_1 \cot(\mu_1 \ln a_1 + \delta)\right) , \\
&& \mathcal{E}_{\mathcal{H}_1, \alpha G}[u_2] = u_1(a_2) (u'_2(a_2 - 0) - u'_2(a_2 + 0)) \\
&&  \ \ \ \ \   = \sin^2\left(\mu_1 \ln\frac{a_2}{a_1}\right) 
\left(\frac12 - \mu_2 + \mu_1 \cot\left(\mu_1 \ln\frac{a_2}{a_1}\right)\right) .
\end{eqnarray*}
If $\alpha$ is close to 1, then $\mu_1 \ln\frac{a_2}{a_1}$ is close to
$$
\frac{1}{2\sqrt{3}}\, \ln\frac{a_2}{a_1} = \left(N - \frac34\right) \pi ,
$$
and hence $\mathcal{E}_{\mathcal{H}_1, \alpha G}[u_2] > 0$. The condition
$$
\frac{u'_1(a_2 - 0)}{u_1(a_2 - 0)} = \frac{u'_1(a_2 + 0)}{u_1(a_2 + 0)}
$$
is equivalent to $1/2 + \mu_1 \cot(\mu_1 \ln a_2 + \delta) = \mu_2$. Again, if $\alpha$ is close to 1, 
then
$\mu_2 < 1/2$ is close to $1/2$. Hence $\cot(\mu_1 \ln a_2 + \delta)$ is a small negative number,
i.e. $\mu_1 \ln a_2 + \delta = (m + 1/2) \pi + \epsilon$, where $\epsilon$ is a small positive number 
and $m \in \mathbb{Z}$. Consequently, 
$$
\mu_1 \ln a_1 + \delta = (m + 1/2) \pi + \epsilon - \mu_1 \ln\frac{a_2}{a_1}
$$
is close to $(m - N + 1 + 1/4) \pi + \epsilon$, and $\mathcal{E}_{\mathcal{H}_1, \alpha G}[u_1] < 0$.

It is easy to see that any $u \in \mathcal{H}_1$ admits a unique representation
$$
u = d_1 u_1 + d_2 u_2 + u_0 , \ \ \ d_1, d_2 \in \mathbb{C} , \ u_0 \in \mathcal{H}_1(a_1, a_2) ,
$$ 
and it follows from the above that
$$
\mathcal{E}_{\mathcal{H}_1, \alpha G}[u]  = |d_1|^2 \mathcal{E}_{\mathcal{H}_1, \alpha G} [u_1] +
|d_2|^2 \mathcal{E}_{\mathcal{H}_1, \alpha G}[u_2] +
\mathcal{E}_{\mathcal{H}_1, \alpha G}[u_0] .
$$
Since $\mathcal{E}_{\mathcal{H}_1, \alpha G}[u_1] < 0$ and 
$\mathcal{E}_{\mathcal{H}_1, \alpha G}[u_2] > 0$, we have
\begin{eqnarray*}
&& N_- (\mathcal{E}_{\mathcal{H}_1, \alpha G}) = 1 + 
N_- (\mathcal{E}_{\mathcal{H}_1(a_1, a_2), \alpha G}) \\
&& = 1 + N_- \left(\mathcal{E}^{a_1,a_2}_{\alpha/3}\right) + 
N_- \left(\mathcal{E}_{\mathcal{H}_1(a_2, \infty), \alpha G}\right) = 1 + (N - 1) + 0 = N ,
\end{eqnarray*}
where the penultimate equality follows from Lemma \ref{Dirichlet} (see below) and
Hardy's inequality (see, e.g., \cite[Theorem 327]{HLP}). Hence
$$
N_- (\mathcal{E}_{\mathcal{H}_1, G}) = 
\lim_{\alpha \to 1 - 0} N_- (\mathcal{E}_{\mathcal{H}_1, \alpha G}) = N .
$$
On the other hand,
$$
N_- (\mathcal{E}_{\mathcal{H}_1, \alpha G}) \ge 
N_- \left(\mathcal{E}_{\mathcal{H}_1(a_2, \infty), \alpha G}\right) = +\infty , \ \ \ \forall \alpha > 1,
$$
where the last equality holds because $\alpha G > 1/4$ on $(a_2, \infty)$ (see, e.g., 
Lemma \ref{Dirichlet}).  \\

ii) Take any $\alpha_0 \in (0, \alpha_1)$, $N_0 = 0$, and define $a_2, a_3, \dots$
successively by 
$$
\sqrt{\frac{\alpha_k}{2(\alpha_k + \alpha_{k - 1})} - \frac14}\, \ln\frac{a_{k + 1}}{a_k} 
= \left(N_{k - 1} + 2 + \frac12\right)\pi , \ \ \ k \in \mathbb{N} .
$$
Let
$$
G(t) := \frac{1}{2(\alpha_k + \alpha_{k - 1})\, t^2}\, ,  \ \ \ a_k < t < a_{k + 1} , \ k \in \mathbb{N} .
$$
Then 
\begin{eqnarray*}
&& N_- (\mathcal{E}_{\mathcal{H}_1, \alpha_k G}) \le 2 + 
N_- (\mathcal{E}_{\mathcal{H}_1(a_1, a_{k + 1}), \alpha G}) \\
&& = 2 + N_- \left(\mathcal{E}^{a_1, a_{k + 1}}_{\alpha_k G}\right) + 
N_- \left(\mathcal{E}_{\mathcal{H}_1(a_{k + 1}, \infty), \alpha_k G}\right) = 
2 + N_- \left(\mathcal{E}^{a_1, a_{k + 1}}_{\alpha_k G}\right)
\end{eqnarray*}
due to Hardy's inequality, and
\begin{eqnarray*}
N_- (\mathcal{E}_{\mathcal{H}_1, \alpha_{k + 1} G}) \ge 
N_- \left(\mathcal{E}^{a_1, a_{k + 1}}_{\alpha_{k + 1} G}\right) + 
N_- \left(\mathcal{E}^{a_{k + 1}, a_{k + 2}}_{\alpha_{k + 1} G}\right) \\
\ge N_- \left(\mathcal{E}^{a_1, a_{k + 1}}_{\alpha_k G}\right) + N_k + 2
\ge N_- (\mathcal{E}_{\mathcal{H}_1, \alpha_k G}) + N_k
\end{eqnarray*}
due to Lemma \ref{Dirichlet}. \\

iii) Take $k_0 \in \mathbb{N}$ such that
\begin{equation}\label{18}
\sum_{k \ge k_0 - 1} \frac{1 - \alpha_{k + 1}}{\alpha_{k + 1} - \alpha_k} \le \frac18
\end{equation}
and set $\beta_k := \alpha_{k_0 - 1 + k}$, $k = 0, 1, \dots$,
\begin{eqnarray}\label{epsilonk}
&& \mu_{1, k, j} := \sqrt{\frac{\beta_k}{2(\beta_j + \beta_{j - 1})} - \frac14}\, , \nonumber \\
&& \mu_{1, k} := \mu_{1, k, k} = 
\frac12\, \sqrt{\frac{\beta_k - \beta_{k - 1}}{\beta_k + \beta_{k - 1}}}\, ,  \nonumber \\
&& \mu_{2, k} := \frac{1 - \sqrt{1 - \frac{2\beta_k}{\beta_{k + 1} + \beta_k}}}{2} =
\frac12 - \frac12\, \sqrt{\frac{\beta_{k + 1} - \beta_k}{\beta_{k + 1} + \beta_k}}\, ,  \nonumber \\
&& \epsilon_k := \frac{8\pi}{9}\, \frac{1 - \beta_k}{\beta_k - \beta_{k - 1}} < \frac{\pi}{9}\, , 
\ \ \ k \in \mathbb{N} , \ j = 1, \dots,  k .
\end{eqnarray}
Define $a_2, a_3, \dots$
successively by 
$$
\mu_{1, k} \ln\frac{a_{k + 1}}{a_k} = \pi + \epsilon_k , \ \ \ k \in \mathbb{N} .
$$
Then
\begin{eqnarray}\label{pi8}
&& \pi + \epsilon_j \le \mu_{1, k, j} \ln\frac{a_{j + 1}}{a_j}
< \sqrt{\frac{1}{2(\beta_j + \beta_{j - 1})} - \frac14} \, \ln\frac{a_{j + 1}}{a_j} \nonumber \\
&& = \sqrt{\frac{2 - \beta_j - \beta_{j - 1}}{\beta_j - \beta_{j - 1}}}\, (\pi + \epsilon_j) =
\sqrt{1 + 2\frac{1 - \beta_j}{\beta_j - \beta_{j - 1}}}\, (\pi + \epsilon_j)  \nonumber \\
&& \le \left(1 + \frac{1 - \beta_j}{\beta_j - \beta_{j - 1}}\right) (\pi + \epsilon_j) =
\pi + \epsilon_j + \pi\, \frac{1 - \beta_j}{\beta_j - \beta_{j - 1}}\\
&& + \epsilon_j\, \frac{1 - \beta_j}{\beta_j - \beta_{j - 1}} 
< \pi + 2\pi\, \frac{1 - \beta_j}{\beta_j - \beta_{j - 1}}\, , \ \ \ j = 1, \dots, k . \nonumber 
\end{eqnarray}
Let
$$
G(t) := \frac{1}{2(\beta_k + \beta_{k - 1})\, t^2}\, ,  \ \ \ a_k < t < a_{k + 1} , \ k \in \mathbb{N} .
$$
Similarly to part i) of the proof, define
\begin{eqnarray*}
&& u_{1, k}(t) := \left\{\begin{array}{ll} 
\rho_{1, 1} a_1^{1/2} \sin(\mu_{1, k, 1} \ln a_1 + \delta_{1, 1})  ,  & \  t < a_1  , \\  \\
\rho_{1, j} t^{1/2} \sin(\mu_{1, k, j} \ln t + \delta_{1, j})  ,  & \  a_j < t < a_{j + 1} , \\ 
& \ j = 1, \dots,  k, \\ \\
t^{\mu_{2, k}} , & \ t > a_{k + 1} , 
\end{array}\right. \\ \\
&& u_{2, k}(t) := \left\{\begin{array}{ll} 
0 ,  & \  t < a_1  , \\ \\
t^{1/2} \sin(\mu_{1, k, 1} (\ln t - \ln a_1))  ,  & \  a_1 < t < a_2 , \\ \\
\rho_{2, j} t^{1/2} \sin(\mu_{1, k, j} \ln t + \delta_{2, j})  ,  & \  a_j < t < a_{j + 1} , \\ 
& \ j = 2, \dots,  k, \\ \\
\rho_{2, k} a_{k + 1}^{1/2 - \mu_{2, k}} \sin\left(\mu_{1, k} \ln a_{k + 1} + 
\delta_{2, k}\right)\,  t^{\mu_{2, k}} , & \  t > a_{k + 1} , 
\end{array}\right. 
\end{eqnarray*}
where the constants $\rho_{l, j} , \delta_{l, j} \in \mathbb{R}$, $l = 1, 2$ are chosen in such a way 
that 
\begin{eqnarray*}
&& u_{1, k}(a_j - 0) = u_{1, k}(a_j + 0),  \ u'_{1, k}(a_j - 0) = u'_{1, k}(a_j + 0) ,   \ j = 2, \dots, k +1 , \\
&& u_{2, k}(a_j - 0) = u_{2, k}(a_j + 0),  \ u'_{2, k}(a_j - 0) = u'_{2, k}(a_j + 0) ,   \ j = 2, \dots, k .
\end{eqnarray*}
If $k = 1$, one needs to define $u_{2, k}$ in a slightly different way, which is closer to the definition
in part i):
$$
u_{2, 1}(t) := \left\{\begin{array}{ll} 
0 ,  & \  t < a_1  , \\ 
t^{1/2} \sin(\mu_{1, 1} (\ln t - \ln a_1))  ,  & \  a_1 < t < a_2 , \\
a_2^{1/2 - \mu_{2, 1}} \sin\left(\mu_{1, 1} \ln\frac{a_2}{a_1}\right)\,  t^{\mu_{2, 1}} , & \ t > a_2 .
\end{array}\right. 
$$
It is easy to see that
$u_{1, k} \in \mathcal{H}_1$ and $u_{2, k} \in \mathcal{H}_1(a_1, \infty)$ are solutions of
the equation $u'' + \beta_k G u = 0$ on the intervals 
$(-\infty, a_1)$,  $(a_j, a_{j + 1})$, $j = 1, \dots, k$, and $(a_{k + 1}, +\infty)$,
and that $u_{1, k}(a_1 - 0) = u_{1, k}(a_1 + 0)$, $u_{2, k}(a_{k + 1} - 0) = u_{2, k}(a_{k + 1} + 0)$.

Exactly as in part i), any $u \in \mathcal{H}_1$ admits a unique representation
$$
u = d_1 u_{1, k} + d_2 u_{2, k} + u_0 , \ \ \ d_1, d_2 \in \mathbb{C} , \ 
u_0 \in \mathcal{H}_1(a_1, a_{k + 1}) ,
$$ 
and one has
\begin{equation}\label{d1d2}
\mathcal{E}_{\mathcal{H}_1, \beta_k G}[u]  = 
|d_1|^2 \mathcal{E}_{\mathcal{H}_1, \beta_k G} [u_{1, k}] 
+ |d_2|^2 \mathcal{E}_{\mathcal{H}_1, \beta_k G}[u_{2, k}] +
\mathcal{E}_{\mathcal{H}_1, \beta_k G}[u_0] , 
\end{equation}
\begin{eqnarray*}
&& \mathcal{E}_{\mathcal{H}_1, \beta_k G}[u_{1, k}] = \\
&& -\rho_{1, 1}^2 \sin^2(\mu_{1, k, 1} \ln a_1 + \delta_{1, 1}) 
\left(\frac12 + \mu_{1, k, 1} \cot(\mu_{1, k, 1} \ln a_1 + \delta_{1, 1})\right) ,  \\
&& \mathcal{E}_{\mathcal{H}_1, \beta_k G}[u_{2, k}] = 
\rho_{2, k}^2 \sin^2(\mu_{1, k} \ln a_{k + 1} + \delta_{2, k}) 
\Big(\frac12 -  \mu_{2, k}   \\
&&  \ \ \ \ \  + \mu_{1, k} \cot(\mu_{1, k} \ln a_{k + 1} + \delta_{2, k})\Big) , 
\ \ \ k > 1 ,  \\
&& \mathcal{E}_{\mathcal{H}_1, \beta_1 G}[u_{2, 1}] =   \\
&& \sin^2\left(\mu_{1, 1} \ln\frac{a_2}{a_1}\right) 
\left(\frac12 - \mu_{2, 1} + \mu_{1, 1} \cot\left(\mu_{1, 1} \ln\frac{a_2}{a_1}\right)\right) . 
\end{eqnarray*}
Similarly to part i), it follows from \eqref{epsilonk} that 
$\mathcal{E}_{\mathcal{H}_1, \beta_1 G}[u_{2, 1}] > 0$. Let us show that 
$\mathcal{E}_{\mathcal{H}_1, \beta_k G}[u_{2, k}] > 0$ for $k > 1$ as well.
It follows from \eqref{18} and \eqref{pi8} that $u_{2, k}$ has exactly one zero in
$(a_1, a_2)$ and that
$$
\pi + \epsilon_1 < \mu_{1, k, 1} \ln\frac{a_2}{a_1} < \pi + 2\pi\, \frac{1 - \beta_1}{\beta_j - \beta_0} <
\pi + \frac{\pi}{4}\, .
$$
The condition
$$
\frac{u'_{2, k}(a_2 - 0)}{u_{2, k}(a_2 - 0)} = \frac{u'_{2, k}(a_2 + 0)}{u_{2, k}(a_2 + 0)}
$$
is equivalent to 
$$
\mu_{1, k, 1} \cot\left(\mu_{1, k, 1} \ln\frac{a_2}{a_1}\right) = 
\mu_{1, k, 2} \cot(\mu_{1, k, 2} \ln a_2 + \delta_{2, 2}) .
$$
Since $\mu_{1, k, 1} > \mu_{1, k, 2}$, we get
$$
\pi < \mu_{1, k, 2} \ln a_2 + \delta_{2, 2} + m\pi <
\mu_{1, k, 1} \ln\frac{a_2}{a_1} < \pi + 2\pi\, \frac{1 - \beta_1}{\beta_1 - \beta_0} 
$$
for some $m \in \mathbb{Z}$. Using \eqref{18} and \eqref{pi8} again, we see that
$u_{2, k}$ has exactly one zero in $(a_2, a_3)$ and that
$$
2\pi < \mu_{1, k, 2} \ln a_3 + \delta_{2, 2} + m\pi < 2\pi + 
2\pi\, \left(\frac{1 - \beta_1}{\beta_1 - \beta_0} + \frac{1 - \beta_2}{\beta_2 - \beta_1}\right) <
2\pi + \frac{\pi}{4}\, .
$$
Continuing the above argument, we show that $u_{2, k}$ has exactly one zero in 
each interval $(a_j, a_{j + 1})$, $j = 1, \dots, k$, and that
$$
k \pi < \mu_{1, k} \ln a_{k + 1} + \delta_{2, k} + n\pi < k \pi + 
2\pi\, \sum_{j = 1}^k \frac{1 - \beta_j}{\beta_j - \beta_{j - 1}}  <
k \pi + \frac{\pi}{4}
$$
for some $n \in \mathbb{Z}$. This inequality implies that 
$\mathcal{E}_{\mathcal{H}_1, \beta_k G}[u_{2, k}] > 0$.

Our next task is to show that $\mathcal{E}_{\mathcal{H}_1, \beta_k G}[u_{1, k}] < 0$.
Suppose the contrary: $\mathcal{E}_{\mathcal{H}_1, \beta_k G}[u_{1, k}] \ge 0$.
Then there exists $\ell \in \mathbb{Z}$ such that
$$
\omega_k := \mbox{arccot} \left(-\frac1{2\mu_{1, k, 1}} \right) \le 
\mu_{1, k, 1} \ln a_1 + \delta_{1, 1} + \ell\pi \le \pi .
$$
It follows from \eqref{18} and \eqref{pi8} that
$$
\omega_k + \pi  < 
\mu_{1, k, 1} \ln a_2 + \delta_{1, 1} + \ell\pi <  2\pi + 2\pi\, \frac{1 - \beta_1}{\beta_j - \beta_0} \, ,
$$
and one shows as above that
$$
\omega_k + \pi  < 
\mu_{1, k, 2} \ln a_2 + \delta_{1, 2} + m \pi <  2\pi + 2\pi\, \frac{1 - \beta_1}{\beta_j - \beta_0}
$$
for some $m \in \mathbb{Z}$. Continuing as above, one gets
$$
\omega_k + k \pi < \mu_{1, k} \ln a_{k + 1} + \delta_{1, k} + n\pi < (k + 1) \pi + 
2\pi\, \sum_{j = 1}^k \frac{1 - \beta_j}{\beta_j - \beta_{j - 1}}  <
(k + 1) \pi + \frac{\pi}{4}
$$
for some $n \in \mathbb{Z}$. Then
\begin{eqnarray*}
&& \mbox{either } \ \ \frac12 + \mu_{1, k} \cot(\mu_{1, k} \ln a_{k + 1}  + \delta_{1, k}) < 0 \\
&& \mbox{or } \ \ \frac12 + \mu_{1, k} \cot(\mu_{1, k} \ln a_{k + 1}  + \delta_{1, k}) >  \frac12\, .
\end{eqnarray*}
On the other hand, the condition
$$
\frac{u'_{1, k}(a_{k + 1} - 0)}{u_{1, k}(a_{k + 1} - 0)} = 
\frac{u'_{1, k}(a_{k + 1} + 0)}{u_{1, k}(a_{k + 1} + 0)}
$$
is equivalent to 
$$
\frac12 + \mu_{1, k} \cot(\mu_{1, k} \ln a_{k + 1}  + \delta_{1, k}) = \mu_{2, k} \in (0, 1/2) .
$$
The obtained contradiction shows that $\mathcal{E}_{\mathcal{H}_1, \beta_k G}[u_{1, k}] < 0$.
Now it follows from \eqref{d1d2} that 
\begin{eqnarray*}
&& N_- (\mathcal{E}_{\mathcal{H}_1, \beta_k G}) = 1 + 
N_- (\mathcal{E}_{\mathcal{H}_1(a_1, a_{k + 1}), \beta_k G}) \\
&& = 1 + N_- \left(\mathcal{E}^{a_1, a_{k + 1}}_{\beta_k G}\right) + 
N_- \left(\mathcal{E}_{\mathcal{H}_1(a_{k + 1}, \infty), \beta_k G}\right) = 
1 + N_- \left(\mathcal{E}^{a_1, a_{k + 1}}_{\beta_k G}\right)  ,
\end{eqnarray*}
where the last equality follows from Hardy's inequality. It is easy to see that
$$
N_- \left(\mathcal{E}^{a_1, a_{k + 1}}_{\beta_k G}\right) \ge 
\sum_{j = 1}^k N_- \left(\mathcal{E}^{a_j, a_{j + 1}}_{\beta_k G}\right) = k
$$
(see \eqref{pi8} and Lemma \ref{Dirichlet}). If 
$N_- \left(\mathcal{E}^{a_1, a_{k + 1}}_{\beta_k G}\right) > k$, then there exists 
$\lambda < 0$ and a nontrivial solution $u \in \mathring{W}^1_2\big((a_1, a_{k + 1})\big)$
of $u'' + (\beta_k G + \lambda) u = 0$
that has $k$ zeros in $(a_1, a_{k + 1})$ (see, e.g., \cite[Ch. I, Theorem 3.3]{LeSa} or 
\cite[Theorem 13.2]{Wei}). Then $u_{2, k}$ has to have at least $k + 1$ zeros in 
$(a_1, a_{k + 1})$ (see \cite[Ch. I, Theorem 3.1]{LeSa} or \cite[Theorem 13.3]{Wei}). 
On the other hand, we have shown that $u_{2, k}$ has exactly $k$ zeros in 
$(a_1, a_{k + 1})$. Hence $N_- \left(\mathcal{E}^{a_1, a_{k + 1}}_{\beta_k G}\right)$
cannot be larger than $k$, i.e. $N_- \left(\mathcal{E}^{a_1, a_{k + 1}}_{\beta_k G}\right) = k$,
and $N_- (\mathcal{E}_{\mathcal{H}_1, \beta_k G}) = k + 1$. Finally,
\begin{eqnarray*}
N_- (\mathcal{E}_{\mathcal{H}_1, \alpha_{k + 1} G}) - 
N_- (\mathcal{E}_{\mathcal{H}_1, \alpha_k G}) = 
N_- (\mathcal{E}_{\mathcal{H}_1, \beta_{k - k_0 + 2} G}) - 
N_- (\mathcal{E}_{\mathcal{H}_1, \beta_{k - k_0 + 1} G}) \\
= 1, \ \ \  \forall k \ge k_0 .
\end{eqnarray*}

\end{proof}

\begin{lemma}\label{Dirichlet} 
The following equality holds for the form \eqref{Wab}
$$
N_- \left(\mathcal{E}^{a,b}_\beta\right) = 
\left\{\begin{array}{ll} 
0  ,  & \  \beta \le \ \frac14 + \left(\frac{\pi}{\ln\frac{b}{a}}\right)^2 , \\ \\
N  ,  & \  \frac14 + \left(\frac{N\pi}{\ln\frac{b}{a}}\right)^2 < \beta \le
\frac14 + \left(\frac{(N + 1)\pi}{\ln\frac{b}{a}}\right)^2 , \ \ \ N \in \mathbb{N} .
\end{array}\right.
$$
\end{lemma}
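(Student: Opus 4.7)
The plan is to reduce the form $\mathcal{E}^{a,b}_\beta$ on $\mathring{W}^1_2\big((a,b)\big)$ to a constant-coefficient Schr\"odinger form on an interval via a logarithmic change of variable, and then read off $N_-$ from the Dirichlet eigenvalues on that interval.

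First, I substitute $u(t) = t^{1/2} v(\ln t)$, so that $u$ and $v$ simultaneously vanish at the endpoints. A short computation gives
$$
u'(t) = t^{-1/2}\left(\tfrac12 v(\ln t) + v'(\ln t)\right),
$$
and hence, with $s = \ln t$, $L := \ln(b/a)$, and the new interval $J := (\ln a, \ln b)$,
$$
\int_a^b |u'(t)|^2\, dt = \int_J \left|\tfrac12 v(s) + v'(s)\right|^2 ds, \qquad
\int_a^b \frac{|u(t)|^2}{t^2}\, dt = \int_J |v(s)|^2 ds.
$$
Expanding the first integrand and using $v(\ln a) = v(\ln b) = 0$ to kill the cross term $\int v v' = \frac12[v^2]_J = 0$, I obtain
$$
\mathcal{E}^{a,b}_\beta[u] = \int_J |v'(s)|^2\, ds - \left(\beta - \tfrac14\right) \int_J |v(s)|^2\, ds.
$$
Since the map $u \mapsto v$ is a linear bijection $\mathring{W}^1_2\big((a,b)\big) \to \mathring{W}^1_2(J)$ preserving negativity of the quadratic form, $N_-(\mathcal{E}^{a,b}_\beta)$ equals $N_-$ of the right-hand side.

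Next, after the trivial translation mapping $J$ to $(0,L)$, this is the negative-eigenvalue count of the Dirichlet problem for $-v'' - \mu v$ on $(0,L)$ with $\mu := \beta - \tfrac14$. The Dirichlet spectrum of $-d^2/ds^2$ on $(0,L)$ is $\{(n\pi/L)^2 : n \in \mathbb{N}\}$ with eigenfunctions $\sin(n\pi s/L)$, so the variational/spectral theorem gives
$$
N_-\big(\mathcal{E}^{a,b}_\beta\big) = \#\left\{n \in \mathbb{N} : \left(\tfrac{n\pi}{L}\right)^2 < \mu\right\} = \#\left\{n \in \mathbb{N} : n < \tfrac{L}{\pi}\sqrt{\mu_+}\right\}.
$$
Reading off the cases: if $\beta \le \tfrac14 + (\pi/L)^2$ then $\mu \le (\pi/L)^2$ and no $n \ge 1$ qualifies, while if $(N\pi/L)^2 < \mu \le ((N+1)\pi/L)^2$ then exactly $n = 1,\dots,N$ qualify. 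This yields the claimed formula.

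I do not expect any serious obstacle here: the substitution $u = t^{1/2}v(\ln t)$ is the canonical one that diagonalises the Hardy weight $1/t^2$, the boundary term vanishes for free under Dirichlet conditions, and the rest is explicit Sturm-Liouville theory on a bounded interval. The only mildly delicate point is verifying that the substitution is a bijection of the correct Sobolev spaces with the correct boundary conditions, which is immediate from the pointwise formulas relating $u$, $v$ and their derivatives away from the endpoints, together with $t^{1/2} > 0$ on $[a,b]$.
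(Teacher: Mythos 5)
Your proof is correct, and it takes a genuinely different route from the paper's. You perform the ground-state (Liouville) substitution $u(t) = t^{1/2} v(\ln t)$ at the level of the quadratic form, which turns $\mathcal{E}^{a,b}_\beta$ into the constant-coefficient Dirichlet form $\int_J |v'|^2\,ds - \left(\beta - \tfrac14\right)\int_J |v|^2\,ds$ on an interval of length $L = \ln\frac{b}{a}$, and then you read off $N_-$ directly from the explicitly known Dirichlet spectrum $\left\{\left(\frac{n\pi}{L}\right)^2\right\}_{n\ge 1}$; the computation of the transformed form (including the vanishing of the cross term $\int \mathrm{Re}\,\bar v v'$ under the Dirichlet boundary conditions) and the strict-versus-nonstrict inequalities at the thresholds are handled correctly, so the case distinction in the statement comes out exactly right. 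The paper instead works with the operator $A_\beta = -\frac{d^2}{dt^2} - \frac{\beta}{t^2}$ in the original variable: it uses Hardy's inequality to see that $N_-=0$ for $\beta \le \tfrac14$, invokes continuous and monotone dependence of the (simple, discrete) eigenvalues on $\beta$ (citing Kato), and counts how many eigenvalues cross zero by solving $-u'' - \beta u/t^2 = 0$ with $u(a)=u(b)=0$, which after the same logarithmic change of variable gives the threshold values $\beta = \tfrac14 + \left(\frac{N\pi}{\ln(b/a)}\right)^2$. What your approach buys is self-containedness and directness: no perturbation-theoretic input is needed, and the whole count reduces to the textbook spectrum of the Dirichlet Laplacian on an interval; what the paper's approach buys is that it stays entirely at the operator level and only needs the explicit solution of one ODE at the crossing values, a pattern it reuses elsewhere (e.g.\ in Appendix B). The only point worth making explicit in your write-up is the (easy) verification that $u \mapsto v$ maps $\mathring{W}^1_2\big((a,b)\big)$ bijectively onto $\mathring{W}^1_2(J)$, which you already note follows from $0 < a < b < \infty$ and the smoothness and positivity of $t^{1/2}$ on $[a,b]$.
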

\begin{proof}
$ \mathcal{E}^{a,b}_\beta$ is the quadratic form of the self-adjoint operator 
$A_\beta := -\frac{d^2}{dt^2} - \frac{\beta}{t^2}$ on $L_2([a, b])$ with the domain
$W^2_2([a, b])\cap\mathring{W}^1_2\big((a, b)\big)$. The spectrum of $A_\beta$ is
discrete and consists of simple eigenvalues. It follows from Hardy's inequality 
that the eigenvalues are positive for $\beta \le 1/4$.   
As $\beta$ increases, the eigenvalues move continuously (see, e.g., \cite[Theorem V.4.10]{Kat})
to the left, and $N_- \left(\mathcal{E}^{a,b}_\beta\right)$ increases by one when an eigenvalue
crosses 0. This happens for those values of $\beta$ for which 0 is an eigenvalue of $A_\beta$, i.e.
when
$$
-u'' - \frac{\beta}{t^2}\, u = 0 , \ \ \ u(a) = 0 = u(b) , \ \ \ \left(\beta > \frac14\right)
$$ 
has a nontrivial solution. The change of the independent variable $s = \ln t$ reduces this
equation to an ODE with constant coefficients, and one finds that the solutions of the original
equation that satisfy the condition $u(a) = 0$ are multiples of
$$
t^{1/2} \sin\left(\sqrt{\beta - \frac14}\, \big(\ln t - \ln a\big)\right) .
$$ 
The latter satisfies the condition $u(b) = 0$ if and only if
$$
\sqrt{\beta - \frac14}\, \ln\frac{b}{a} = N\pi , \ \ \mbox{ i.e. } \ \
\beta = \frac14 + \left(\frac{N\pi}{\ln\frac{b}{a}}\right)^2 ,  \ \ \ N \in \mathbb{N} .
$$
\end{proof}

\textsc{Acknowledgement.}
This work has been strongly influenced by Michael Solomyak and I am very grateful for his advice.
I am also grateful to Alexander Grigor'yan, Ari Laptev, and Yuri Safarov 
for very helpful comments and 
suggestions. A part of the paper was written during my stay at Institut Mittag-Leffler. The unique working
conditions provided by its Research in Peace programme are gratefully acknowledged.


\begin{thebibliography}{1}

\bibitem{Ad} R.A. Adams, 
{\em Sobolev Spaces.} Academic Press, New York, 1975.

\bibitem{Ben1} C. Bennett, 
Intermediate spaces and the class $L \log^+ L$, 
{\it Ark. Mat.} \textbf{11} (1973), 215--228.

\bibitem{BerShu} F.A. Berezin and M.A. Shubin, 
{\em The Schr\"odinger Equation.} Kluwer, Dordrecht etc., 1991. 

\bibitem{BL} M.Sh. Birman and  A. Laptev,
The negative discrete spectrum of a two-dimensional Schr\"odinger operator,
{\it Commun. Pure Appl. Math.} \textbf{49}, 9 (1996), 967--997.

\bibitem{BirSol} M.Sh. Birman and M.Z. Solomyak, 
{\em Spectral Theory of Self-Adjoint Operators in Hilbert Space.} Kluwer, Dordrecht etc., 1987.

\bibitem{BS} M.Sh. Birman and  M.Z. Solomyak,
Estimates for the number of negative eigenvalues of the Schr\"odinger operator and 
its generalizations. 
In:  {\em Estimates and Asymptotics for Discrete Spectra of Integral and Differential Equations.}
Adv. Sov. Math. \textbf{7} (1991), 1--55.

\bibitem{CKMW} K. Chadan, N.N. Khuri, A. Martin and T.T. Wu, 
Bound states in one and two spatial dimensions. 
{\it J. Math. Phys.} \textbf{44}, 2 (2003), 406--422. 

\bibitem{Cia} A. Cianchi, 
Moser--Trudinger trace inequalities. 
{\it Adv. Math.} \textbf{217}, 5 (2008), 2005--2044.


\bibitem{GN} A. Grigor'yan and N. Nadirashvili, Negative eigenvalues of two-dimensional 
Schr\"odinger operators, (arXiv:1112.4986, version 1).

\bibitem{Grig} A. Grigor'yan and N. Nadirashvili, Negative eigenvalues of two-dimensional 
Schr\"odinger operators, (to appear, arXiv:1112.4986).

\bibitem{GNY} A. Grigor'yan, Yu. Netrusov  and S.-T. Yau, Eigenvalues of elliptic operators and geometric applications, {\it Surveys in Differential Geometry} \textbf{IX} (2004), 147--218.

\bibitem{Gri} P. Grisvard,
Commutativit\'e de deux foncteurs d'interpolation et applications. 
{\it J. Math. Pures Appl.} \textbf{IX}, S\'er. 45 (1966), 143--290.

\bibitem{Guz} M. de Guzm\'an, 
{\em Differentiation of Integrals in $\mathbb{R}^n$. }
Springer, Berlin--Heidelberg--New York, 1975.

\bibitem{HLP} G.H. Hardy, J.E. Littlewood, and G. P\'olya,
{\em Inequalities.}  Cambridge University Press, Cambridge, 1988.

\bibitem{HMT} J.A. Hempel, G.R. Morris and N.S. Trudinger,
On the sharpness of a limiting case of the Sobolev imbedding theorem,
{\it Bull. Aust. Math. Soc.} \textbf{3} (1970), 369--373.

\bibitem{Kat} T. Kato, 
{\em Perturbation Theory for Linear Operators.} Springer, 
Berlin--Heidelberg--New York, 1966.


\bibitem{KMW} N.N. Khuri, A. Martin and T.T. Wu, Bound states in $n$ dimensions 
(especially $n = 1$ and $n = 2$), {\it Few Body Syst.} \textbf{31} (2002), 83--89.

\bibitem{KR} M.A. Krasnosel'skii and Ya.B. Rutickii,
{\em Convex Functions and Orlicz Spaces.} P. Noordhoff,
Groningen, 1961.

\bibitem{LN} A. Laptev and Yu. Netrusov, 
On the negative eigenvalues of a class of Schr\"odinger operators. In:
V. Buslaev (ed.) et al., {\em Differential Operators and Spectral Theory. 
M. Sh. Birman's 70th anniversary collection.} Providence, RI: American Mathematical Society. Transl., Ser. 2, Am. Math. Soc. \textbf{189(41)} (1999), 173--186.

\bibitem{LS0} A. Laptev and M. Solomyak, On the negative spectrum of the two-dimensional 
Schr\"odinger operator 
with radial potential, {\it Comm. Math. Phys.} \textbf{314}, 1 (2012), 229--241.

\bibitem{LS} A. Laptev and M. Solomyak,
On spectral estimates for two-dimensional Schr\"odinger operators, {\it J. Spectr. Theory}
(to appear).

\bibitem{LeSa} B.M. Levitan and I.S. Sargsyan, 
{\em Sturm-Liouville and Dirac Operators.} Kluwer, Dordrecht etc., 1990.

\bibitem{Maz} V.G. Maz'ya, 
{\em Sobolev Spaces. With Applications to Elliptic Partial Differential Equations.} 
Springer, Berlin--Heidelberg, 2011.


\bibitem{MS} V.G. Maz'ya and T.O. Shaposhnikova, 
{\em Theory of Sobolev Multipliers. With Applications to Differential and Integral Operators.} 
Springer, Berlin--Heidelberg, 2009.

\bibitem{MV} S. Molchanov and B. Vainberg,
On negative eigenvalues of low-dimensional Schr\"odinger operators, 
(to appear, arXiv:1105.0937).

\bibitem{MV1} S. Molchanov and B. Vainberg, Bargmann type estimates of the counting function 
for general Schr\"odinger operators, {\it J. Math. Sci.} \textbf{184}, 4 (2012), 457--508.

\bibitem{Peet} J. Peetre, 
Espaces d'interpolation et th\'eor\`eme de Soboleff,
{\it Ann. Inst. Fourier} \textbf{16}, 1 (1966), 279--317.

\bibitem{RR} M.M. Rao and Z.D. Ren, 
{\em Theory of Orlicz Spaces.} 
Marcel Dekker, New York, 1991.

\bibitem{Sol} M. Solomyak,
Piecewise-polynomial approximation of functions from $H\sp \ell((0,1)\sp d)$, $2\ell=d$, and applications to the spectral theory of the Schr\"odinger operator,
{\it Isr. J. Math.}  \textbf{86}, 1-3 (1994), 253--275.

\bibitem{Sol1} M. Solomyak,
Spectral problems related to the critical exponent in the Sobolev embedding theorem,
{\it Proc. Lond. Math. Soc., III. Ser.}  \textbf{71}, 1 (1995), 53--75.

\bibitem{Sol2} M. Solomyak, 
On a class of spectral problems on the half-line and their applications to multi-dimensional problems,
{\it J. Spectr. Theory} \textbf{3}, 2 (2013), 215--235.

\bibitem{Sto} M. Stoiciu, 
An estimate for the number of bound states of the Schr\"odinger operator in two dimensions,
{\it Proc. Am. Math. Soc.} \textbf{132}, 4 (2004), 1143--1151.

\bibitem{Wei} J. Weidmann,
{\em Spectral Theory of Ordinary Differential Operators.}
Springer, Berlin etc., 1987.

\bibitem{Zie} W.P. Ziemer, 
{\em Weakly Differentiable Functions. Sobolev Spaces and Functions of Bounded Variation.}
Springer, Berlin etc., 1989.

\end{thebibliography}
\end{document}